\begin{document}

\newtheorem{definition}{Definition}[section]
\newtheorem{theorem}{Theorem}[section]
\newtheorem{example}{Example}[section] 
\newtheorem{remark}{Remark}[section] 
\newtheorem{corollary}{Corollary}[section]
\newtheorem{lemma}{Lemma}[section]
\newtheorem*{T. lemma}{Technical lemma }  
\newtheorem{proposition}{Proposition}[section]
\numberwithin{equation}{section}

\renewcommand{\thefootnote}{\fnsymbol{footnote}} 
\footnotetext{2010 Mathematics Subject Classification. Primary 35J62, 35A15, 35B38} 
\renewcommand{\thefootnote}{\arabic{footnote}}

\title{A priori bounds and multiplicity results for slightly superlinear and sublinear elliptic  $p$-Laplacian equations}

\author[,1]{Zakariya Chaouai \thanks{Corresponding author:  \texttt{zakariya.chaouai@cea.fr}}} 
\author[,1]{Mohamed Tamaazousti \thanks{\texttt{mohamed.tamaazousti@cea.fr}}}
 
\affil[1]{Université Paris-Saclay, CEA, List, F-91120, Palaiseau, France}

\date{}
\maketitle

\begin{abstract}
We consider the following problem $ -\Delta_{p}u= h(x,u) \mbox{ in }\Omega$, $u\in W^{1,p}_{0}(\Omega)$,
        where $\Omega$ is a bounded domain in $\mathbb{R}^{N}$, $1<p<N$, with a smooth boundary. In this paper we assume that $h(x,u)=a(x)f(u)+b(x)g(u)$ such that  $f$ is regularly varying of index $p-1$ and superlinear at infinity. The function $g$ is a $p$-sublinear function at zero. The coefficients $a$ and $b$ belong to $L^{k}(\Omega)$ for some $k>\frac{N}{p}$ and they are  without sign condition. Firstly, we show a priori bound on solutions, then by using variational arguments, we prove the existence of at least two nonnegative solutions. One of the main difficulties is that the nonlinearity term $h(x,u)$ does not satisfy the standard Ambrosetti and Rabinowitz condition. 
\end{abstract}


\section{Introduction} 
Let $\Omega$ be a bounded domain in $\mathbb{R}^{N}$ ($N\geq 2$) with a smooth boundary $\partial\Omega$. In this paper, we are concerned by the following boundary value problem:
$$(P) \begin{cases}                                                             
    -\Delta_{p}u= h(x,u) & \ \ \mbox{ in }\Omega,\\                                                  
    u=0 & \ \ \mbox{ on }   \partial\Omega,                                                                 
    \end{cases}$$
where $\Delta_{p}u= \mbox{div}(\vert\nabla u \vert^{p-2}\nabla u)$ is the $p$-Laplacian operator,  $1<p<N$ and $h : \Omega\times [0,\infty)\to [0,\infty)$ is a Carathéodory function that satisfies some suitable conditions. 


 
We start by stating the existing works on this elliptic problem. Then we will motivate the condition choices we made on the parameters of this problem, which culminate in the model we introduce in the next section. Indeed, in the literature, several works studied different instance of  problem $(P)$. For the case  $p=2$, in \cite{De Figueiredo3} De Figueiredo et al. proved  the existence of nontrivial solutions to problem $(P)$ in $\mathbb{R}^{2}$, where $h(x,u)$ has exponential growth. In \cite{Bartsch}, Bartsch and Wang proved the multiplicity of nontrivial solutions in the case where $h(x,u)=h(u)\in C^{1}(\Omega)$ grows superlinearly but subcritically at infinity and $h^{'}(0)<\lambda_{2}$, and where $\lambda_{2}$ is the second small eigenvalue of $-\Delta$ on $\Omega$. Rec\^{o}va and Rumbos  proved in \cite{Recova} the multiplicity of solutions in the case where $h(x,u)$ has a polynomial growth and satisfies the nonquadraticity condition introduced by Costa and Magalhães in \cite{Costa}.   In \cite{De Figueiredo}, De Figueiredo et al. showed, by using the variational and sub and super solutions methods,  the existence and the multiplicity of positive solutions in the case where $h(x,u)$ is locally superlinear and sublinear. Moreover, these results were extended to the $p$-Laplacian case by the same authors in \cite{De Figueiredo2}. Recently, De Figueiredo et al. showed in \cite{De Figueiredo4} the existence of at least two solutions in the case where $h(x,u)$ is locally $p$-sublinear at zero.   

It is worth mentioning that, in all the papers mentioned above, the authors assumed, among other conditions, that the nonlinearity term $h(x,s)$ satisfies the well known Ambrosetti-Rabinowitz (AR) condition that was introduced for the first time in \cite{AR}. The condition requires the following in the case of the $p$-Laplacian operator;
  $$\mbox{ there exist }  \; \theta >p \; \mbox{ and } \; s_{0}>0 \;  \mbox{ such that }\;   0<\theta H(x,s)\leq s h(x,s),\;   \mbox{ as } \; \forall s>s_{0}.$$
It implies the existence of two positive constants $C_{1}$ and $C_{2}$ such that 
$$ H(x,s)\geq C_{1} s^{\theta} -C_{2}, \; \; \forall s\geq 0.$$
In the literature, this condition is the main tool to prove the existence of solutions to elliptic problems with variational structure. It serves in particular to prove the boundedness of Palais-Smale sequence of the energy functional associated with such problems.  Nevertheless, this condition is somewhat restrictive and not
being satisfied by many nonlinearities. 

We can mention for instance the nonlinearity  $h(x,s)= s^{p-1}\ln(1+s)^{q}$, where $q>0$, that was taken from \cite{Chaouai,Coster22}, which instead verify that, for any  $\theta>p$, $H(x,s)/s^{\theta}\to 0$ as $s\to +\infty$. In our present setting as well, the nonlinearity  $h(x,s)$ that we are considering, does not satisfy the (AR) condition.    

Many recent research have been made to drop the (AR) condition, we refer for instance to \cite{De Figueiredo, Hsu, Iturriaga1}. In these works,  the authors studied different boundary value problems. In \cite{De Figueiredo} De Figueiredo et al. studied the problem $(P)$ in the case where $p=2$ and the nonlinearity term  $h(x,u)=  a(x) u^{q}+ b(x)u^{r}$, where the coefficients $a(x)$, $b(x)$ belong to $L^{\infty}(\Omega)$ with $a(x)$ is nonnegative and $b(x)$ is allowed to change sign.  They proved that if $a(x)$ and $\Vert b\Vert_{L^{\infty}(\Omega)} - b(x)$ are suitably small, then the problem $(P)$ has at least two positive solutions. In a related task, in \cite{Hsu} Hsu deals with the same nonlinearity term but in the $p$-Laplacian case. Moreover, he assumes that  $r=\frac{Np}{N-p}$, the coefficients $a(x)$ and $b(x)$ are continuous on $\Bar{\Omega}$ and are positive somewhere but may change sign. He proved that, if $0<q<p-1<N-1$ and  $a(x)$ is suitably small, then the problem $(P)$ has at least two positive solutions. On the other hand, in \cite{Iturriaga1} Iturriaga et al. considered more general setting. Notably, they supposed that $h(x,u)$ grows as $u^{p-1}$ near zero and has a $p$-superlinear growth at inﬁnity. They proved under other conditions that $(P)$ has at least two positive solutions. We point out, that the class of nonlinearities considered in these works belong to  pure-power or power-like cases which is a quite restrictive class of functions. Moreover, the assumptions considered on the coefficients $a(x)$ and $b(x)$ are very strong. In this paper, the nonlinearity $h(x,s)$ includes a larger class of functions. Namely, part of  our nonlinearity  belongs to the class of functions that is regularly varying of index $p-1$ (at infinity), (see section 2). This type of functions were introduced for the first time in \cite{Karamata}, (see also \cite{Seneta}).   


Recently, in the case $p=2$,  \cite{Chile,Costa2} considered the case  where the nonlinearity term  is  regularly varying. More specifically,  in \cite{Chile} Garc\'{\i}a-Meli\`an et al. studied the problem $(P)$ in the case where $h(x,u)=a(x)f(u)$, such that $f$ is  regularly varying of index $1$ and slightly superlinear and $a(x)$ may change of sign. They proved the existence of at least one positive solution. Moreover, in \cite{Costa2}  Costa et al.  proved the existence of at least one positive solution. They studied as well the bifurcation in the case where $h(x,u)=\lambda a(x)(f(u)-l)$ where $f$ is subcritical, superlinear at infinity, and regularly varying of index $q>1$,  $\lambda$ and $l$ are positive parameters. In these two papers, it was supposed  among other conditions that  the coefficient $a(x)$ is continuous on $\overline{\Omega}$, which is a strong assumption. Furthermore, in  \cite{Jeanjean2} Jeanjean and Quoirin consider the case where $h(x,u)= a(x)f(u)+ b(x)g(u)$ such that $f(u)=(1+u)\ln(1+u)$ and $g(u)=1+u$. The coefficient $a(x)$ is without sign condition and the coefficient  $b(x)$ is nonnegative  function and both belong to $L^{k}(\Omega)$ with $k>\frac{N}{2}$. They proved that if $a$ is suitably small, then the problem $(P)$ has at least two solutions. In \cite{Coster2} De Coster and Fern\'andez  proved the same results where $b$ may change of sign. Moreover, in \cite{Chaouai} Chaouai and Maatouk generalized the result given in \cite{Jeanjean2} to the $p$-Laplacian operator, where $a(x)$ is nonnegative and $b(x)$ changes its sign.  We highlight that $f(u)$ is an example of functions that is regularly varying of index 1. In addition, the arguments used by the authors in \cite{Chaouai,Coster2,Jeanjean2} to prove the multiplicity of solutions are based on the explicit form of $f$ and $g$.

Our  aim in this paper is to show the existence of at least two different nonnegative weak solutions to the problem $(P)$. It is worth mentioning that the form of our model (see section 2) is inspired by \cite{Chaouai,Cirstea0,Cirstea,Coster2,  Jeanjean2}. An important feature of our study lies in the incorporation of  potentials and weighted nonlinearities, thereby embracing classes of the stationary nonlinear Schrodinger equations. Moreover, our model corresponds in particular cases to the time-independent case of the nonlinear Hamilton-Jacobi equations studied in \cite{Galaktionov} (see Section 5). On the other hand, the improvements given in this work are as follows;  The first improvement, the nonlinearity term that we are considering belongs to a larger class of functions. Notably, it composes of two general functions; $i)$ One of them is regularly varying of index $p-1$, which is inspired by \cite{Chile}. $ii)$ Second term that we consider a general $p$-sublinear function at zero as in \cite{De Figueiredo2, De Figueiredo4}. The second improvement, lies on the fact that we assume weak conditions on the coefficients of the nonlinearity $h(x,s)$. More specifically, instead of assuming that the coefficients are continuous like in the recent works \cite{Chile,Costa2}, we are only assuming that our coefficients belong to $L^k(\Omega)$,  $k>\frac{N}{p}$, and with no sign conditions. The last but not least improvement to mention is that our problem involves the $p$-Laplacian operator, with $1<p<N$, unlike \cite{Chile,Coster2, Jeanjean2}. Furthermore, to the best of our knowledge, our results are new even in the Laplacian case i.e., $p=2$.
 
The rest of the paper is organized as follows. In Section 2, we explicitly give the model that we will deal with, then we state the assumptions and our main results. In Section 3, we recall and state some preliminary results that will be useful for the rest of the paper. Section 4 is devoted to prove of our main results. Finally, Section 5 is concerned to some applications of the main results given in Section 2.

\subsection*{Notations} 
        \begin{enumerate}
                \item [1)] The Lebesgue norm $(\int_{\Omega} |u|^{p})^{\frac{1}{p}}$ in $L^{p}(\Omega)$ is denoted by $\|.\|_{p}$ for $p\in [1,+\infty[$. The norm in $L^{\infty}(\Omega)$ is denoted by $\
\|u\|_{L^{\infty}(\Omega)}:=ess \sup_{x\in\Omega}|u(x)|$. The H\"{o}lder conjugate of $p$ is denoted by $p^{\prime}$.
                \item [2)]The spaces $W_{0}^{1,p}(\Omega)$ and $W^{-1,p'}(\Omega)$  are equipped with  Poincaré norm and the dual norm, $\|u\|:=(\int_{\Omega} |\nabla u|^{p})^{\frac{1}{p}}$ and $\|\cdot\|_{\ast}:=\|\cdot\|_{W^{-1,p'}(\Omega)}$ respectively.
                \item [3)]We denote by $B_{R}(x_{0})$ the ball of radius $R$ centered at $x_{0}$ and $\partial B_{R}(x_{0})$ its boundary. 
                \item[4)] for $u\in L^{1}(\Omega)$, we define $u^{+}= \max(u,0)$ and $u^{-}= \min(-u,0)$. 
                \item[5)] We denote by $C,C_{i}$ any positive constants which are not essential in the arguments and which may vary from one line to another.
        \end{enumerate}


\section{Models, assumptions and main results}
In this section, we will give the explicit form of our nonlinearity term $h(x,u)$ of the problem $(P)$, afterwards we state our assumptions and our main results. Next, we will  give some examples of the considered nonlinearities functions that satisfy our hypothesis in each result.
 
 In this paper, our goal is to show the existence of at least two nonnegative nontrivial solutions of the following problem 
$$(Q) \begin{cases}                                                             
    -\Delta_{p}u= a(x)f(u)+ b(x)g(u) & \ \ \mbox{ in }\Omega,\\                                                  
    u\in W_{0}^{1,p}(\Omega),                                                                  
    \end{cases}$$
where the functions $a$ and $b$ satisfy the following conditions
\begin{equation}\label{hypothse} 
\tag{$\mathcal{A}_{a,b}$}
\begin{cases}                                                             
    a = a^{+}-a^{-}, b = b^{+}-b^{-} \text{such that } a^{+}, a^{-}, b^{+} \in L^{k}(\Omega) \text{ for some } k>\frac{N}{p} \text{ and } b^{-}\in L^{\infty}(\Omega),\\                                                  
    a^{+}(x)a^{-}(x)=0 \text{ a.e. in }  \Omega,\\ 
    \Omega^{+}:= \text{supp}(a^{+}) \text{ such that }   \text{mes}(\Omega^{+})>0, \\
    \text{there exists an } \epsilon>0 \text{ such that } a^{-}=0 \text{ in } \{x\in\Omega : d(x,\Omega^{+})<\epsilon\}. 
    \end{cases} 
\end{equation}
We point out, the same assumptions have been considered in the recent paper \cite{Coster1}. We note that the condition $a^{-}=0$ in  $\{x\in\Omega : d(x,\Omega^{+})<\epsilon\}$ for some $\epsilon>0$, it's called "thick zero set" which was introduced for the first time in \cite{Alama}.    
    
Through this paper we assume that $f, g: [0,\infty)\to[0,\infty)$ to be continuous functions and satisfy the following assumptions; 
\begin{itemize}
    \item $f$ is regularly varying of index $p-1$ at infinity, i.e. 
    \begin{equation}\label{RVq}
    \tag{$\mathcal{A}^{1}_{f}$}
    \lim_{s\to +\infty}  \frac{f(\lambda s)}{f(s)}=\lambda^{p-1}, \text{ for every } \lambda>0.    
    \end{equation}
\item  $f$ is $p$-superlinear at infinity, i.e.
\begin{equation}\label{limf2}
\tag{$\mathcal{A}^{2}_{f}$}
\lim_{s\to +\infty}\frac{f(s)}{s^{p-1}}= +\infty.    
\end{equation}
\item $g$ is $p$-asymptotically linear at infinity, i.e. 
\begin{equation}\label{limg2}
\tag{$\mathcal{A}^{1}_{g}$}
\lim_{s\to +\infty} \frac{g(s)}{s^{p-1}}= l_{1}, \; \;  \text{where} \; \;   l_{1}\in [0,\infty).    
\end{equation}
\end{itemize}

For more information and developments on regularly varying of index $p-1$  functions we refer the reader to  \cite{Chile, Karamata,Seneta}.

In order to prove the multiplicity of nonnegative solutions for $(Q)$, we will use variational arguments. As in \cite{Chile} the main ingredients of our arguments are some a priori bounds on any nonnegative weak solution of a slightly more general version  of $(Q)$, which is as follow,  
$$(Q_{\lambda}) \begin{cases}                                                             
    -\Delta_{p}u= (\lambda a^{+}(x)-a^{-}(x))f(u) + b(x)g(u) & \ \ \mbox{ in }\Omega,\\                                                   
   u\in W_{0}^{1,p}(\Omega),                                                                  
    \end{cases}$$
where $\lambda\in [1/2,1]$. As mentioned in the introduction, in \cite{Chile} it was assumed that the coefficient $a(x)$ is continuous on $\Bar{\Omega}$ and $b\equiv 0$. The method that was applied is the well known  Gidas-Spruck rescaling argument which was introduced in \cite{Gidas}.  We recall that this method have been extensively used when the nonlinearities term is asymptotic to a power near $\infty$, see for instance \cite{Amann, Iturriaga, Ruiz} and references therein. In our case, we will exploit the new method used in the recent work \cite{Coster1} where the authors show a priori estimates for the special case $p=2$.  Especially, this method is based on using the boundary weak Harnack inequality.  However, to use this method we shall  extend some results to the $p$-Laplacian setting, see Section 3.


Now, let us state our first result which is concerning  a priori estimates of all nonnegative solutions of $(Q_{\lambda})$.    
\begin{theorem}[A priori bounds]\label{a priori estimate} 
 Under the assumptions \eqref{hypothse},\eqref{RVq}, \eqref{limf2},  and \eqref{limg2}, there exists a positive constant $C$ such that for any $\lambda\in [1/2, 1]$ and every nonnegative weak solution $u$ of $(Q_{\lambda})$ satisfies   $$\Vert u\Vert_{L^{\infty}(\Omega)}\leq C.$$  
\end{theorem}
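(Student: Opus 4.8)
The plan is to argue by contradiction using a blow-up (rescaling) argument à la Gidas–Spruck, but carried out in the $L^k$-coefficient setting via the boundary weak Harnack inequality, as announced after the theorem statement. Suppose the conclusion fails: then there exist $\lambda_n\in[1/2,1]$ and nonnegative weak solutions $u_n$ of $(Q_{\lambda_n})$ with $M_n:=\|u_n\|_{L^\infty(\Omega)}\to\infty$. Let $x_n\in\overline\Omega$ be (close to) a point of maximum, $M_n = u_n(x_n)$ roughly. The first step is to record the key scaling identity for $\Delta_p$: if $u$ solves $-\Delta_p u = H(x,u)$ and we set $v(y)=u(x_n+\mu_n y)/M_n$, then $-\Delta_p v(y) = \mu_n^p M_n^{1-p} H(x_n+\mu_n y, M_n v(y))$. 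Because $f$ is regularly varying of index $p-1$ with $f(s)/s^{p-1}\to\infty$, the natural choice is $\mu_n^p = M_n^{p-1}/f(M_n)\to 0$, so that $\mu_n^p M_n^{1-p} f(M_n v) = \tfrac{f(M_n v)}{f(M_n)} \to v^{p-1}$ by \eqref{RVq} (using uniform convergence on compacts of slowly varying functions, which I would quote from \cite{Seneta}); and since $g$ is $p$-asymptotically linear, $\mu_n^p M_n^{1-p} g(M_n v) = \tfrac{g(M_n v)}{M_n^{p-1}}\mu_n^p \to 0$ because $\mu_n\to 0$.

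The second step is to handle the coefficients $a^{\pm},b^{\pm}\in L^k$, $k>N/p$: after rescaling, $a^{+}(x_n+\mu_n y)$ becomes a function whose $L^k$ norm on any fixed ball scales like $\mu_n^{-N/k}$, and the prefactor $\mu_n^p$ times this is $\mu_n^{p-N/k}\to 0$ since $p - N/k > 0$. So the rescaled right-hand sides converge to zero in the relevant $L^{k}_{loc}$ sense where they are multiplied by the (bounded) $a^{-}, b$ pieces, while the $\lambda_n a^{+} f$ term either converges (after extracting $\lambda_n\to\lambda_\infty$) to a nonnegative limit or vanishes. Using local $C^{1,\alpha}$ regularity for the $p$-Laplacian (DiBenedetto/Tolksdorf), the rescaled solutions $v_n$, which satisfy $0\le v_n\le 1$ and $v_n(0)\approx 1$, converge in $C^1_{loc}$ to a nonnegative limit $V$ that solves a limiting equation: either $-\Delta_p V = c\, V^{p-1}$ on all of $\mathbb{R}^N$ (interior blow-up, $c\ge 0$) with $V(0)=1$, $0\le V\le 1$, or $-\Delta_p V = c\, V^{p-1}$ on a half-space with $V=0$ on the boundary hyperplane (boundary blow-up). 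In the interior case $c=0$ forces $V$ constant, but then $V\equiv 1$ solves $-\Delta_p V = 0$, which is fine — so I must instead rule this out by exploiting the thick-zero-set structure: the maximum points $x_n$ cannot concentrate where $a^+$ is absent unless the limiting equation is trivial, and a Liouville-type theorem (for $p$-harmonic or $-\Delta_p V = V^{p-1}$ bounded solutions on $\mathbb{R}^N$ or a half-space) gives a contradiction with $V(0)=1$.

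The main obstacle — and the step where the work of \cite{Coster1} is really needed — is controlling the location of the near-maxima and the boundary case, precisely because the coefficients are merely $L^k$ and sign-changing, so classical interior/boundary Harnack for the limiting constant-coefficient equation is not directly available along the sequence. Here I would use the boundary weak Harnack inequality (extended to $-\Delta_p$ in Section 3): it prevents $u_n$ from having a spike near $\partial\Omega$ that is not "seen" by a comparable bulk, forcing $\mu_n/\mathrm{dist}(x_n,\partial\Omega)$ to stay bounded or to go to zero in a controlled way, which pins down the limiting domain as either $\mathbb{R}^N$ or a half-space. The thick-zero-set hypothesis in \eqref{hypothse} (the condition $a^-=0$ near $\Omega^+$) is what guarantees that in the rescaled limit the "bad" term $-a^-(x_n+\mu_n y)f$ either disappears or has a definite sign, so that the limiting inequality $-\Delta_p V \ge 0$ or $\le 0$ holds and the relevant Liouville theorem (no bounded positive $p$-superharmonic function on $\mathbb{R}^N$, $1<p<N$, is nonconstant; and the half-space analogue with zero boundary data) applies to produce $V\equiv 0$, contradicting $V(0)=1$.
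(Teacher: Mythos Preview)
Your approach is fundamentally different from the paper's, and it has a genuine gap. The paper does \emph{not} rescale: it argues by contradiction on $\Omega^+$ first, uses the Brezis--Cabr\'e type estimate (Lemma~\ref{keylemma}) together with \eqref{limf2} to show $\inf_{B_R} u_n$ stays bounded, then combines the weak Harnack inequality (Lemma~\ref{whi}) with the local maximum principle (Lemma~\ref{lmp}) --- and their boundary versions (Lemmas~\ref{bwhi}, \ref{blmp}) in Case~2 --- to bound $\sup_{B_R} u_n$, yielding a contradiction directly. The step from $\Omega^+$ to $\Omega\setminus\overline{\Omega^+}$ is then done by comparison. No limiting equation is ever produced.

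The gap in your blow-up argument is the handling of the leading term $\lambda_n a^+(x_n+\mu_n y)\,\dfrac{f(M_n v_n)}{f(M_n)}$. After your choice $\mu_n^p = M_n^{p-1}/f(M_n)$ there is \emph{no} residual factor of $\mu_n^p$ in front of this term, so your $\mu_n^{p-N/k}$ scaling applies only to the $b\cdot g$ piece, not to $a^+ f$. With $a^+$ merely in $L^k$, the rescaled coefficient $\tilde a_n(y)=a^+(x_n+\mu_n y)$ has $\|\tilde a_n\|_{L^k(B_R)}=\mu_n^{-N/k}\|a^+\|_{L^k(B_{\mu_n R}(x_n))}$, and while the second factor tends to zero by absolute continuity, its rate is uncontrolled, so the product need not stay bounded --- you cannot pass to a limiting equation with a constant (or even $L^k_{\mathrm{loc}}$) coefficient. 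This is exactly why the classical Gidas--Spruck scheme in \cite{Chile} required $a$ continuous, and why the paper replaces it by the Harnack-based method of \cite{Coster1}. Secondary issues: your uniform $C^{1,\alpha}$ compactness would need a uniform $L^\infty$ (or at least $L^q$, $q>N$) bound on the rescaled right-hand side, which you do not have; and the Liouville statement ``no bounded positive $p$-superharmonic function on $\mathbb{R}^N$ is nonconstant'' is false as written (constants are $p$-superharmonic, and the inequality $-\Delta_p V\ge 0$ alone does not force rigidity).
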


Concerning the existence of the multiplicity result,  other hypothesis have to be added to the one of Theorem \ref{a priori estimate} for $f$ and $g$ which are as follow;
\begin{itemize}
\item $f$ is $p$-superlinear at zero, i.e. 
\begin{equation}\label{limf1}
\tag{$\mathcal{A}^{3}_{f}$}
\lim_{s\to 0^{+}} \frac{f(s)}{s^{p-1}}= 0,
\end{equation}
\item $g$ is $p$-sublinear at zero, i.e.  
\begin{equation}\label{limg1}
\tag{$\mathcal{A}^{2}_{g}$}
\lim_{s\to 0^{+}} \frac{g(s)}{s^{p-1}}= +\infty,
\end{equation}  
\end{itemize}
The assumption \eqref{limf2} will be used to prove the geometrical structure of the functional $I_{\lambda}$, defined in Section 4, associated to the problem $(Q_{\lambda})$ and this condition implies that $f(0)=0$. On the other hand, the assumption \eqref{limg1} will be used to prove the existence of the second weak solution. In addition, this condition implies that $g(0)\geq 0$. In our results we will distinguish between the case where $g(0)=0$ and the case $g(0)>0$.  

Due to Theorem \ref{a priori estimate} and to above assumptions, we can have the following multiplicity results of the problem $(Q)$.  
\begin{theorem}[First multiplicity result]\label{maintheo1}
Under the assumptions  \eqref{hypothse}, \eqref{RVq},\eqref{limf2}, \eqref{limf1}, \eqref{limg2} and \eqref{limg1}, we also assume that $\Vert b^{+} \Vert_{L^{k}(\Omega)}$ is suitably small. 
\begin{enumerate}
\item If $g(0)=0$, then the problem $(Q)$  has at least two nonnegative  nontrivial weak solutions in $W_{0}^{1,p}(\Omega)\cap L^{\infty}(\Omega)$. 
\item If $g(0)>0$ and $b\gneqq 0$, then the problem $(Q)$  has at least two positive weak solutions in $W_{0}^{1,p}(\Omega)\cap L^{\infty}(\Omega)$.
\end{enumerate}

\end{theorem}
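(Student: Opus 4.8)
The plan is to work with the energy functional $I_\lambda : W_0^{1,p}(\Omega) \to \mathbb{R}$ associated to $(Q_\lambda)$ (for the final result we only need $\lambda=1$, but the $\lambda$-family is what makes the a priori bound usable via a degree/truncation argument),
\[
I(u) = \frac{1}{p}\int_\Omega |\nabla u|^p \, dx - \int_\Omega a(x) F(u^+)\,dx - \int_\Omega b(x) G(u^+)\,dx,
\]
where $F(s)=\int_0^s f(t)\,dt$, $G(s)=\int_0^s g(t)\,dt$, and nonnegativity of solutions will be enforced by the usual truncation of the nonlinearity at $s=0$ (testing against $u^-$ kills the negative part since $a^-, b^-$ act with a favorable sign on the truncated problem, and the regularity theory of Section 3 gives $u\in L^\infty$). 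I would first record that $I\in C^1$ and that critical points are weak solutions of $(Q)$; the growth control needed here comes from the fact that $f$ is regularly varying of index $p-1$ and subcritical, so $F$ grows slightly faster than $s^p$ but below $s^{p^*}$, and $a,b\in L^k$ with $k>N/p$ makes the lower-order terms compact perturbations by the Sobolev embedding. Then I would produce the \emph{first solution} as a local minimizer near the origin: using \eqref{limf1} and \eqref{limg1} together with the smallness of $\|b^+\|_{L^k}$, one shows $I$ has a strict local minimum (the mountain-pass geometry at $0$: $I(u)\ge \alpha>0$ on a small sphere $\|u\|=\rho$), and — because $b^+$ is small while $g$ is $p$-sublinear at zero, contributing a term like $\|u\|^{q}$ with $q<p$ when $g(0)=0$ (resp. a linear-in-measure term when $g(0)>0$ and $b\gneqq 0$) — one finds $\inf_{\|u\|\le\rho} I(u) < 0$, so the infimum over the small ball is attained at an interior point $u_1$, a first nontrivial nonnegative solution; the strong maximum principle for the $p$-Laplacian upgrades this to $u_1>0$ in the case $g(0)>0$.

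For the \emph{second solution} I would run a mountain-pass argument over $I$ with base point $0$ and the local minimizer geometry just established: there is $e$ with $\|e\|$ large and $I(e)<I(u_1)$ — this uses \eqref{limf2}, the $p$-superlinearity of $f$ at infinity, to push $I$ to $-\infty$ along the ray $t\mapsto t\,\varphi$ for a fixed $\varphi\ge 0$ supported in $\Omega^+$ (since on $\Omega^+$ the coefficient $a^+$ is genuinely positive and $F(s)/s^p\to\infty$). This yields a Palais--Smale sequence at the mountain-pass level $c>I(u_1)$. The crux — and the main obstacle — is the \textbf{boundedness and convergence of the Palais--Smale sequence}, precisely because $h$ does \emph{not} satisfy the Ambrosetti--Rabinowitz condition, so the classical $\theta H \le sh$ trick is unavailable. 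Here I would invoke Theorem \ref{a priori estimate}: the standard device is a truncation/deformation argument showing that any PS sequence at level $c$ is, after modification, related to solutions of $(Q_\lambda)$ for $\lambda\in[1/2,1]$ whose $L^\infty$-norm — hence $W_0^{1,p}$-norm, via elliptic regularity — is uniformly bounded by the constant $C$ of Theorem \ref{a priori estimate}; alternatively, one exploits the regular variation of $f$ (a Karamata-type estimate giving $s f(s) - p F(s) \ge c\, F(s) - C$ in an averaged/integrated sense, weaker than AR but enough) to close the estimate. Once boundedness is in hand, the $(S_+)$ property of the $p$-Laplacian operator on $W_0^{1,p}(\Omega)$ together with the compactness of the $a(x)F'$ and $b(x)G'$ terms (again from $k>N/p$ and Sobolev) gives a strongly convergent subsequence, and the limit $u_2$ is a nonnegative weak solution with $I(u_2)=c>I(u_1)\ge 0$ in the $g(0)=0$ case (resp. $u_2>0$ by the strong maximum principle in the $g(0)>0$ case), so $u_2\ne u_1$ and $u_2\not\equiv 0$. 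Finally, $L^\infty$-regularity of both solutions follows from the Section 3 results since the right-hand side lies in $L^{k}$ with $k>N/p$ once $u$ is bounded in $W_0^{1,p}$, completing the proof; the delicate point throughout is that every step where AR would normally be used must instead be routed through either the a priori bound (Theorem \ref{a priori estimate}) or the regular-variation structure of $f$.
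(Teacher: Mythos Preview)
Your overall architecture --- a local minimizer at negative energy plus a mountain-pass critical point at positive energy, distinguished by their critical values --- matches the paper's, and your description of the geometry (sphere bound from \eqref{limf1}, \eqref{limg2} and smallness of $\|b^+\|_{L^k}$; the ray to $-\infty$ from \eqref{limf2} along a test function supported in $\Omega^+$; $\inf_{B_\rho}I<0$ from \eqref{limg1}) is essentially correct.

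The genuine gap is your handling of Palais--Smale boundedness. Neither mechanism you propose works. The ``Karamata-type estimate $sf(s)-pF(s)\ge cF(s)-C$'' is \emph{false} for $f$ regularly varying of index $p-1$: Karamata's theorem gives $pF(s)\sim sf(s)$, so $sf(s)-pF(s)=o(F(s))$ (e.g.\ for $f(s)=s^{p-1}\log(1+s)$ one computes $sf(s)-pF(s)\sim s^p/p$ while $F(s)\sim s^p\log s/p$). And your ``truncation/deformation argument relating PS sequences to solutions of $(Q_\lambda)$'' is not a recognized device; the a~priori bound of Theorem~\ref{a priori estimate} constrains \emph{solutions}, not arbitrary PS sequences, and you give no bridge between the two.

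What the paper actually does --- and this is precisely why the family $I_\lambda$ is introduced --- is apply Jeanjean's monotonicity trick (Theorem~\ref{theorem jeanjean} and Corollary~\ref{corollary montain pass}). One writes $I_\lambda=A-\lambda B$ with $B(u)=\int_\Omega a^+F(u)\ge 0$ and checks that $A(u)\to+\infty$ as $\|u\|\to\infty$ (this is a second place where smallness of $\|b^+\|_{L^k}$ is used). Jeanjean's abstract result then furnishes, for a.e.\ $\lambda\in[1/2,1]$, a \emph{bounded} PS sequence at level $c_\lambda$; by Lemma~\ref{convergence Palais-Smale} this yields an actual critical point $u_\lambda$. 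One then takes $\lambda_n\to 1$ and obtains a sequence of genuine nonnegative solutions $u_n$ of $(Q_{\lambda_n})$: it is \emph{these solutions} to which Theorem~\ref{a priori estimate} applies, giving a uniform $L^\infty$ bound, hence a uniform $W_0^{1,p}$ bound, and finally a strongly convergent subsequence whose limit solves $(Q)$ at level $c_1>0$. The a~priori bound is never applied to a PS sequence directly; your proposal misses this two-step structure (bounded PS for a.e.\ $\lambda$ $\Rightarrow$ solutions for a.e.\ $\lambda$ $\Rightarrow$ a~priori bound on solutions $\Rightarrow$ limit $\lambda\to 1$).
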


Various functions occurring in various works are included as models for the boundary value problem $(Q)$ as one can see from next examples.
\begin{example}
Some functions that satisfy \eqref{RVq}-\eqref{limf1} are as follow,
\begin{itemize}
\item[(i)] $f(s)= s^{p-1}(\log(1+s))^{l}$, \cite{De Figueiredo4}
\item[(ii)] $f(s)= s^{p-1}(\exp(\log(1+s)^{l})-1)$, \cite{Chile}
\end{itemize}
 where $l>0$.
\end{example}
\begin{example}\label{example g}
Some functions that satisfy \eqref{limg2} and \eqref{limg1} are as follow
\begin{itemize}
\item[(i)] $g(s)= 1+s^{p-1}$, \cite{Costa1}
\item[(ii)] $g(s)= \exp(\frac{(p-1)s}{\epsilon +s})$, \cite{Abdellaoui}
\end{itemize}
 where $\epsilon>0$. 
\end{example}
We stress that the example \ref{example g} $(ii)$ appears in combustion theory or, more generally, describes reactions of Arrhenius type in the case $p=2$, (for more details on this model, see \cite{Bebernes}).

For the next result, we will add another assumption on the coefficient $a^{+}$. However, we will make weaker assumptions on $f$ and $g$. Specifically, instead of assuming the assumptions \eqref{limf1} and \eqref{limg1}. We assume the following assumptions.
\begin{itemize}
\item There exists $r\in (0,p-1)$  such that 
\begin{equation}\label{limf3}
\tag{$\mathcal{A}^{3'}_{f}$}
\lim_{s\to 0^{+}} \frac{f(s)}{s^{r}}= l_{2} \; \;  \text{where} \; \;   l_{2}\in (0,\infty).
\end{equation}
\item $g$ is either $p$-sublinear or $p$-asymptotically linear at zero, i.e.  
\begin{equation}\label{limg3}
\tag{$\mathcal{A}^{2'}_{g}$}
\lim_{s\to 0^{+}} \frac{g(s)}{s^{p-1}}= l_{3} \; \;  \text{where} \; \;   l_{3}\in (0,\infty].
\end{equation}
\end{itemize}
 \begin{theorem}[Second multiplicity result]\label{maintheo2} 
Under the assumptions \eqref{hypothse}, \eqref{RVq}, \eqref{limf2},\eqref{limf3}, \eqref{limg2} and \eqref{limg1}, we also assume that $\Vert a^{+} \Vert_{L^{k}(\Omega)}$ and $\Vert b^{+} \Vert_{L^{k}(\Omega)}$ are suitably small. Then the conclusions of Theorem \ref{maintheo1} hold.
\end{theorem}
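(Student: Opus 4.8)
The plan is to follow the scheme of the proof of Theorem~\ref{maintheo1}, modifying only the steps in which the behaviour of $f$ and $g$ at the origin is used. First I would replace $h(x,s)$ by the truncation $a(x)f(s^{+})+b(x)g(s^{+})$ and, for $\lambda\in[1/2,1]$, consider the energy functional
$$I_{\lambda}(u)=\frac{1}{p}\int_{\Omega}|\nabla u|^{p}-\lambda\int_{\Omega}a^{+}F(u^{+})+\int_{\Omega}a^{-}F(u^{+})-\int_{\Omega}b\,G(u^{+}),\qquad F(s)=\int_{0}^{s}f(t)\,dt,\ \ G(s)=\int_{0}^{s}g(t)\,dt.$$
Since $f$ is regularly varying of index $p-1$ the primitive $F$ is subcritical, and with $g$ asymptotically linear at infinity and $a^{\pm},b^{+}\in L^{k}(\Omega)$, $b^{-}\in L^{\infty}(\Omega)$, $k>N/p$, the Sobolev embedding makes $I_{\lambda}\in C^{1}(W_{0}^{1,p}(\Omega))$; its critical points are nonnegative weak solutions of $(Q_{\lambda})$ and lie in $W_{0}^{1,p}(\Omega)\cap L^{\infty}(\Omega)$ by the regularity (Moser/De Giorgi) results recalled in Section~3. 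Writing $I_{\lambda}=A-\lambda C$ with $C(u)=\int_{\Omega}a^{+}F(u^{+})\ge0$, the smallness of $\|b^{+}\|_{L^{k}}$ makes $A$ coercive on $W_{0}^{1,p}(\Omega)$, which is the structural hypothesis required by the monotonicity trick below.

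The first solution would be produced as a negative–energy local minimizer, and this is where the new hypothesis \eqref{limf3} enters. It gives $F(s)=\frac{l_{2}}{r+1}s^{r+1}(1+o(1))$ as $s\to0^{+}$ with $r+1<p$, so the $af$–term is now of concave type at the origin; in Theorem~\ref{maintheo1} only the $bg$–term was, which is why there only $\|b^{+}\|_{L^{k}}$ had to be small, whereas here both $\|a^{+}\|_{L^{k}}$ and $\|b^{+}\|_{L^{k}}$ must be. Using $\int_{\Omega}a^{+}F(u^{+})\le C\|a^{+}\|_{L^{k}}\|u\|^{r+1}$ on a small ball (Hölder plus Sobolev, valid since $r+1<p$), the bound on $\int_{\Omega}b^{+}G(u^{+})$ coming from the behaviour of $g$ at $0$, and $\int_{\Omega}a^{-}F(u^{+})\ge0$, $\int_{\Omega}b^{-}G(u^{+})\ge0$, one gets $I_{1}(u)\ge\psi(\|u\|)$ with $\psi(t)=c_{0}t^{p}-c_{1}\|a^{+}\|_{L^{k}}t^{r+1}-(\text{lower order})$ and $c_{0}>0$ once $\|b^{+}\|_{L^{k}}$ is small; hence there is $\rho>0$ with $\inf_{\partial B_{\rho}}I_{1}\ge\psi(\rho)>0$, while testing with $t\varphi$ for a fixed $0\le\varphi\in W_{0}^{1,p}(\Omega)$ with $\int_{\Omega}a^{+}\varphi^{r+1}>0$ (possible since $\text{mes}(\Omega^{+})>0$) gives $I_{1}(t\varphi)<0$ for small $t>0$, so $\inf_{\overline{B_{\rho}}}I_{1}<0$. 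Ekeland's variational principle together with the compactness of $I_{1}$ on $\overline{B_{\rho}}$ (subcriticality and the $(S_{+})$ property of $-\Delta_{p}$) then yields $u_{1}\in B_{\rho}$ with $I_{1}(u_{1})=\inf_{\overline{B_{\rho}}}I_{1}<0$, a nonnegative local minimizer and hence a first weak solution of $(Q)$; when $g(0)>0$ and $b\gneqq0$ one has $h(x,0)\gneqq0$, so $0$ is not a solution and V\'azquez's strong maximum principle gives $u_{1}>0$.

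The second solution would come from the mountain pass theorem based at $u_{1}$. By the $p$–superlinearity at infinity \eqref{limf2} one has $I_{\lambda}(t\varphi)\to-\infty$ as $t\to+\infty$, uniformly for $\lambda\in[1/2,1]$, so one can fix $e=T\varphi$ with $\|e\|>\rho$ and $I_{\lambda}(e)<I_{\lambda}(u_{1})$; since any path in $W_{0}^{1,p}(\Omega)$ from $u_{1}\in B_{\rho}$ to $e\notin\overline{B_{\rho}}$ must meet $\partial B_{\rho}(0)$, the min–max level satisfies $c_{\lambda}\ge\psi(\rho)>0>\max\{0,I_{\lambda}(u_{1})\}$ for $\lambda$ near $1$. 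Because $h$ violates the (AR) condition, Palais–Smale sequences at level $c_{\lambda}$ need not be bounded, so one cannot conclude directly; instead I would invoke Jeanjean's monotonicity trick exactly as in \cite{Chile,Coster1}: $\lambda\mapsto c_{\lambda}$ is monotone, hence differentiable for a.e. $\lambda\in[1/2,1]$, and at such $\lambda$ there is a bounded Palais–Smale sequence for $I_{\lambda}$, giving a critical point $u_{\lambda}$ with $I_{\lambda}(u_{\lambda})=c_{\lambda}$. Letting $\lambda_{n}\uparrow1$, Theorem~\ref{a priori estimate} gives $\|u_{\lambda_{n}}\|_{L^{\infty}(\Omega)}\le C$ uniformly; testing $(Q_{\lambda_{n}})$ with $u_{\lambda_{n}}$ then bounds $\|u_{\lambda_{n}}\|$, and along a subsequence the $(S_{+})$ property of $-\Delta_{p}$ yields $u_{\lambda_{n}}\to u_{2}$ strongly in $W_{0}^{1,p}(\Omega)$, a nonnegative weak solution of $(Q)=(Q_{1})$ with $I_{1}(u_{2})=\lim c_{\lambda_{n}}\ge c_{1}>0$. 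Hence $u_{2}\neq u_{1}$ (distinct energies) and $u_{2}\neq0$ (since $I_{1}(0)=0<c_{1}$), with $u_{2}>0$ in the case $g(0)>0$, $b\gneqq0$; the regularity above places $u_{1},u_{2}$ in $W_{0}^{1,p}(\Omega)\cap L^{\infty}(\Omega)$.

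The principal obstacle is the absence of the Ambrosetti–Rabinowitz condition: it is precisely to bypass it that Theorem~\ref{a priori estimate} is established for the whole family $(Q_{\lambda})$, $\lambda\in[1/2,1]$, so that it can be coupled with the monotonicity trick, turning a non-compact min–max problem into a convergent family of genuine critical points. The secondary difficulty, peculiar to this theorem, is that \eqref{limf3} makes $f$ concave at the origin and so destroys the local minimum at $0$ available under \eqref{limf1}; this is repaired by also imposing $\|a^{+}\|_{L^{k}}$ small, which restores a sphere $\partial B_{\rho}(0)$ on which $I_{1}$ is strictly positive, and thereby both a negative-energy local minimum inside $B_{\rho}$ and a mountain-pass level strictly above it.
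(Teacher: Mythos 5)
Your proposal follows essentially the same route as the paper: the same one-parameter family $I_{\lambda}$, smallness of both $\Vert a^{+}\Vert_{L^{k}(\Omega)}$ and $\Vert b^{+}\Vert_{L^{k}(\Omega)}$ to recover $I_{\lambda}\geq \nu>0$ on $\partial B_{\rho}(0)$ despite the concave $t^{r+1}$ term produced by \eqref{limf3}, Jeanjean's monotonicity trick coupled with Theorem \ref{a priori estimate} and the strong-convergence lemma to get the mountain-pass solution at level $c_{1}>0$, and a negative-energy minimizer in $B_{\rho}(0)$ as the second solution, the two being distinguished by their energy levels. Only two minor repairs are needed: base the min--max paths at $0$ as the paper does, since the abstract result invoked (Theorem \ref{theorem jeanjean}) requires $\gamma(0)=0$ and your separation inequality $c_{\lambda}\geq\psi(\rho)>0>\max\{0,I_{\lambda}(u_{1})\}$ cannot hold as written (a maximum with $0$ is never negative, and $I_{\lambda}(u_{1})$ may be positive for $\lambda<1$ because $I_{\lambda}\geq I_{1}$); and take the test function $\varphi$ supported in a small ball inside $\Omega^{+}$ rather than merely satisfying $\int_{\Omega}a^{+}\varphi^{r+1}>0$, since otherwise $\int_{\Omega}a^{-}F(t\varphi)$ is also of order $t^{r+1}$ and could cancel the negative contribution needed to get $I_{1}(t\varphi)<0$ for small $t$.
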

\begin{example}\label{last f}
Some functions that satisfy \eqref{RVq}, \eqref{limf2} and \eqref{limf3} are as follow,
\begin{itemize}
\item[(i)] $f(s)= (1+s)^{p-1}(\log(1+s))^{l}$,  \cite{Chaouai}
\item[(ii)] $f(s)=(1+s)^{p-1}(\log(1+\log(1+s))^{l}$, \cite{Chile}
\end{itemize}
 where $l>0$.
\end{example}
We point out that the above examples, satisfy the property \eqref{limf3}, for both cases, when $r=l$.  
\begin{example}\label{last f}
Some functions that satisfy \eqref{limg2},  \eqref{limg3} are as follow,
\begin{itemize}
\item[(i)] $g(s)= s^{q}$, \cite{Dinca}  
\item[(ii)] $g(s)=\frac{s^{q}}{(s+\epsilon)^{q+\alpha}} $, \cite{Diaz} 
\end{itemize}
where $0\leq q\leq p-1$ and $\epsilon, \alpha>0$.
\end{example}
\begin{remark}
 If $u$ is a weak solution of $(Q_{\lambda})$ then from Lemma \ref{regul} below, $u\in C^{0,\alpha}(\Bar{\Omega})$ for some $\alpha\in (0,1)$. Beside, if the coefficients $a$ and $b$ belong to $L^{\infty}(\Omega)$ we can use   {\cite[Theorem 1]{Lieberman}} and we deduce directly from Theorem \ref{maintheo1}, or Theorem \ref{maintheo2}, that $u\in C^{1,\alpha}(\Bar{\Omega})$ for some $\alpha\in (0,1)$. 
 \end{remark} 
 \begin{remark}
In the assumption \eqref{limg3} if $l_{3}<+\infty$, then for $s$ small enough we obtain that $g(s)<C s^{p-1}$ where $C$ is a positive constant. By the same arguing used in Lemma \ref{positivity} below, we deduce directly that any nonnegative nontrivial weak solution of $(Q)$ is positive.
\end{remark}
\begin{remark}
The Theorem \ref{maintheo2} is still satisfied even if $r=p-1$ in the assumption \eqref{limf3} but only in the case where $g$ is $p$-sublinear at zero, namely $l_{3}=+\infty$, (see the proof of Theorem \ref{maintheo2}). In our application, see Subsection $5.2$, this case will be included. 
\end{remark} 

\section{Preliminary results}  
In this section, we present some definitions and  results which will play a important role throughout this work. We note that from now we extend $f$ and $g$ for $s<0$ by putting $f(s)=0$ and $g(s)=g(0)\geq 0$. 

Let us begin with the following result which concerns the functions that are regularly varying of index $p-1$. This result will be useful for our proofs.   
\begin{proposition}\label{prop RV}
We assume that $f$ is  continuous on $[0,\infty)$ satisfies \eqref{RVq}, then for every $\delta>0$, there exists a positive  constant $C$ such that
    $$ 
    f(s)\leq C(1+s^{p-1+\delta}), \; \;     s\in [0, \infty)
    $$
    where $C$ depends on $\delta$.
\end{proposition}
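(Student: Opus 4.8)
The plan is to invoke the Karamata representation theorem for regularly varying functions and convert it into the claimed polynomial upper bound. Write $f(s) = s^{p-1}L(s)$ for $s \geq 1$, where $L$ is defined by $L(s) := f(s)/s^{p-1}$. The assumption \eqref{RVq} says precisely that $L$ is slowly varying at infinity, i.e. $\lim_{s\to\infty} L(\lambda s)/L(s) = 1$ for every $\lambda > 0$. The key classical fact I would use is the consequence of the uniform convergence theorem / Potter's bounds for slowly varying functions: for every $\delta > 0$ there exist constants $C_\delta > 0$ and $s_\delta \geq 1$ such that $L(s) \leq C_\delta s^{\delta}$ for all $s \geq s_\delta$. (This is a standard result in \cite{Seneta,Karamata}; it follows from the representation $L(s) = c(s)\exp\big(\int_{1}^{s} \frac{\varepsilon(t)}{t}\,dt\big)$ with $c(s)\to c>0$ and $\varepsilon(t)\to 0$, by estimating the exponent once $|\varepsilon(t)| < \delta$.)

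From there the argument is short. First, on the bounded interval $[0, s_\delta]$ the function $f$ is continuous, hence bounded, say $f(s) \leq M_\delta$ for all $s \in [0, s_\delta]$; this is absorbed into the additive constant. Second, for $s \geq s_\delta$ we have $f(s) = s^{p-1}L(s) \leq C_\delta s^{p-1+\delta}$. Combining the two ranges, $f(s) \leq M_\delta + C_\delta s^{p-1+\delta} \leq C(1 + s^{p-1+\delta})$ for all $s \in [0,\infty)$ with $C = \max\{M_\delta, C_\delta\}$, which depends on $\delta$ as asserted.

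The only genuine obstacle is justifying the slowly-varying bound $L(s) \leq C_\delta s^{\delta}$ — everything else is routine. Since the paper is content to cite \cite{Karamata,Seneta} for the theory of regularly varying functions, I would simply quote Potter's theorem (or the elementary representation-theorem estimate) rather than reprove it; if a self-contained argument were wanted, one would note that \eqref{RVq} plus continuity forces local boundedness of $L$ away from $0$, then use the standard fact that slow variation upgrades the pointwise limit to one that is uniform on compact $\lambda$-intervals, iterate over dyadic blocks $[2^n, 2^{n+1}]$ to get geometric-in-$n$ control, and sum. That uniformity step is the technical heart; with it, the polynomial bound drops out immediately.
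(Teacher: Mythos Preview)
Your argument is correct and is in fact more detailed than what the paper provides: the paper's own proof of this proposition consists of a single sentence citing \cite{Chile} for the case $p=2$ and asserting that the extension to general $p>1$ is immediate. Your proof via the slowly-varying decomposition $f(s)=s^{p-1}L(s)$ and Potter's bound $L(s)\le C_\delta s^\delta$ is exactly the standard mechanism behind the cited result, so there is no substantive difference in approach---you have simply unpacked what the paper leaves to the reference.
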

\begin{proof}
This property was done in \cite{Chile} in the case $p=2$, but it can be easily extended to $p>1$.
\end{proof}
In the following result we will give some properties of  weak solutions of the problem $(Q_{\lambda})$. 
\begin{lemma}\label{regul}
Under the assumptions \eqref{hypothse}, \eqref{RVq}, \eqref{limg2}, we have
\begin{itemize}
\item[(i)]  Every weak solution of $(Q_{\lambda})$ belongs to $L^{\infty}(\Omega)$.
\item[(ii)] Every weak solution of $(Q_{\lambda})$ belongs to $C^{0,\alpha}(\Bar{\Omega})$ for some $\alpha\in (0,1)$.
\end{itemize}
\end{lemma}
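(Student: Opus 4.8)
The plan is to deduce both assertions from standard regularity theory for the $p$-Laplacian, once the right-hand side of $(Q_{\lambda})$ has been suitably dominated. First I would record pointwise bounds on the nonlinearities: by Proposition \ref{prop RV}, for every $\delta>0$ there is $C_{\delta}>0$ with $f(s)\le C_{\delta}(1+s^{p-1+\delta})$ on $[0,\infty)$, and after the extension $f(s)=0$ for $s<0$ this holds for all real arguments with $|u|$ in place of $s$; likewise \eqref{limg2}, the continuity of $g$ and the extension $g(s)=g(0)$ give $g(s)\le C(1+|s|^{p-1})$ on $\mathbb{R}$. Setting $W:=\lambda a^{+}+a^{-}+b^{+}+\|b^{-}\|_{L^{\infty}(\Omega)}$, which lies in $L^{k}(\Omega)$ with $k>N/p$ by \eqref{hypothse}, any weak solution $u$ of $(Q_{\lambda})$ then satisfies $-\Delta_{p}u=H$ with $|H|\le W(x)(1+|u|^{p-1+\delta})$ a.e.\ in $\Omega$.

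For (i), since $u\in W_{0}^{1,p}(\Omega)\hookrightarrow L^{p^{*}}(\Omega)$ with $p^{*}=Np/(N-p)$, I would write $W|u|^{p-1+\delta}=(W|u|^{\delta})\,|u|^{p-1}$ and note, via Hölder's inequality, that $W|u|^{\delta}\in L^{m_{0}}(\Omega)$ with $\tfrac{1}{m_{0}}=\tfrac{1}{k}+\tfrac{\delta}{p^{*}}$; because $k>N/p$ is a strict inequality, choosing $\delta$ small keeps $m_{0}>N/p$. Thus $u$ solves $-\Delta_{p}u=H$ with $|H|\le V(x)(1+|u|^{p-1})$ for some $V\in L^{m}(\Omega)$, $m:=\min(k,m_{0})>N/p$, and I would then run the Moser/De Giorgi iteration: test the equation with truncated powers of $u$, bound the left-hand side below by Sobolev's inequality applied to $v_{M}:=u\,u_{M}^{\beta}$ (with $u_{M}=\min(|u|,M)$), bound the right-hand side above by Hölder using $V\in L^{m}$, and iterate the resulting reverse-Hölder inequalities over exponents tending to $+\infty$. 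The delicate point is the term carrying $|u|^{p-1}$, which scales exactly like the test function and thus behaves like a critical term; this is handled by the Brezis--Kato truncation trick, splitting $V=V\mathbf{1}_{\{|u|\le A\}}+V\mathbf{1}_{\{|u|>A\}}$ and using absolute continuity of the integral to make the tail arbitrarily small for $A$ large — which is precisely where $m>N/p$ is used. The iteration closes, giving $u\in L^{q}(\Omega)$ for all $q<\infty$ and then $u\in L^{\infty}(\Omega)$; alternatively one invokes the $L^{\infty}$ bound of \cite{Coster1}, extended to the $p$-Laplacian as announced in Section 3.

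For (ii), with $u\in L^{\infty}(\Omega)$ in hand the continuity of $f$ and $g$ gives $f(u),g(u)\in L^{\infty}(\Omega)$, so $-\Delta_{p}u=(\lambda a^{+}-a^{-})f(u)+(b^{+}-b^{-})g(u)\in L^{k}(\Omega)$ with $k>N/p$, since $a^{\pm},b^{+}\in L^{k}(\Omega)$ and $b^{-}\in L^{\infty}(\Omega)\subset L^{k}(\Omega)$. Interior Hölder continuity, $u\in C^{0,\alpha}_{\mathrm{loc}}(\Omega)$ for some $\alpha\in(0,1)$, follows from the regularity theory for quasilinear equations with right-hand side in $L^{k}$, $k>N/p$ (DiBenedetto, Tolksdorf), and since $\partial\Omega$ is smooth and $u=0$ on $\partial\Omega$, the boundary estimate of \cite{Lieberman} upgrades this to $u\in C^{0,\alpha}(\overline{\Omega})$. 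I expect the only real obstacle to be the bootstrap in (i): making the iteration close for the $p$-Laplacian with coefficients merely in $L^{k}$, $k>N/p$, and slightly superlinear $f$, which rests on the choice of $\delta$ ensuring $m_{0}>N/p$ and on the truncation argument absorbing the critical part; step (ii) is then essentially a reference once the $L^{\infty}$ bound is established.
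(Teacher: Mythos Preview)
Your proposal is correct and follows essentially the same approach as the paper: bound the nonlinearities via Proposition~\ref{prop RV} (for $f$) and \eqref{limg2} plus continuity (for $g$), then invoke standard regularity theory for the $p$-Laplacian with right-hand side controlled by an $L^{k}$ weight, $k>N/p$. The only difference is one of presentation: the paper simply cites {\cite[Theorem~IV-7.1]{Ladyzhenskaya}} (or {\cite[Theorem~2.4]{Pucci}}) for (i) and {\cite[Theorem~IV-2.2]{Ladyzhenskaya}} for (ii), whereas you sketch the Moser/Brezis--Kato iteration that underlies those results; your aside that one could instead extend the $L^{\infty}$ bound of \cite{Coster1} is not something the paper actually does in Section~3, so if you keep that sentence it should be dropped or replaced by a direct citation to \cite{Ladyzhenskaya} or \cite{Pucci}.
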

\begin{proof}
\begin{itemize} 
\item[(i)] We have from Proposition \ref{prop RV} that, for every $\delta>0$ there exist a positive constant $C_{1}$ such that 
$$f(s)\leq C_{1}(1+s^{p-1+\delta}), \; \;     s\in [0, \infty)
    $$
Since $g$ is continuous on $[0,\infty)$, thus from \eqref{limg2} we deduce that,  there exist positive constant $C_{2}$ such that 
$$g(s)\leq C_{2}(1+s^{p-1}).$$
Hence, by using {\cite[Theorem IV-7.1]{Ladyzhenskaya}} (or {\cite[Theorem 2.4]{Pucci}} ) we deduce the result. 
\item[(ii)]By using (i) its follows directly from {\cite[Theorem IV-2.2]{Ladyzhenskaya}}.
\end{itemize}
\end{proof}
Now, let us state the definition of super-solution and sub-solution of the following general  boundary value problem
\begin{equation}\label{probgen}
 \begin{cases}                                                             
    -\Delta_{p}u+ F(x,u)=h(x)  & \ \ \mbox{ in }\Omega,\\                                                  
   u\in W_{0}^{1,p}(\Omega),                                                             
    \end{cases} 
\end{equation}    
such that $h\in L^{1}(\Omega)$ and $F: \Omega\times\mathbb{R}\to \mathbb{R}$ is a Caratheodory function. 
\begin{definition}
We say that $u\in W^{1,p}(\Omega)$ is a weak super-(sub-)solution of \eqref{probgen} if for all $\varphi\in W_{0}^{1,p}(\Omega)$ with $\varphi\geq 0$, we have :
$$\int_{\Omega} \vert\nabla u \vert^{p-2}\nabla u\nabla \varphi + \int_{\Omega} F(x,u)\varphi dx \geq (\leq) \int_{\Omega} h(x)\varphi,$$
$$u\geq ( \leq ) \; 0, \; \;  \mbox{on } \partial\Omega.$$
A function $u$ is a weak solution of \eqref{probgen} if it is a super-solution and a sub-solution.
\end{definition}
Next, we state the following comparison principle which will have an important role for the proof of our auxiliaries lemmas and results. Let us consider the following boundary value problem  
\begin{equation}\label{probpartbis}
 \begin{cases}                                                             
    -\Delta_{p}u+ c(x) \vert u\vert^{p-2}u=0  & \ \ \mbox{ in }\Omega,\\                                                  
    u=0 & \ \ \mbox{ on }   \partial\Omega,                                                                 
    \end{cases}   
\end{equation}   
\begin{lemma}\label{weak comparaison lemma}
Let $u,v\in W^{1,p}(\Omega)$ be a super-solution and a sub-solution of \eqref{probpartbis} respectively. We assume $c\in L^{\infty}(\Omega)$ is a nonnegative function. Then we have $v\leq u$
\end{lemma}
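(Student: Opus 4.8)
The plan is to prove the comparison principle of Lemma~\ref{weak comparaison lemma} by the standard technique of testing the (sub-/super-solution) inequalities against the positive part of the difference $(v-u)^{+}$ and exploiting the strict monotonicity of the $p$-Laplacian operator together with the sign of the zero-order term. First I would set $w := (v-u)^{+} \in W_{0}^{1,p}(\Omega)$: this is a legitimate nonnegative test function because $v \leq u = 0$ on $\partial\Omega$ forces $(v-u)^{+}$ to vanish on the boundary in the trace sense, and $W_{0}^{1,p}(\Omega)$ is a lattice. Note $\nabla w = \nabla(v-u)$ on the set $\{v>u\}$ and $\nabla w = 0$ a.e.\ elsewhere.

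Next I would write the weak super-solution inequality for $u$ and the weak sub-solution inequality for $v$, both tested with $\varphi = w \geq 0$, and subtract. Since $h(x)\equiv 0$ here, this yields
\begin{equation*}
\int_{\Omega} \bigl(\vert\nabla v\vert^{p-2}\nabla v - \vert\nabla u\vert^{p-2}\nabla u\bigr)\cdot \nabla w \, dx + \int_{\Omega} c(x)\bigl(\vert v\vert^{p-2}v - \vert u\vert^{p-2}u\bigr) w \, dx \leq 0.
\end{equation*}
Restricting all integrals to $\{v>u\}$ (where $w>0$; outside this set every integrand vanishes), the first integral becomes $\int_{\{v>u\}} \bigl(\vert\nabla v\vert^{p-2}\nabla v - \vert\nabla u\vert^{p-2}\nabla u\bigr)\cdot(\nabla v - \nabla u)\, dx$, which is nonnegative by the well-known monotonicity inequality for the map $\xi \mapsto \vert\xi\vert^{p-2}\xi$ on $\mathbb{R}^{N}$ (and strictly positive unless $\nabla v = \nabla u$ a.e.\ there). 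For the zero-order term, on $\{v>u\}$ we have $\vert v\vert^{p-2}v - \vert u\vert^{p-2}u \geq 0$ because $t\mapsto \vert t\vert^{p-2}t$ is nondecreasing on $\mathbb{R}$, and $c(x)\geq 0$ by hypothesis, so the second integrand is $\geq 0$ as well. Hence both integrals are simultaneously $\leq 0$ and $\geq 0$, forcing them to vanish.

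From the vanishing of the first integral and the strict monotonicity of the $p$-Laplacian I would conclude $\nabla v = \nabla u$ a.e.\ on $\{v>u\}$, i.e.\ $\nabla w = 0$ a.e.\ on $\Omega$; since $w\in W_{0}^{1,p}(\Omega)$, Poincaré's inequality gives $w\equiv 0$, that is $(v-u)^{+}=0$, which is exactly $v\leq u$ a.e.\ in $\Omega$. The only genuinely delicate point is the admissibility of $w=(v-u)^{+}$ as a test function: $u,v$ are only in $W^{1,p}(\Omega)$, not $W_{0}^{1,p}(\Omega)$, so I must use the boundary conditions $v\leq 0 \leq u$ on $\partial\Omega$ in the appropriate weak/trace sense to ensure $(v-u)^{+}\in W_{0}^{1,p}(\Omega)$ (this uses that $(\cdot)^{+}$ maps $W^{1,p}$ functions with nonpositive trace into $W_{0}^{1,p}$). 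Everything else is the routine monotonicity argument; I expect this boundary-trace bookkeeping to be the main obstacle, and it is handled by a standard truncation/approximation lemma for Sobolev functions.
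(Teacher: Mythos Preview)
Your argument is correct: testing with $(v-u)^{+}\in W_{0}^{1,p}(\Omega)$, subtracting the two variational inequalities, and invoking the strict monotonicity of $\xi\mapsto |\xi|^{p-2}\xi$ together with $c\geq 0$ and the monotonicity of $t\mapsto |t|^{p-2}t$ is the standard proof of this comparison principle, and you have identified the only genuinely delicate step (the admissibility of $(v-u)^{+}$ as a test function via the trace/boundary inequalities).

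For comparison, the paper does not give an independent proof at all: it simply cites \cite[Lemma~3.1]{Tolksdorf}. The argument in Tolksdorf's paper is essentially the one you wrote out, so your proposal is not a different route but rather an explicit unpacking of the cited result.
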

\begin{proof}
 See {\cite[Lemma 3.1]{Tolksdorf}}
\end{proof} 

Now, we consider the following boundary value problem.
\begin{equation}\label{probpart}
 \begin{cases}                                                             
    -\Delta_{p}u+ c(x) \vert u\vert^{p-2}u=h(x)  & \ \ \mbox{ in }\Omega,\\                                                  
    u=0 & \ \ \mbox{ on }   \partial\Omega,                                                                 
    \end{cases}
\end{equation} 
with $u \in W_{0}^{1,p}(\Omega)$. Here we suppose  the coefficients $c(x)$ and $h(x)$ satisfy the following assumptions 
\begin{equation}\label{partcond}
    c, \;  h \in L^{k}(\Omega) \text{ for some } k>\frac{N}{p}.
\end{equation} 
We point out, from {\cite[Theorem 13]{Dinca}}, that the problem \eqref{probpart} has at least one solution. In the next lemmas we state the local maximum principale  and the boundary local maximum principale, which we will be needed in the next section. 
\begin{lemma}\label{lmp} 
Under the assumption \eqref{partcond}. Let $u\in W^{1,p}(\Omega)$ be a sub-solution of \eqref{probpart}. For any ball $B_{2R}(x_{0})\subset\Omega$ and any $q\in(0, p]$, there exists $C= C(k,q,p,N,R, \Vert c \Vert_{L^{k}(B_{2R}(x_{0}))})>0$ such that
$$\sup_{B_{R}(x_{0})} u^{+} \leq C \Biggr[\left(\int_{B_{2R}(x_{0})}(u^{+})^{q}\right)^{\frac{1}{q}} + \Vert h^{+} \Vert^{\frac{1}{p-1}}_{L^{k}(B_{2R}(x_{0}))}\Biggr].$$
\end{lemma}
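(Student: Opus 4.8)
\textbf{Proof proposal for Lemma \ref{lmp} (local maximum principle).}

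The plan is to follow the classical De Giorgi--Nash--Moser iteration adapted to the $p$-Laplacian, using a Moser-type iteration on powers of $u^+$. First I would truncate: set $w = (u-\ell)^+$ for a level $\ell\geq 0$ and, for a cutoff $\eta\in C_c^\infty(B_{2R}(x_0))$ with $0\leq\eta\leq 1$, test the weak sub-solution inequality for \eqref{probpart} with the admissible test function $\varphi = \eta^p\, \bar w^{p(\beta-1)+1}$, where $\bar w = \min(w, M)$ is a further truncation at height $M$ (to keep $\varphi\in W_0^{1,p}$) and $\beta\geq 1$. This yields, after expanding $\nabla\varphi$ and using Young's inequality to absorb the term containing $\nabla\eta$ into the good gradient term $\int \eta^p |\nabla \bar w|^p \bar w^{p(\beta-1)}$, an estimate of the form
$$
\int_{B_{2R}} \left|\nabla\bigl(\eta\, \bar w^{\beta}\bigr)\right|^p \leq C\beta^p \int_{B_{2R}} |\nabla\eta|^p \bar w^{p\beta} + C\beta^p\int_{B_{2R}} \eta^p\bigl(|c|\,(u^+)^{p-1} + h^+\bigr)\bar w^{p(\beta-1)+1}.
$$
The first obstacle is handling the lower-order terms $|c|(u^+)^{p-1}$ and $h^+$ with $c,h$ only in $L^k(\Omega)$, $k>N/p$, rather than $L^\infty$: here I would use Hölder's inequality with exponents tuned to $k$, then the Sobolev embedding $W^{1,p}\hookrightarrow L^{p^*}$ with $p^*=Np/(N-p)$, to bound $\int |c|\eta^p\bar w^{p\beta}$ by $\|c\|_{L^k}\,\|\eta\bar w^\beta\|_{L^{p^*}}^{p}\cdot(\text{small exponent factor})$; since $k>N/p$ exactly guarantees the dual exponent $p^* \cdot$ (something) is admissible, a fraction $\varepsilon$ of the resulting $\|\eta\bar w^\beta\|_{p^*}^p$ can be absorbed into the left-hand side via Sobolev. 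The $h^+$ term is treated by splitting $\bar w^{p(\beta-1)+1}$ via Young into a power matching $\bar w^{p\beta}$ plus a constant times a power of $\|h^+\|_{L^k}$, which is where the additive $\|h^+\|_{L^k}^{1/(p-1)}$ in the conclusion originates.

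Having obtained a reverse-Hölder / Caccioppoli inequality of the form $\|\eta\bar w^\beta\|_{L^{p^*}}^p \leq C\beta^p (1+\|c\|_{L^k})\,R^{-p}\int_{B_{2R}}\bar w^{p\beta} + (\text{terms with } \|h^+\|_{L^k})$, I would let $M\to\infty$ (monotone convergence, legitimate once the right side is controlled independent of $M$) to replace $\bar w$ by $w=(u-\ell)^+$. Then iterate: with $\chi = p^*/p = N/(N-p)>1$, radii $R_j = R(1+2^{-j})$ decreasing to $R$, and exponents $\beta_j = \chi^j$, one gets
$$
\left(\fint_{B_{R_{j+1}}} (u^+)^{p\beta_{j+1}}\right)^{1/(p\beta_{j+1})} \leq \bigl(C^j b^{\chi^j}\bigr)^{1/\beta_j}\left(\fint_{B_{R_j}} (u^+)^{p\beta_j}\right)^{1/(p\beta_j)} + (\text{the } \|h^+\| \text{ contribution}),
$$
and since $\sum j/\chi^j$ and $\sum \chi^j/\chi^j$ converge, the product of constants telescopes to a finite bound, giving $\sup_{B_R} u^+ \leq C\bigl[(\fint_{B_{2R}}(u^+)^p)^{1/p} + \|h^+\|_{L^k}^{1/(p-1)}\bigr]$. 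This proves the claim for the exponent $q=p$. Finally, to get the stated range $q\in(0,p]$, I would invoke the standard interpolation trick (as in Gilbarg--Trudinger, Theorem 8.17, or Han--Lin): from the $q=p$ estimate one deduces, for any $q\in(0,p)$, the estimate with $(\fint (u^+)^q)^{1/q}$ on the right via a Young-inequality argument on $\sup_{B_R}u^+ \leq \tfrac12\sup_{B_{R'}}u^+ + C(\dots)(\fint(u^+)^q)^{1/q}\cdot(\sup_{B_{R'}}u^+)^{1-q/p} + \dots$, combined with the iteration lemma on nondecreasing functions to absorb the $\sup$ term. The main obstacle throughout is the careful bookkeeping needed so that all constants depend only on $k,q,p,N,R$ and $\|c\|_{L^k(B_{2R}(x_0))}$ — in particular keeping the $\beta$-dependence polynomial ($\beta^p$) so the iteration converges, and exploiting $k>N/p$ precisely to make the absorption into the Sobolev term possible.
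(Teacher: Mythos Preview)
Your Moser-iteration sketch is the standard route to this local maximum principle and is essentially correct; the key points---testing with $\eta^p\bar w^{p(\beta-1)+1}$, using $k>N/p$ so that the H\"older/Sobolev interpolation lets the $|c|$-term be absorbed, tracking the additive $\|h^+\|_{L^k}^{1/(p-1)}$ through the iteration, and then lowering the exponent from $p$ to any $q\in(0,p]$ via the usual absorption lemma---are all in order. Note, however, that the paper does not actually prove this lemma: its entire proof is the one-line citation ``See \cite[Corollary~3.10]{Maly}'', i.e.\ the Mal\'y--Ziemer local maximum principle for quasilinear equations with $L^k$ coefficients. So there is nothing in the paper to compare your argument against; what you have written is (a sketch of) the proof that the cited reference carries out in full, and it is more than the paper itself provides.
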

\begin{proof}
 See {\cite[Corollary 3.10]{Maly}}.
\end{proof}
\begin{lemma}\label{blmp} 
Under the assumption \eqref{partcond}. Let $u\in W^{1,p}(\Omega)$ be a sub-solution of \eqref{probpart} and let $x_{0}\in \partial\Omega$. For any $R>0$  any $q\in(0, p]$, there exists  $C= C(k,q,p,N,R,\Omega ,\Vert c \Vert_{L^{k}(B_{2R}(x_{0})\cap\Omega)})>0$ such that  
$$\sup_{B_{R}(x_{0})\cap\Omega} u^{+} \leq C \Biggr[\left(\int_{B_{2R}(x_{0})\cap\Omega}(u^{+})^{q}\right)^{\frac{1}{q}} + \Vert h^{+} \Vert^{\frac{1}{p-1}}_{L^{k}(B_{2R}(x_{0})\cap\Omega)}\Biggr].$$
\end{lemma}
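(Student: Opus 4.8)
The plan is to reduce Lemma~\ref{blmp} to the interior estimate of Lemma~\ref{lmp} by extending the sub-solution by zero across $\partial\Omega$, so that $x_0$ becomes an interior point of a larger domain. First I would pass from $u$ to $u^{+}$. Since $u$ is a sub-solution of \eqref{probpart} in the sense of the definition above, the boundary condition $u\le 0$ on $\partial\Omega$ means precisely that $w:=u^{+}\in W_{0}^{1,p}(\Omega)$. A routine truncation argument — testing the weak inequality for $u$ with $\varphi\,\min(u^{+}/\varepsilon,1)$ for $0\le\varphi\in W_{0}^{1,p}(\Omega)$, letting $\varepsilon\to 0^{+}$, and using that $|\nabla u|^{p-2}\nabla u=0$ a.e.\ on $\{u=0\}$ and that $h\,\chi_{\{u>0\}}\le h^{+}$ — shows that $w$ is a nonnegative weak sub-solution of $-\Delta_{p}w+c\,|w|^{p-2}w=h^{+}$ in $\Omega$.

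Next comes the crucial step. Fix a bounded open set $\Omega'$ (for instance a large ball) with $\overline{\Omega}\cup B_{2R}(x_{0})\subset\Omega'$, and let $\bar w,\bar c,\overline{h^{+}}$ denote the extensions of $w,c,h^{+}$ by $0$ to $\Omega'$; since $w\in W_{0}^{1,p}(\Omega)$ we have $\bar w\in W^{1,p}(\Omega')$ with $\bar w\ge 0$. I would show that $\bar w$ is a nonnegative weak sub-solution of $-\Delta_{p}v+\bar c\,|v|^{p-2}v=\overline{h^{+}}$ in $\Omega'$, which is of the form \eqref{probpart} with $\bar c,\overline{h^{+}}\in L^{k}(\Omega')$, $k>\tfrac{N}{p}$. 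To this end, fix $0\le\psi\in W_{0}^{1,p}(\Omega')\cap L^{\infty}(\Omega')$ and put $\eta_{\varepsilon}:=\min(\bar w/\varepsilon,1)$; on $\Omega$ one has $0\le\eta_{\varepsilon}\le\varepsilon^{-1}u^{+}$, so $\eta_{\varepsilon}\in W_{0}^{1,p}(\Omega)$, hence $\psi\eta_{\varepsilon}\in W_{0}^{1,p}(\Omega)$ is a legitimate nonnegative test function for $w$ (and $\psi\eta_{\varepsilon}\equiv 0$ on $\Omega'\setminus\Omega$). Inserting it, the extra term $\int_{\Omega}|\nabla w|^{p-2}\nabla w\cdot\psi\,\nabla\eta_{\varepsilon}=\varepsilon^{-1}\int_{\{0<w<\varepsilon\}}|\nabla w|^{p}\psi\ge 0$ may be discarded; letting $\varepsilon\to 0^{+}$, so that $\eta_{\varepsilon}\to\chi_{\{w>0\}}$, and using $\nabla w=0$ a.e.\ on $\{w=0\}$, the dominated convergence theorem (the zeroth-order term is integrable by Hölder's inequality together with $W_{0}^{1,p}(\Omega)\hookrightarrow L^{p^{*}}(\Omega)$, which is available since $k>\tfrac{N}{p}$), and $\overline{h^{+}}\ge 0$, one arrives at the asserted weak inequality on $\Omega'$. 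The restriction $\psi\in L^{\infty}$ is then removed by truncation and density.

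Finally I would apply Lemma~\ref{lmp} to $\bar w$ on the domain $\Omega'$, with the ball $B_{2R}(x_{0})\subset\Omega'$ and any $q\in(0,p]$, obtaining
$$\sup_{B_{R}(x_{0})}\bar w\ \le\ C\Bigl[\Bigl(\int_{B_{2R}(x_{0})}\bar w^{\,q}\Bigr)^{1/q}+\bigl\|\overline{h^{+}}\bigr\|_{L^{k}(B_{2R}(x_{0}))}^{1/(p-1)}\Bigr],$$
with $C$ depending only on $k,q,p,N,R$ and $\|\bar c\|_{L^{k}(B_{2R}(x_{0}))}$. Since $\bar w=u^{+}$, $\overline{h^{+}}=h^{+}$, $\bar c=c$ on $\Omega$ while all of $\bar w,\overline{h^{+}},\bar c$ vanish on $\Omega'\setminus\Omega$, we get $\sup_{B_{R}(x_{0})}\bar w=\sup_{B_{R}(x_{0})\cap\Omega}u^{+}$, $\int_{B_{2R}(x_{0})}\bar w^{\,q}=\int_{B_{2R}(x_{0})\cap\Omega}(u^{+})^{q}$, $\|\overline{h^{+}}\|_{L^{k}(B_{2R}(x_{0}))}=\|h^{+}\|_{L^{k}(B_{2R}(x_{0})\cap\Omega)}$ and $\|\bar c\|_{L^{k}(B_{2R}(x_{0}))}=\|c\|_{L^{k}(B_{2R}(x_{0})\cap\Omega)}$, so the displayed inequality becomes exactly the assertion of the lemma (the dependence of $C$ on $\Omega$ that the statement permits is actually not needed).

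I expect the only genuine obstacle to be the extension step, i.e.\ verifying that the zero-extension stays a weak sub-solution across $\partial\Omega$ — equivalently, manufacturing admissible test functions in $W_{0}^{1,p}(\Omega)$ out of arbitrary ones on $\Omega'$. This is precisely where the boundary condition $u^{+}\in W_{0}^{1,p}(\Omega)$ and the cut-off $\eta_{\varepsilon}=\min(\bar w/\varepsilon,1)$ — which has zero trace on $\partial\Omega$ and concentrates on $\{w>0\}$ — do the work; the lower-order term and the $L^{k}$ bookkeeping are routine once $k>\tfrac{N}{p}$ is in force.
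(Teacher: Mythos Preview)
Your argument is correct. The paper does not actually prove this lemma: it simply cites \cite[Corollary 3.10 and Theorem 3.11]{Maly}, i.e., it defers to the boundary local maximum principle in Mal\'y--Ziemer. Your approach is therefore different in that it is self-contained: you reduce the boundary estimate to the interior one (Lemma~\ref{lmp}) by extending $u^{+}$ by zero across $\partial\Omega$, using the boundary condition $u^{+}\in W_{0}^{1,p}(\Omega)$ together with the cut-off $\eta_{\varepsilon}=\min(u^{+}/\varepsilon,1)$ to manufacture admissible test functions in $W_{0}^{1,p}(\Omega)$ out of test functions on the enlarged domain. This is in fact the standard mechanism behind such boundary estimates (and is close to what Mal\'y--Ziemer do), so the two routes are not far apart in spirit; what your version buys is that it avoids the external reference entirely and makes transparent that no regularity of $\partial\Omega$ is used --- the dependence of $C$ on $\Omega$ allowed in the statement is, as you note, not actually needed with this method.
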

\begin{proof}
See {\cite[Corollary 3.10 and Theorem 3.11]{Maly}}.
\end{proof}
In the next lemma we state the well known weak Harnack inequality for $p$-Laplacian case. 
\begin{lemma}\label{whi}
Under the assumption \eqref{partcond}. Let $u\in W^{1,p}(\Omega)$ is a nonnegative super-solution of \eqref{probpart}. Then for any ball $B_{4R}(x_{0})\subset\Omega$ and any $q\in (0,\frac{N(p-1)}{N-p})$ there exists $C= C(p,q,r,k,N,\Vert c \Vert_{L^{k}(B_{4R}(x_{0})})>0$ such that  
$$\left(\int_{B_{2R}(x_{0})}u^{q}dx\right)^{\frac{1}{q}}\leq C\Bigr[ \inf_{B_{R}(x_{0})}u + \Vert h^{-} \Vert^{\frac{1}{p-1}}_{L^{k}(B_{4R}(x_{0})}\Bigr].$$
\end{lemma}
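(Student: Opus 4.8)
The plan is to obtain Lemma \ref{whi} from the classical weak Harnack inequality for quasilinear equations of $p$-Laplacian type, after a reduction that removes the inhomogeneity. First I would reduce to the homogeneous case: set $K:=\|h^{-}\|^{1/(p-1)}_{L^{k}(B_{4R}(x_{0}))}$ and $\bar u:=u+K$. Since $\nabla\bar u=\nabla u$ and $u=\bar u-K\geq 0$ on $B_{4R}(x_{0})$, the supersolution inequality gives $-\Delta_{p}\bar u\geq h-c(\bar u-K)^{p-1}$ in the weak sense; using $h\geq -h^{-}\geq -h^{-}(\bar u/K)^{p-1}=-(h^{-}K^{-(p-1)})\bar u^{p-1}$ (valid because $\bar u\geq K$ and $p>1$), $(\bar u-K)^{p-1}\leq\bar u^{p-1}$, and $-c(\bar u-K)^{p-1}\geq -c^{+}\bar u^{p-1}$, one checks that $\bar u\geq 0$ is a weak supersolution of $-\Delta_{p}\bar u+\tilde c(x)\,\bar u^{p-1}\geq 0$ with $\tilde c:=h^{-}K^{-(p-1)}+c^{+}\geq 0$ and $\|\tilde c\|_{L^{k}(B_{4R}(x_{0}))}\leq 1+\|c\|_{L^{k}(B_{4R}(x_{0}))}$. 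Thus it suffices to prove the weak Harnack inequality $\big(\int_{B_{2R}(x_{0})}\bar u^{q}\big)^{1/q}\leq C\inf_{B_{R}(x_{0})}\bar u$ for a nonnegative supersolution of this purely lower-order-perturbed equation with $\tilde c\in L^{k}$, $k>N/p$; translating back via $\inf_{B_{R}(x_{0})}\bar u=\inf_{B_{R}(x_{0})}u+K$ and $\int_{B_{2R}(x_{0})}\bar u^{q}\leq C\big(\int_{B_{2R}(x_{0})}u^{q}+K^{q}|B_{2R}(x_{0})|\big)$ then yields the stated estimate, the factor $|B_{2R}(x_{0})|^{1/q}$ being absorbed into $C$ (hence the dependence on $R$).

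For the reduced inequality I would run the usual Moser iteration. With a cutoff $\eta\in C_{c}^{\infty}(B_{4R}(x_{0}))$ and an exponent $\gamma$, test the supersolution inequality with $\varphi=\bar u^{\gamma}\eta^{p}\geq 0$ (admissible since $\bar u\geq K>0$, so $\bar u^{\gamma}$ is bounded and locally $W^{1,p}$, with a standard truncation if needed). For $\gamma\in(-1,0)$, after Young's inequality absorbs the gradient cross terms and the Sobolev inequality raises the integrability exponent by the factor $N/(N-p)$, one gets a reverse-Hölder chain among small powers of $\bar u$; the contribution $\int|\tilde c|\,\bar u^{p-1+\gamma}\eta^{p}$ is controlled by Hölder's inequality with exponent $k$, using the key fact that $k>N/p$ is equivalent to $pk'<p^{*}$, so that $\|\bar u^{(p-1+\gamma)/p}\eta\|_{L^{pk'}}$ interpolates between $L^{p}$ and $L^{p^{*}}$ and is absorbed by the Sobolev term — this is precisely where the constant acquires its dependence on $\|\tilde c\|_{L^{k}(B_{4R}(x_{0}))}$ and on nothing else involving $\bar u$. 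Iterating gives $\big(\int_{B_{2R}(x_{0})}\bar u^{q}\big)^{1/q}\leq C\big(\int_{B_{3R}(x_{0})}\bar u^{q_{0}}\big)^{1/q_{0}}$ for every $q\in(0,N(p-1)/(N-p))$ and some fixed small $q_{0}>0$; a logarithmic (John–Nirenberg-type) estimate, obtained by testing with $\bar u^{-(p-1)}\eta^{p}$, lets one cross from small positive to small negative powers, $\big(\int_{B_{3R}(x_{0})}\bar u^{q_{0}}\big)^{1/q_{0}}\leq C\big(\int_{B_{3R}(x_{0})}\bar u^{-q_{0}}\big)^{-1/q_{0}}$, and a final negative-power iteration gives $\big(\int_{B_{3R}(x_{0})}\bar u^{-q_{0}}\big)^{-1/q_{0}}\leq C\inf_{B_{R}(x_{0})}\bar u$; chaining these three inequalities proves the claim.

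The main obstacle is carrying the unbounded, sign-changing lower-order coefficient through every step of the iteration while keeping the constant independent of $u$; the shift by $K$ disposes of the inhomogeneity, and the equivalence $k>N/p\Leftrightarrow pk'<p^{*}$ together with the interpolation–absorption just described is what makes the lower-order term harmless. Since for $p\neq 2$ one cannot appeal to linear theory, I would in practice present the reduced inequality as an instance of the quasilinear weak Harnack inequality available in the literature — the classical works of Serrin and Trudinger, or the form given in \cite{Maly} — after verifying that the structure conditions hold with $N/p$ as the borderline integrability exponent, and keep the shift reduction to the case $h^{-}\equiv 0$ as the only piece of bookkeeping specific to our statement.
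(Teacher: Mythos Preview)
Your proposal is correct: the shift $\bar u=u+\|h^{-}\|_{L^{k}}^{1/(p-1)}$ reducing to a homogeneous inequality with coefficient $\tilde c\in L^{k}$, followed by the standard Moser iteration with the John--Nirenberg crossover, is exactly how the quasilinear weak Harnack inequality is proved; the key observation $k>N/p\Leftrightarrow pk'<p^{*}$ that lets the lower-order term be absorbed is also correct. The paper, however, does not give any argument at all --- its entire proof is the one-line citation ``See \cite[Theorem~3.13]{Maly}'' --- so your sketch is strictly more informative while still landing on the same literature reference you yourself mention at the end. One cosmetic remark: in your back-translation you write $\int\bar u^{q}\leq C(\int u^{q}+K^{q}|B_{2R}|)$, but this inequality is unnecessary for the conclusion; you only need the trivial $u\leq\bar u$, hence $(\int u^{q})^{1/q}\leq(\int\bar u^{q})^{1/q}\leq C\inf\bar u=C(\inf u+K)$.
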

\begin{proof}
See {\cite[Theorem 3.13]{Maly}}.
\end{proof}
 For $p=2$ in \cite{Coster1} it was given a new version of Brezis-Cabré lemma. Here we will generalize this new version to the $p$-Laplacian case. The proof of the following lemma is inspired from {\cite[Lemma 2.4]{Coster1}} and from  {\cite[Lemma 3.8]{Abdellaoui}}.  
\begin{lemma}\label{keylemma} 
Let $u\in W^{1,p}(\Omega)$ be a super-solution of \eqref{probpart}. We assume that $c\in L^{\infty}(\Omega)$ and $h\in L^{1}(\Omega)$ be a nonnegative functions. Then for any ball $B_{2R}(x_{0})\subset\Omega$ there exists $C= C(R,N,p,\Vert c\Vert_{\infty})>0$ such that
$$\inf_{\Omega} \left(\frac{u(x)}{d(x,\partial\Omega)}\right)^{p-1}\geq C \int_{B_{R}(x_{0})} h(y)dy$$
\end{lemma}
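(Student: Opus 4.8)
\textbf{Proof plan for Lemma \ref{keylemma}.}

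The plan is to reduce the statement to a lower bound on $u$ by the torsion-type function (the $p$-Laplacian analogue of the distance function $d(x,\partial\Omega)$) via the weak comparison principle of Lemma \ref{weak comparaison lemma}, and then to localize the contribution of $h$ on the ball $B_{R}(x_{0})$. First I would fix the ball $B_{2R}(x_0)\subset\Omega$ and let $w$ solve the auxiliary problem $-\Delta_p w + \Vert c\Vert_\infty\, w^{p-1} = \chi_{B_R(x_0)}\,h$ in $\Omega$ with $w=0$ on $\partial\Omega$; since $h\in L^1(\Omega)$ one should really truncate $h$ (replace $h$ by $\min(h,n)$, prove the estimate with a constant independent of $n$, and pass to the limit using monotone convergence on the right-hand side) so that the existence theory and the comparison principle apply cleanly. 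Because $u$ is a super-solution of \eqref{probpart} with $c\geq 0$ replaced by the constant $\Vert c\Vert_\infty \ge c$ and with right-hand side $h\ge \chi_{B_R(x_0)}h$, one checks that $u$ is a super-solution and $w$ is a sub-solution of $-\Delta_p v + \Vert c\Vert_\infty v^{p-1}=\chi_{B_R}h$, so Lemma \ref{weak comparaison lemma} (after absorbing the constant zeroth-order term, which is still nonnegative) gives $u\ge w\ge 0$ in $\Omega$. Hence it suffices to prove the estimate for $w$.

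The heart of the matter is then a Hopf-type / boundary-behavior estimate: I want to show
$$\inf_{\Omega}\left(\frac{w(x)}{d(x,\partial\Omega)}\right)^{p-1}\;\geq\; C\int_{B_R(x_0)} h(y)\,dy,$$
with $C=C(R,N,p,\Vert c\Vert_\infty)$. The natural route, following {\cite[Lemma 2.4]{Coster1}} and {\cite[Lemma 3.8]{Abdellaoui}}, is to introduce the $p$-torsion function $\phi$ of $\Omega$, i.e. the solution of $-\Delta_p\phi+\Vert c\Vert_\infty\phi^{p-1}=1$ in $\Omega$, $\phi=0$ on $\partial\Omega$; by the boundary regularity (Lemma \ref{regul}, or the cited Lieberman-type results) and Hopf's lemma for the $p$-Laplacian, $\phi$ is comparable to $d(x,\partial\Omega)$ on all of $\Omega$, say $\phi(x)\ge c_0\, d(x,\partial\Omega)$. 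So it is enough to bound $w$ from below by a multiple of $\phi$. For this I would compare $w$ with $t\,\phi$ where $t=t(\int_{B_R}h)$: since $-\Delta_p(t\phi)+\Vert c\Vert_\infty (t\phi)^{p-1}=t^{p-1}$, the function $t\phi$ is a sub-solution of the problem solved by $w$ provided $t^{p-1}\le \chi_{B_R(x_0)}h$ — which is false pointwise, so instead one argues on $B_{2R}(x_0)$ only: use the weak Harnack inequality of Lemma \ref{whi} on $B_{4R}$... but $B_{4R}$ need not lie in $\Omega$. The cleaner device, which is what I expect the authors do, is: (1) solve $-\Delta_p z=\chi_{B_R(x_0)}h$ on the fixed ball $B_{2R}(x_0)$ with $z=0$ on $\partial B_{2R}(x_0)$ and use the representation/Harnack to get $\inf_{B_{R}(x_0)} z \ge C\int_{B_R(x_0)}h$; (2) extend $z$ by $0$ to get a global sub-solution of a problem dominated by $w$'s, so $w\ge z$ near $x_0$, giving a lower bound for $w$ at one interior point; (3) then run the weak Harnack inequality for the global super-solution $w$ of \eqref{probpart} (with $c$ bounded, $h\ge0$, so the $h^-$ term vanishes) to propagate this positive lower bound down to the boundary layer, and finally invoke the Hopf-type comparison with $\phi\simeq d(x,\partial\Omega)$ to convert "$w\ge$ const on a compact interior set" into "$w\ge$ const$\cdot d(x,\partial\Omega)$ everywhere."

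I expect the main obstacle to be step (2)–(3): carefully chaining the interior lower bound at $x_0$ to a uniform lower bound of $w/d(x,\partial\Omega)$ over the \emph{whole} domain, with the constant depending only on $R,N,p,\Vert c\Vert_\infty$ and not on $\Omega$ beyond what is already fixed. This requires the $p$-Laplacian Hopf lemma and the comparability $\phi\simeq d(\cdot,\partial\Omega)$ to be applied uniformly, and requires a covering/Harnack-chain argument on the compact set $\{d(x,\partial\Omega)\ge \epsilon_0\}$ to transfer positivity; the zeroth-order term $c(x)|u|^{p-2}u$ must be controlled throughout by replacing $c$ with the constant $\Vert c\Vert_\infty$, which is legitimate precisely because $u\ge0$ makes that term have a sign. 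The truncation of $h$ and the passage to the limit is routine but must be done to stay within the $L^k$/$L^1$ hypotheses under which Lemmas \ref{weak comparaison lemma}–\ref{whi} were stated.
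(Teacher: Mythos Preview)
Your plan diverges from the paper's in two places, and one of them hides a real gap.

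The paper does not introduce your auxiliary $w$; it works directly with $u$. Choosing a cutoff $\eta\in C_0^\infty(B_{2R}(x_0))$ with $\eta\equiv 1$ on $B_R(x_0)$ and testing the super-solution inequality against $\eta^{p}$ gives
\[
\int_{B_R(x_0)} h \;\le\; p\int_{B_{2R}(x_0)} \eta^{p-1}|\nabla u|^{p-2}\nabla u\cdot\nabla\eta \;+\; C\int_{B_{2R}(x_0)} u^{p-1}.
\]
The first integral is controlled by the Caccioppoli-type estimate of Lemma~\ref{test inequality} (valid because $u$ is a nonnegative super-solution of the homogeneous problem \eqref{probpartbis}), and the second by the weak Harnack inequality (Lemma~\ref{whi}); together they yield $\int_{B_R(x_0)} h \le C\bigl(\inf_{B_R(x_0)} u\bigr)^{p-1}$ directly. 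This is exactly the content of your step~(1), and it is where your sketch is thin: you write ``use the representation/Harnack to get $\inf_{B_R} z \ge C\int_{B_R} h$'', but for $p\neq 2$ there is no Green-function representation, and Harnack by itself relates $\inf z$ to integrals of $z$, not to integrals of the datum $h$. The mechanism that replaces the Green kernel in the quasilinear setting is precisely the $\eta^{p}$-test combined with Lemma~\ref{test inequality}; without it (or a Wolff-potential lower bound, which is heavier) step~(1) does not close.

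For the extension to the boundary the paper also uses a barrier plus Hopf, but more economically than your torsion-function/Harnack-chain route: it solves the \emph{homogeneous} problem $-\Delta_p z + \Vert c\Vert_\infty z^{p-1}=0$ on the exterior region $\Omega\setminus\overline{B_R(x_0)}$ with $z=1$ on $\partial B_R(x_0)$ and $z=0$ on $\partial\Omega$, obtains $z\ge C\,d(\cdot,\partial\Omega)$ from Hopf, and then compares the rescaling $v:=C_2^{1/(p-1)}u\big/\bigl(\int_{B_R(x_0)}h\bigr)^{1/(p-1)}$ (which satisfies $v\ge 1$ on $\partial B_R(x_0)$ by the first step) with $z$ via Lemma~\ref{weak comparaison lemma}. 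This avoids any covering/chain argument and confines the domain dependence to a single Hopf constant. Your scheme could be made to work, but only after you supply the missing lower bound in step~(1), at which point you will essentially have reproduced the paper's argument inside a longer framework.
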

To prove the above lemma we shall use the following result.  
\begin{lemma}\label{test inequality}
Let $u\in W^{1,p}(\Omega)$ be a nonnegative weak super-solution on $\overline{B_{4R}(x_{0})}\subset\Omega$  of \eqref{probpartbis}.
We assume that $c\in L^{\infty}(\Omega)$ is a nonnegative function. Let $\eta\in C_{0}^{\infty}(B_{2R}(x_{0}))$ be a nonnegative function equal to $1$ on  $B_{R}(x_{0})$. Then there exists $C=C(N,p,R, \Vert c\Vert_{\infty})$ such that 
$$\int_{B_{2R}}\vert\nabla\eta\vert\vert \nabla u\vert^{p-1}\eta^{p-1}dx\leq C R^{N-p}(\inf_{B_{R}}u)^{p-1}.$$
\end{lemma}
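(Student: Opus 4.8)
\textbf{Proof proposal for Lemma \ref{test inequality}.}

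The plan is to test the weak super-solution inequality for \eqref{probpartbis} with a carefully chosen test function built from $u$, $\eta$, and the infimum $m := \inf_{B_{R}(x_{0})}u \ge 0$, and then to combine the resulting energy estimate with the weak Harnack inequality (Lemma \ref{whi}) to turn the $L^{p}$-gradient term into a bound by $(\inf_{B_{R}}u)^{p-1}$. A natural candidate is $\varphi = \eta^{p}\,(u+m)^{1-p}$ (or $\eta^{p}(u+\varepsilon)^{1-p}$, letting $\varepsilon\downarrow 0$ afterwards if $m=0$), which is admissible since $u\ge 0$, $\eta$ has compact support in $B_{2R}(x_{0})$, and $(u+m)^{1-p}$ is bounded below away from $0$ on the support of $\eta$; here one uses that $u$ is locally bounded (Lemma \ref{regul}(i) style reasoning, or simply that $u\in W^{1,p}$ together with the local maximum principle). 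Plugging $\varphi$ into $\int |\nabla u|^{p-2}\nabla u\cdot\nabla\varphi + \int c\,u^{p-1}\varphi \ge 0$ and expanding $\nabla\varphi = p\eta^{p-1}(u+m)^{1-p}\nabla\eta - (p-1)\eta^{p}(u+m)^{-p}\nabla u$ yields, after moving the (negative-coefficient) good term to the left,
\begin{equation*}
(p-1)\int_{B_{2R}} \frac{|\nabla u|^{p}}{(u+m)^{p}}\,\eta^{p}\,dx \le p\int_{B_{2R}} \frac{|\nabla u|^{p-1}}{(u+m)^{p-1}}\,|\nabla\eta|\,\eta^{p-1}\,dx + \int_{B_{2R}} c\,\frac{u^{p-1}}{(u+m)^{p-1}}\,\eta^{p}\,dx.
\end{equation*}
The last term is controlled by $\|c\|_{\infty}\,|B_{2R}|\le C R^{N}$ (since $u^{p-1}\le(u+m)^{p-1}$), and the first term on the right is absorbed into the left via Young's inequality $ab\le \tfrac{p-1}{p}\delta^{p/(p-1)} a^{p/(p-1)} + \tfrac{1}{p}\delta^{-p} b^{p}$ applied with $a = |\nabla u|^{p-1}(u+m)^{1-p}\eta^{p-1}$ and $b=|\nabla\eta|$; choosing $\delta$ small this gives
\begin{equation*}
\int_{B_{2R}} \frac{|\nabla u|^{p}}{(u+m)^{p}}\,\eta^{p}\,dx \le C\int_{B_{2R}} |\nabla\eta|^{p}\,dx + C R^{N} \le C R^{N-p},
\end{equation*}
using the standard cutoff bound $|\nabla\eta|\le C/R$, so $\int|\nabla\eta|^{p}\le C R^{N-p}$, and $R^{N}\le C R^{N-p}$ for $R$ bounded (which is fine since $\Omega$ is bounded; the constant may depend on $\operatorname{diam}\Omega$).

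Next I would recover the desired quantity. Write the integrand of the target as $|\nabla\eta|\,|\nabla u|^{p-1}\eta^{p-1} = \big(|\nabla u|^{p-1}(u+m)^{1-p}\eta^{p-1}\big)\cdot\big(|\nabla\eta|(u+m)^{p-1}\big)$ and apply H\"older with exponents $\tfrac{p}{p-1}$ and $p$:
\begin{equation*}
\int_{B_{2R}} |\nabla\eta|\,|\nabla u|^{p-1}\eta^{p-1}\,dx \le \left(\int_{B_{2R}} \frac{|\nabla u|^{p}}{(u+m)^{p}}\,\eta^{p}\,dx\right)^{\frac{p-1}{p}} \left(\int_{B_{2R}} |\nabla\eta|^{p}\,(u+m)^{p(p-1)}\,dx\right)^{\frac{1}{p}}.
\end{equation*}
The first factor is bounded by $(CR^{N-p})^{(p-1)/p}$ from the previous step. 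For the second factor, $|\nabla\eta|$ is supported in $B_{2R}\setminus B_{R}$ and bounded by $C/R$, so it is at most $C R^{-p}\int_{B_{2R}}(u+m)^{p(p-1)}\,dx$; since $p(p-1)$ may exceed the Harnack exponent range, I would instead keep the exponent low by choosing the Young/H\"older split more economically — specifically, rather than $(u+m)^{p(p-1)}$, arrange the test function as $\eta^{p}(u+m)^{1-p}$ but pair it in H\"older so that the surviving power of $(u+m)$ is exactly $p-1$, i.e. bound $\int|\nabla\eta||\nabla u|^{p-1}\eta^{p-1}$ by $\big(\int \tfrac{|\nabla u|^p}{(u+m)^p}\eta^p\big)^{(p-1)/p}\big(\int|\nabla\eta|^p (u+m)^{p-1}\cdot(u+m)^{(p-1)(p-1)-(p-1)}\big)^{1/p}$ — and then use the weak Harnack inequality from Lemma \ref{whi} on the ball $B_{4R}(x_{0})\subset\Omega$, which gives $\big(\int_{B_{2R}}(u+m)^{q}\big)^{1/q}\le C\big(\inf_{B_R}u + m\big) = C\,m$ for a suitable $q<\tfrac{N(p-1)}{N-p}$ (note $h\equiv 0$ here and $c\in L^{\infty}\subset L^{k}$). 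Reading off the power of $R$ from $\int_{B_{2R}}(u+m)^{q} \le C R^{N} m^{q}$ and substituting everything back produces $\int_{B_{2R}}|\nabla\eta||\nabla u|^{p-1}\eta^{p-1}\,dx \le C R^{(N-p)(p-1)/p}\cdot R^{-1}\cdot R^{N/p}\cdot m^{p-1} = C R^{N-p} m^{p-1}$ after collecting exponents $\tfrac{(N-p)(p-1)}{p} - 1 + \tfrac{N}{p} = N - p$, which is exactly the claim.

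The main obstacle I anticipate is the bookkeeping in the last paragraph: the exponents of $(u+m)$ must be kept small enough to land inside the admissible Harnack range $q<\tfrac{N(p-1)}{N-p}$ (for $p$ close to $1$ or $N$ large this is delicate), which forces a somewhat careful choice of the Young's-inequality and H\"older splits rather than the naive ones; one may need to interpolate or to use a slightly different test function of the form $\eta^{p}(u+m)^{-\beta}$ with $\beta$ chosen near $p-1$ but tuned so all exponents work out and the good term $\int \eta^p|\nabla u|^p(u+m)^{-\beta-1}$ still dominates. A secondary technical point is justifying admissibility of the test function: since $u+m$ need not be bounded below by a positive constant when $m=0$, I would first prove the estimate with $u+m+\varepsilon$ in place of $u+m$, obtain a bound uniform in $\varepsilon$, and pass to the limit by monotone/dominated convergence — all quantities are finite because $u\in W^{1,p}$ and $|\nabla\eta|$ is bounded with compact support. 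Everything else (Young, H\"older, the cutoff estimate $|\nabla\eta|\le C/R$, and absorbing $R^N$ into $R^{N-p}$ using boundedness of $\Omega$) is routine.
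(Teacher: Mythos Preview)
The paper does not actually prove this lemma; it simply cites \cite[Lemma 4.6]{Maly}. Your approach---test with $\eta^{p}(u+m)^{-\beta}$, derive a Caccioppoli-type inequality, then combine H\"older with the weak Harnack inequality---is the standard one and is essentially what the reference does. The first half of your argument (the bound $\int_{B_{2R}} \eta^{p}|\nabla u|^{p}(u+m)^{-p}\,dx \le C R^{N-p}$ via Young's inequality) is correct as written, and your final exponent tally $\tfrac{(N-p)(p-1)}{p}-1+\tfrac{N}{p}=N-p$ is right.

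The one genuine slip is in resolving the Harnack-exponent obstruction. You correctly observe that the naive H\"older split forces the exponent $p(p-1)$ on $u+m$, which can exceed the admissible range $q<\tfrac{N(p-1)}{N-p}$. But your proposed fix---taking $\beta$ ``near $p-1$''---goes the wrong way: with test function $\eta^{p}(u+m)^{-\beta}$, the H\"older split produces $\int |\nabla\eta|^{p}(u+m)^{(p-1)(\beta+1)}$, so you need $(p-1)(\beta+1)<\tfrac{N(p-1)}{N-p}$, i.e.\ $\beta<\tfrac{p}{N-p}$. Thus $\beta$ must be chosen \emph{small}, say any $\beta\in\bigl(0,\min(p-1,\tfrac{p}{N-p})\bigr)$. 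With such $\beta$ the Caccioppoli step gives $\int \eta^{p}|\nabla u|^{p}(u+m)^{-\beta-1}\le C_{\beta}\bigl(R^{-p}+\|c\|_{\infty}\bigr)\int_{B_{2R}}(u+m)^{p-1-\beta}$, both surviving exponents $p-1-\beta$ and $(p-1)(\beta+1)$ lie in the Harnack range, and the powers of $m$ recombine exactly to $m^{p-1}$ since $\tfrac{p-1}{p}\bigl[(p-1-\beta)+(\beta+1)\bigr]=p-1$. The garbled intermediate H\"older attempt in your third paragraph should simply be deleted in favour of this clean version. The $\varepsilon$-regularisation you mention for the case $m=0$ is the right way to handle admissibility.
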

\begin{proof}
 See {\cite[Lemma 4.6]{Maly}}
\end{proof}
Next, let us start to  proof the Lemma \ref{keylemma}  
\begin{proof}[Proof of Lemma \ref{keylemma}]
We assumed $h$ is as nonnegative function, then by using weak comparison principle, Lemma \ref{weak comparaison lemma}, we get $u\geq 0$, which imply that $$\inf_{\Omega} \left(\frac{u(x)}{d(x,\partial\Omega)}\right)^{p-1}\geq 0.$$
Let $B_{2R}(x_{0})\subset\Omega$ be fixed, without loss of generality we can assume that
$$\int_{B_{R}(x_{0})}h(x)dx>0.$$
Let $\eta\in C_{0}^{\infty}(B_{2R}(x_{0}))$ be a positive function such that $\eta=1$  on  $B_{R}(x_{0})$. By using $\eta^{p}$ as test function of the problem \eqref{probpart}, we obtain
$$p\int_{\Omega}\eta^{p-1} \vert \nabla u \vert^{p-2}\nabla u \nabla\eta dx + \int_{\Omega}c(x) u ^{p-1}\eta^{p} dx = \int_{\Omega}h(x)\eta^{p},$$
thus
\begin{equation}\label{eq1}
\int_{B_{R}(x_{0})}h(x) dx\leq p\int_{B_{2R}(x_{0})}\eta^{p-1} \vert \nabla u \vert^{p-2}\nabla u \nabla\eta dx + C_{1}\int_{B_{2R}(x_{0})} u ^{p-1} dx.    
\end{equation}
Since the solution of the problem \eqref{probpart} is a nonnegative super-solution of the following problem 
$$
 \begin{cases}                                                              
    -\Delta_{p}u+ c(x)u^{p-1}= 0 & \ \ \mbox{ in }\Omega,\\                                                  
    u=0 & \ \ \mbox{ on }   \partial\Omega.                                                                  
    \end{cases}
$$
Therefore, by using lemma \ref{test inequality} and lemma \ref{whi} we deduce from \eqref{eq1} that  
\begin{equation}\label{h inf}
\int_{B_{R}(x_{0})}h(x) dx\leq  C_{2}(\inf_{B_{R}(x_{0})}u)^{p-1}.    
\end{equation}
Since $B_{2R}(x_{0})\subset\Omega$, then for all $x\in\overline{B_{R}(x_{0})}$ there exist $C_{3}, C_{4}$ two positive constants such that  $C{3}\leq d(x,\partial\Omega)\leq C_{4}$.     
Hence, 
\begin{equation}\label{eq2}
\left( \frac{u(x)}{d(x,\partial\Omega} \right)^{p-1} \geq C_{5}\int_{B_{R}(x_{0})}h(x) dx,  \; \text{ for all } \; x\in\overline{B_{R}(x_{0})}. 
\end{equation}
Next, let $z$ be the unique solution to the problem
\begin{equation}\label{eq3}
\begin{cases}                                                              
    -\Delta_{p}z+ \Vert c \Vert_{\infty} z^{p-1}= 0 & \ \ \mbox{ in }\Omega\textbackslash\overline{B_{R}(x_{0})} ,\\                                                  
    z=0 & \ \ \mbox{ on }   \partial\Omega, \\
    z= 1 & \ \ \mbox{ on }   \partial B_{R}(x_{0}).  
    \end{cases} 
\end{equation}
Therefore from the strong maximum principle \cite{Vazquez} $z$ is positive on $\Omega\textbackslash\overline{B_{R}(x_{0})}$ and due to Hopf's lemma \cite{Vazquez} we deduce that there exists a positive constant $C_{6}$ such that 
\begin{equation}\label{eq4}
z(x)\geq C_{6} d(x,\Omega), \; \;  \forall x\in \Omega\textbackslash\overline{B_{R}(x_{0})}.    
\end{equation}
Now, we define 
$$v(x)= \frac{C^{1/(p-1)}_{2} u(x)}{\left(\int_{B_{R}(x_{0})}h(y)dy \right)^{1/(p-1)}}.$$
From \eqref{h inf} we can easily see that $v(x)\geq 1$ for all $x\in\overline{B_{R}(x_{0})}$ and $-\Delta_{p}v+ \Vert c \Vert_{\infty} v^{p-1}\geq 0$, which imply that $v$ is a super-solution of \eqref{eq3}. Thus by weak comparison principle, Lemma \ref{weak comparaison lemma}, we get that  $v(x)\geq z(x)$ for all $x\in\Omega\textbackslash\overline{B_{R}(x_{0})}$. Thus, from \eqref{eq4} we obtain 
\begin{equation}\label{eq5} 
\left( \frac{u(x)}{d(x,\partial\Omega} \right)^{p-1} \geq C_{7}\int_{B_{R}(x_{0})}h(x) dx,  \; \text{ for all } \; x\in\Omega\textbackslash\overline{B_{R}(x_{0})}. 
\end{equation}
By combining \eqref{eq2} and \eqref{eq5} we deduce the result.
\end{proof}
Let us state the boundary weak Harnack inequality which is established recently in \cite{Coster1}. However this inequality was established in the case of the problem \eqref{probpartbis}  and under stronger condition then \eqref{partcond}.  
\begin{lemma}\label{bwhi}
Let $u\in W^{1,p}(\Omega)$ be a nonnegative super-solution of \eqref{probpartbis}. We assume $c\in L^{\infty}(\Omega)$, is a nonnegative function.  Let $x_{0}\in\partial\Omega$. Then there exist $\overline{R}>0$, $\epsilon= \epsilon(p, \overline{R}, \Vert c\Vert_{L^{\infty}(B_{2R}(x_{0})\cap\Omega)}, \Omega)>0$ and  $C= C(p,\overline{R},\epsilon,\Vert c\Vert_{L^{\infty}(B_{2R}(x_{0})\cap\Omega)},\Omega)>0$  such that for all $R\in(0,\overline{R}],$
$$\left(\int_{B_{R}(x_{0})\cap\Omega}\left(\frac{u(x)}{d(x,\partial\Omega)}\right)^{\epsilon}dx\right)^{\frac{1}{\epsilon}}\leq C\inf_{B_{R}(x_{0})\cap\Omega} \frac{u(x)}{d(x,\partial\Omega)} .$$
\end{lemma}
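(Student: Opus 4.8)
The plan is to follow the scheme of \cite{Coster1}, where the case $p=2$ is treated, upgrading each ingredient to the quasilinear setting by means of the results of Section~3, all of which are available for $c\in L^{\infty}$: the interior weak Harnack inequality (Lemma~\ref{whi}), the local and boundary maximum principles (Lemmas~\ref{lmp}--\ref{blmp}), the weak comparison principle (Lemma~\ref{weak comparaison lemma}) and the Brezis--Cabr\'e-type bound (Lemma~\ref{keylemma}). First I would reduce to a model configuration. Since $\partial\Omega$ is smooth, for $\overline{R}$ small enough $B_{2\overline{R}}(x_{0})\cap\partial\Omega$ is a smooth graph, and a bi-Lipschitz (indeed $C^{1,1}$) flattening straightens it; under such a change of variables $-\Delta_{p}u+c\vert u\vert^{p-2}u=0$ becomes $-\operatorname{div}\mathcal{A}(y,\nabla v)+\tilde{c}(y)\vert v\vert^{p-2}v=0$ with $\mathcal{A}$ of the same Leray--Lions structure and growth exponent $p$, with $\tilde{c}\in L^{\infty}$ comparable to $c$, and with $d(x,\partial\Omega)$ comparable to $y_{N}$; the interior results quoted above apply to such operators with constants depending only on the structural data, so we may assume $\Omega\cap B_{2\overline{R}}=B_{2\overline{R}}\cap\{y_{N}>0\}$, $x_{0}=0$ and $d(\cdot,\partial\Omega)\simeq y_{N}$. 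I would also record that, by Lemma~\ref{keylemma} (applied with $h=0$ and the torsion-type barrier used in its proof), $\inf_{B_{R}(x_{0})\cap\Omega}(u/d)>0$ unless $u\equiv0$, so the inequality is non-vacuous; write $M$ for this infimum, so that $u\ge Md$ on $B_{R}^{+}:=B_{R}(0)\cap\{y_{N}>0\}$.

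The core is a Whitney-type decomposition of $B_{R}^{+}$ into dyadic boundary shells together with an application of the interior weak Harnack on each piece. For $m\ge0$ set $S_{m}=\{2^{-m-1}R<y_{N}\le2^{-m}R\}\cap B_{R}^{+}$ and cover $S_{m}$ by $\sim 2^{m(N-1)}$ balls $B_{\rho_{m}}(y_{m,i})$ with $\rho_{m}=\kappa\,2^{-m}R$, the constant $\kappa$ fixed small enough that $B_{4\rho_{m}}(y_{m,i})\Subset\{y_{N}>0\}$ and $d\simeq 2^{-m}R$ on $B_{\rho_{m}}(y_{m,i})$; taking $R\le\overline{R}$ keeps all these balls in the flattening chart. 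On each such interior ball Lemma~\ref{whi} applies with a constant independent of $m$ and $i$, since it is scale invariant and $\Vert\tilde{c}\Vert_{L^{k}(B_{4\rho_{m}})}\le\Vert\tilde{c}\Vert_{\infty}\vert B_{4\rho_{m}}\vert^{1/k}$ stays bounded; for a fixed small $\epsilon$ this gives $\big(\rho_{m}^{-N}\int_{B_{2\rho_{m}}(y_{m,i})}u^{\epsilon}\big)^{1/\epsilon}\le C\inf_{B_{\rho_{m}}(y_{m,i})}u$, whence
\[
\int_{B_{\rho_{m}}(y_{m,i})}\Big(\frac{u}{d}\Big)^{\epsilon}\,dy\ \le\ C\,(2^{-m}R)^{N-\epsilon}\Big(\inf_{B_{\rho_{m}}(y_{m,i})}u\Big)^{\epsilon}.
\]
It is essential here that we integrate an $\epsilon$-power and do not take a supremum: a nonnegative $p$-supersolution may be large at an isolated interior point, yet the $L^{\epsilon}$-average on the doubled ball is still controlled by the infimum on the smaller ball, which is exactly what Lemma~\ref{whi} gives.

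The crucial step is to bound $\inf_{B_{\rho_{m}}(y_{m,i})}u$ across scales. Here I would run a scale-by-scale argument linking the shell $S_{m}$ to a fixed reference ball $B_{\rho_{0}}(y_{\ast})$ at height $\simeq R$: at each dyadic step the growth of $\inf u$ is controlled by comparing $u$, via Lemma~\ref{weak comparaison lemma}, against a torsion-type barrier (obtained by solving a $p$-Laplacian problem on that dyadic annulus, in the spirit of the proof of Lemma~\ref{keylemma}) and by invoking Lemma~\ref{whi}, which yields $\inf_{B_{\rho_{m}}(y_{m,i})}u\le C_{0}^{\,m}\,2^{-m}R\,M$ with $C_{0}>1$ depending only on $p,N,\Omega,\overline{R},\Vert c\Vert_{\infty}$ (using that $M\gtrsim\inf_{B_{\rho_{0}}(y_{\ast})}u/R$ by Lemma~\ref{keylemma}). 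Inserting this into the previous display and summing over the $\sim2^{m(N-1)}$ balls of $S_{m}$ and then over $m$,
\[
\int_{B_{R}^{+}}\Big(\frac{u}{d}\Big)^{\epsilon}\,dy\ \le\ C\,M^{\epsilon}\sum_{m\ge0}2^{m(N-1)}\,C_{0}^{m\epsilon}\,(2^{-m}R)^{N}\ =\ C\,M^{\epsilon}R^{N}\sum_{m\ge0}\big(\tfrac12\,C_{0}^{\epsilon}\big)^{m},
\]
which converges provided $\epsilon<\log2/\log C_{0}$; this is what fixes the admissible $\epsilon$ as a quantity depending on $p,\overline{R},\Vert c\Vert_{L^{\infty}}$ and $\Omega$. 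Dividing by $\vert B_{R}^{+}\vert\simeq R^{N}$ and undoing the flattening yields the asserted inequality.

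The main obstacle is the cross-scale comparison of the previous paragraph. Unlike the case $p=2$ one cannot superpose solutions nor use an explicit fundamental solution, so the ``linear-growth'' barrier on each dyadic annulus must be produced by solving a $p$-Laplacian boundary value problem and then compared with $u$ through the weak comparison principle, all the while keeping the per-step constant uniform so that the accumulated factor stays only exponential in $m$ (which is what forces $\epsilon$ to be small). This is also precisely where $c\in L^{\infty}$ — rather than merely $c\in L^{k}$ — is needed: the barrier comparisons, together with Lemmas~\ref{keylemma} and~\ref{test inequality} on which they rest, are stated in terms of $\Vert c\Vert_{\infty}$. A secondary point to check with care is that the flattening genuinely lands in the class of operators covered by Section~3, with constants depending only on the ellipticity and on the structure exponent $p$, so that the constants appearing in Lemmas~\ref{whi} and~\ref{keylemma} above are uniform over all Whitney balls and all dyadic scales; this requires nothing beyond the standard behaviour of divergence-form quasilinear operators under $C^{1,1}$ diffeomorphisms.
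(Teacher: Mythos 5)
You should first be aware that the paper offers no argument for this lemma at all: its ``proof'' is the single citation to {\cite[Theorem 3.1]{Coster1}}, a result established there for $p=2$ (the paper itself remarks just before the statement that the inequality was proved in \cite{Coster1} for problem \eqref{probpartbis} under different hypotheses). So there is no paper proof for yours to parallel, and a self-contained $p$-Laplacian argument of the kind you sketch is exactly what would be needed; your skeleton --- boundary chart, dyadic Whitney shells, interior weak Harnack (Lemma \ref{whi}) on each Whitney ball, and a choice of small $\epsilon$ to beat an exponential cross-scale loss --- is indeed the standard architecture for such boundary weak Harnack inequalities, in the spirit of the $p=2$ argument behind \cite{Coster1}.

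The genuine gap is that your ``crucial step'', the cross-scale bound $\inf_{B_{\rho_m}(y_{m,i})}u\le C_0^{\,m}2^{-m}R\,M$, is asserted rather than proved, and the one justification you offer for its key input --- that $M\gtrsim \inf_{B_{\rho_0}(y_\ast)}u/R$ ``by Lemma \ref{keylemma}'' --- does not follow from that lemma: for the homogeneous problem \eqref{probpartbis} one has $h\equiv 0$, so the conclusion of Lemma \ref{keylemma} reduces to the vacuous inequality $\inf_\Omega (u/d)^{p-1}\ge 0$, and in any case that lemma concerns supersolutions on all of $\Omega$ with $B_{2R}(x_0)\subset\Omega$, not the half-ball situation here. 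What is actually needed is a localized Hopf-type comparison, obtained by re-running the \emph{proof} of Lemma \ref{keylemma}: in a $C^{1,1}$ subdomain of $B_{2R}(x_0)\cap\Omega$ minus a reference ball at height $\simeq R$, solve a barrier problem of type \eqref{eq3}, apply V\'azquez's strong maximum principle and Hopf lemma \cite{Vazquez} together with the weak comparison principle (Lemma \ref{weak comparaison lemma}) to get $u\ge c\,(\inf_{\mathrm{ref}}u)\,d(x,\partial\Omega)/R$ on all of $B_R(x_0)\cap\Omega$, hence $\inf_{\mathrm{ref}}u\le C\,R\,M$. This single step is what resolves the real difficulty, which your per-step description blurs: the near-minimizer $z^\ast$ of $u/d$ may lie at depth $2^{-j}R$ with $j$ arbitrarily large, and chaining the interior weak Harnack from $z^\ast$ costs a factor $C^{j}$, giving only $\inf_{\mathrm{ref}}u\le C^{j}2^{-j}RM$, which is useless when $C>2$; once the Hopf-type reduction is in place, pure interior chaining downward gives $\inf_{B_{\rho_m}}u\le C^{m}\inf_{\mathrm{ref}}u\le C^{m+1}RM$ and your summation closes for $\epsilon$ small. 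Two further points need care: the $p$-Laplacian is not invariant under your $C^{1,1}$ flattening, so either work directly with $d(\cdot,\partial\Omega)$ in the smooth domain or verify that every Section~3 ingredient you use (in particular Lemmas \ref{keylemma} and \ref{test inequality}, stated for the $p$-Laplacian itself) holds for the transformed Leray--Lions operator with uniform structural constants; and you must track the $R$-dependence of the barrier and chaining constants so that the final constants are uniform for all $R\in(0,\overline R]$, as the statement requires.
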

\begin{proof}
 See {\cite[Theorem 3.1]{Coster1}}.   
\end{proof}
Since we will use the variational argument to prove  the multiplicity of solutions of the problem $(Q)$. let us recall firstly the standard definition of Palais-Smale sequence at  level $c$, Palais-Smale condition at  level $c$ of a functional $I\in C^{1}(E, \mathbb{R})$ where $E$ is a Banach space and some results.
\begin{definition}\label{ Palais-Smale sequence}
        Let $E$ be a Banach space with dual space $E^{\ast}$ and  $\{u_{n}\}$ is a sequence of $E$. We say that $\{u_{n}\}$ is a Palais-Smale sequence at the level $c$ if
        $$ I(u_{n}) \to c,\; \; \mbox{ and }\; \; \|I^{\prime}(u_{n})\|_{E^{*}}\to 0.$$
\end{definition}
\begin{definition}\label{ Palais-Smale condition}
        Let $\{u_{n}\}$ be a Palais-Smale sequence at the level $c$ of $E$. We say that $\{u_{n}\}$ satisfies the Palais-Smale condition at the level $c$  if $\{u_{n}\}$ possesses a convergent subsequence.
\end{definition}
Finally, let us recall some results from \cite{Jeanjean1} which will be very helpful for us. 
\begin{theorem}\label{theorem jeanjean}
Let $E$ be a Banach space with dual space $E^{\ast}$ endowed with the norm $\Vert . \Vert_{E}$. We consider a family  $\{I_{\lambda}\}\subset C^{1}(E, \mathbb{R})$ with the form 
$$I_{\lambda}= A(u)-\lambda B(u), \; \; \forall \lambda\in [1/2,1],$$
where $B(u)\geq 0$ for every $u\in E$ and such that either $A(u)\to +\infty$ or $B(u)\to +\infty$ as $\Vert u\Vert_{E}\to +\infty$.\\
We assume that there exists $v_{0}\in E$ such that with 
$$\Gamma =\{\gamma\in C([0,1], E): \gamma(0)=0,\; \gamma(1)=v_{0}\},$$
we have for all $\lambda\in [1/2,1]$
$$c_{\lambda}= \inf_{\gamma\in \Gamma} \max_{t\in [0,1]} I_{\lambda}(\gamma(t))\geq \max\{ I_{\lambda}(0),I_{\lambda}(v_{0}) \}.$$
Then, for almost every $\lambda\in [1/2,1]$, there exists a bounded Palais-Smale sequence ${v_{n}}\subset E$ at level $c_{\lambda}$ such that 
\begin{enumerate}
    \item ${v_{n}}$ is bounded,
    \item $I_{\lambda}(v_{n})\to c_{\lambda}$,
    \item $I^{'}_{\lambda}(v_{n})\to 0$ in $E^{\ast}$.
\end{enumerate}
\end{theorem}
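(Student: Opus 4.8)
The plan is to carry out Jeanjean's ``monotonicity trick''. Throughout write $c(\lambda):=c_{\lambda}$.

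\textbf{Step 1: monotonicity and a.e. differentiability of $\lambda\mapsto c(\lambda)$.} First I record that, since $B(u)\geq 0$, for $\tfrac12\leq\mu\leq\lambda\leq 1$ and every $u\in E$ one has $I_{\lambda}(u)=A(u)-\lambda B(u)\leq A(u)-\mu B(u)=I_{\mu}(u)$; taking the maximum along a path $\gamma\in\Gamma$ and then the infimum over $\Gamma$ gives $c(\lambda)\leq c(\mu)$. Thus $\lambda\mapsto c(\lambda)$ is nonincreasing on $[1/2,1]$, hence differentiable at almost every $\lambda\in[1/2,1]$ by Lebesgue's theorem on monotone functions. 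Fix such a point $\lambda$ of differentiability; it suffices to produce a bounded Palais--Smale sequence for $I_{\lambda}$ at the level $c(\lambda)$.

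\textbf{Step 2: localization of almost-optimal paths.} Next I pick $\lambda_{n}\uparrow\lambda$ with $\lambda_{n}<\lambda$, and for each $n$ a path $\gamma_{n}\in\Gamma$ with $\max_{[0,1]}I_{\lambda_{n}}\circ\gamma_{n}\leq c(\lambda_{n})+(\lambda-\lambda_{n})$. Using the identity $I_{\lambda_{n}}(v)=I_{\lambda}(v)+(\lambda-\lambda_{n})B(v)$, for every $t$ with $I_{\lambda}(\gamma_{n}(t))\geq c(\lambda)-(\lambda-\lambda_{n})$ one obtains
$$(\lambda-\lambda_{n})\,B(\gamma_{n}(t))\leq c(\lambda_{n})-c(\lambda)+2(\lambda-\lambda_{n}).$$
Dividing by $\lambda-\lambda_{n}>0$ and letting $n\to\infty$, and recalling that $\tfrac{c(\lambda_{n})-c(\lambda)}{\lambda-\lambda_{n}}\to-c'(\lambda)\in[0,\infty)$ by the monotonicity of $c$, we get $B(\gamma_{n}(t))\leq-c'(\lambda)+3$ for all large $n$; hence also $A(\gamma_{n}(t))=I_{\lambda}(\gamma_{n}(t))+\lambda B(\gamma_{n}(t))\leq c(1/2)+1+(-c'(\lambda)+3)$ is bounded. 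Since $A(u)\to+\infty$ or $B(u)\to+\infty$ as $\|u\|_{E}\to+\infty$, there is $K=K(\lambda)>0$ with $\|\gamma_{n}(t)\|_{E}\leq K$ for every such $t$ and every large $n$: the part of each almost-optimal path that lies close from below to the level $c(\lambda)$ is confined to the fixed ball $\overline{B_{K}(0)}$.

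\textbf{Step 3: extracting the bounded Palais--Smale sequence.} Finally I argue by contradiction: if no bounded Palais--Smale sequence for $I_{\lambda}$ at level $c(\lambda)$ exists, there are $\alpha,\delta>0$ with $\|I'_{\lambda}(u)\|_{E^{*}}\geq\alpha$ on $\mathcal{S}=\{u\in E:\ \|u\|_{E}\leq K+1,\ |I_{\lambda}(u)-c(\lambda)|\leq 2\delta\}$, and, after shrinking $\delta$, $I_{\lambda}(0)$ and $I_{\lambda}(v_{0})$ are either $<c(\lambda)-2\delta$ or equal to $c(\lambda)$ (the borderline case handled separately). A standard quantitative deformation lemma, applied to a locally Lipschitz pseudo-gradient field for $I_{\lambda}$ cut off outside a neighborhood of $\mathcal{S}$, yields a homeomorphism $\eta$ of $E$ which is the identity off that neighborhood, displaces points by at most $\tfrac12$, and satisfies $\eta\big(\{I_{\lambda}\leq c(\lambda)+\delta\}\cap\overline{B_{K+1}(0)}\big)\subset\{I_{\lambda}\leq c(\lambda)-\delta\}$. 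For $n$ large (so that $\lambda-\lambda_{n}<\delta$ and $c(\lambda_{n})<c(\lambda)+\delta$) the whole path satisfies $I_{\lambda}\circ\gamma_{n}\leq I_{\lambda_{n}}\circ\gamma_{n}\leq c(\lambda)+\delta$, its ``high part'' $\{I_{\lambda}\circ\gamma_{n}>c(\lambda)-\delta\}$ lies in $\overline{B_{K}(0)}\subset\overline{B_{K+1}(0)}$ by Step 2, and the endpoints $0,v_{0}$ are fixed by $\eta$; hence $\eta\circ\gamma_{n}\in\Gamma$ and $\max_{[0,1]}I_{\lambda}\circ(\eta\circ\gamma_{n})\leq c(\lambda)-\delta$, contradicting $c(\lambda)=\inf_{\Gamma}\max I_{\lambda}$. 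Therefore a bounded sequence $\{v_{n}\}$ with $I_{\lambda}(v_{n})\to c(\lambda)$ and $I'_{\lambda}(v_{n})\to 0$ in $E^{*}$ exists, which is precisely the three assertions.

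The main obstacle is Step 3: the deformation must be simultaneously \emph{quantitative} (displacement uniformly bounded) and \emph{localized} near the bounded set $\mathcal{S}$, so that the deformed path stays admissible while only its controlled high part — which Step 2 confines to $\overline{B_{K}(0)}$ — gets pushed below $c(\lambda)$; keeping the constants $K$, $\delta$, $\alpha$ and the size of the cut-off neighborhood mutually compatible, and dealing with the borderline case $I_{\lambda}(0)=c(\lambda)$ or $I_{\lambda}(v_{0})=c(\lambda)$, is where essentially all of the difficulty lies. Steps 1 and 2 are routine, relying only on $B\geq 0$, Lebesgue differentiation, and the coercivity alternative ``$A\to\infty$ or $B\to\infty$''.
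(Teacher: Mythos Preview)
The paper does not prove this theorem at all: it is stated in Section~3 as a result ``recalled from \cite{Jeanjean1}'' and used as a black box. Your proposal correctly reconstructs Jeanjean's original monotonicity-trick argument (a.e.\ differentiability of the nonincreasing map $\lambda\mapsto c_\lambda$, uniform boundedness of the ``high part'' of almost-optimal paths via the difference quotient, then a localized quantitative deformation), so in substance you are supplying precisely the proof the paper outsources.

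One remark: the hypothesis in the paper is written as $c_\lambda\geq\max\{I_\lambda(0),I_\lambda(v_0)\}$, which is vacuous since any $\gamma\in\Gamma$ passes through $0$ and $v_0$; Jeanjean's theorem actually assumes the strict inequality $c_\lambda>\max\{I_\lambda(0),I_\lambda(v_0)\}$, and this is what is verified later in the paper (the mountain-pass geometry $I_\lambda\geq\nu>0$ on $\partial B_\rho$ with $I_\lambda(0)=0$, $I_\lambda(v_0)<0$). Your Step~3 relies on being able to separate the endpoints from the level $c(\lambda)$ in order to keep them fixed under the deformation; the ``borderline case'' you allude to cannot in fact be ``handled separately'' without the strict inequality, so you should simply assume it, as Jeanjean does and as the paper's application guarantees.
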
 
\begin{corollary}\label{corollary montain pass}
Let $E, I_{\lambda}$ be as in Theorem \ref{theorem jeanjean} and we assume that both $B$ and $B^{'}$ take bounded sets to bounded sets. Suppose in addition that for all $\lambda\in [1/2,1]$, all bounded Palais-Smale sequences admit a convergence subsequence. Then there exists a subsequence $\{(\lambda_{n}, u_{n})\}\subset [1/2,1]\times E$ with $\lambda_{n}\to 1$, $I_{\lambda_{n}}(u_{n})= c_{\lambda_{n}}$, $I^{'}_{\lambda_{n}}(u_{n})= 0$. Moreover, if $\{u_{n}\}$ is bounded then 
$$I_{\lambda_{n}}(u_{n})\to \lim_{n\to \infty} c_{\lambda_{n}}= c_{1}  \hspace*{0.5cm} I^{'}_{\lambda_{n}}(u_{n})\to 0 \; \; \text{ in } E^{*}.$$
\end{corollary}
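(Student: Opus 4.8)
The plan is to deduce the corollary from Theorem~\ref{theorem jeanjean} by an extraction argument, the only genuinely new ingredient being the left-continuity of $\lambda\mapsto c_\lambda$ at $\lambda=1$.

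First I would invoke Theorem~\ref{theorem jeanjean}: there is a full-measure set $\mathcal{S}\subset[1/2,1]$ such that for every $\lambda\in\mathcal{S}$ the functional $I_\lambda$ admits a bounded Palais--Smale sequence at level $c_\lambda$. Since $\mathcal{S}$ is dense in $[1/2,1]$, choose $\lambda_n\in\mathcal{S}$ with $\lambda_n\nearrow 1$. For each fixed $n$, let $\{v_m^n\}_{m}$ be a bounded PS sequence for $I_{\lambda_n}$ at level $c_{\lambda_n}$, i.e. $I_{\lambda_n}(v_m^n)\to c_{\lambda_n}$ and $\|I'_{\lambda_n}(v_m^n)\|_{E^{\ast}}\to 0$ as $m\to\infty$. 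By the standing hypothesis that bounded PS sequences admit convergent subsequences, pass to a subsequence with $v_m^n\to u_n$ in $E$. Because $I_{\lambda_n}\in C^1(E,\mathbb{R})$, letting $m\to\infty$ gives $I_{\lambda_n}(u_n)=c_{\lambda_n}$ and $I'_{\lambda_n}(u_n)=0$, which already produces the sequence $\{(\lambda_n,u_n)\}$ with the asserted properties and $\lambda_n\to 1$.

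Next I would establish $c_{\lambda_n}\to c_1$. Since $B\ge 0$, for $\lambda'\le\lambda$ one has $I_{\lambda'}(u)\ge I_\lambda(u)$ for all $u$, so $\lambda\mapsto c_\lambda$ is non-increasing and $c_\lambda\ge c_1$ whenever $\lambda\le 1$; in particular $\{c_{\lambda_n}\}$ is non-increasing and bounded below by $c_1$, hence converges to some $\ell\ge c_1$. For the reverse bound, fix $\varepsilon>0$ and pick $\gamma\in\Gamma$ with $\max_{t\in[0,1]}I_1(\gamma(t))\le c_1+\varepsilon$. Writing $I_\lambda(\gamma(t))=I_1(\gamma(t))+(1-\lambda)B(\gamma(t))$ and using that $B$ is continuous, hence bounded on the compact set $\gamma([0,1])$ by some $M_\gamma$, we get $c_\lambda\le\max_t I_\lambda(\gamma(t))\le c_1+\varepsilon+(1-\lambda)M_\gamma$; letting $\lambda\to 1^-$ and then $\varepsilon\to 0$ yields $\ell\le c_1$. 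Thus $\ell=c_1$ and $I_{\lambda_n}(u_n)=c_{\lambda_n}\to c_1$.

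Finally, suppose $\{u_n\}$ is bounded in $E$. Then $I_{\lambda_n}(u_n)=c_{\lambda_n}\to c_1$ and $I'_{\lambda_n}(u_n)=0$, giving the last two displayed conclusions directly; moreover, using that $B$ and $B'$ take bounded sets to bounded sets, the identities $I_1(u_n)=I_{\lambda_n}(u_n)+(\lambda_n-1)B(u_n)$ and $I'_1(u_n)=I'_{\lambda_n}(u_n)+(\lambda_n-1)B'(u_n)$ show $I_1(u_n)\to c_1$ and $\|I'_1(u_n)\|_{E^{\ast}}\to 0$, so $\{u_n\}$ is a bounded PS sequence for $I_1$ at level $c_1$, which by the compactness hypothesis again furnishes a critical point of $I_1$ at level $c_1$. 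I expect the step needing the most care to be the left-continuity $c_{\lambda_n}\to c_1$, since it is where the structural assumption $B\ge 0$ and the compactness of the admissible paths $\gamma([0,1])$ must be combined; the rest is a direct extraction from Theorem~\ref{theorem jeanjean} together with the two standing hypotheses on $B$, $B'$ and on bounded PS sequences.
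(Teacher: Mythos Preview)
The paper does not supply its own proof of this corollary: it is simply quoted, together with Theorem~\ref{theorem jeanjean}, from \cite{Jeanjean1} as a preliminary result, so there is no in-paper argument to compare against. Your proof is correct and is essentially the standard deduction one finds in Jeanjean's original work: extract critical points $u_n$ at the ``good'' parameters $\lambda_n\nearrow 1$ using the bounded-PS compactness hypothesis, then use monotonicity and left-continuity of $\lambda\mapsto c_\lambda$ (the latter via $I_\lambda=I_1+(1-\lambda)B$ on a fixed path) to conclude. The final paragraph, where you upgrade $\{u_n\}$ to a bounded PS sequence for $I_1$ at level $c_1$ using that $B,B'$ map bounded sets to bounded sets, is exactly how the hypothesis on $B,B'$ is meant to be used and makes the role of that assumption transparent.
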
        
\section{Proof of the main results}
In this section we will give the proofs of our main results. 

\subsection{Proof of a priori bounds}   
\begin{proof}[Proof of Theorem \ref{a priori estimate}]
As we have seen in Lemma \ref{regul}, every weak solution of $(Q_{\lambda})$ belong to $L^{\infty}(\Omega)$. 
Now, let us firstly show that there exists a positive constant $C$ such that
 \begin{equation}
\Vert u \Vert_{L^{\infty}(\Omega_{+})}<C.     
 \end{equation}
 For that we arguing by contradiction. We assume that there exist a sequences $\{\lambda_{n}\}\subset[1/2,1]$ and  $\{u_{n}\}$ nonnegative solutions of $(Q_{\lambda})$ for $\lambda=\lambda_{n}$ such that $\Vert u_{n} \Vert_{L^{\infty}(\Omega_{+})}\to +\infty$  as $n\to +\infty$. Let $\{x_{n}\}\subset\overline{\Omega}_{+}$ be such that
 $u_{n}(x_{n})= \Vert u_{n} \Vert_{L^{\infty}(\Omega_{+})}$. Passing to a sub-sequences we may assume that $\lambda_{n}\to \lambda\in [1/2, 1]$ and $x_{n}\to \Bar{x}\in \overline{\Omega}_{+}$. Therefore, from now we will distinguish the following two cases:  
 \begin{itemize} 
     \item \textbf{ \underline{Case 1}} : $\Bar{x}\in\overline{\Omega}_{+}\cap\Omega$.  
     \item \textbf{ \underline{Case 2}} : $\Bar{x}\in\overline{\Omega}_{+}\cap\partial\Omega$.
 \end{itemize}
 Through this proof we denote $B_{kR}=B_{kR}(\Bar{x})$ where $k\in \mathbb{N}$.
 
 \textbf{ \underline{Case 1}} :  From the assumption \eqref{hypothse} there exists a $R>0$ such that $a^{-}(x)=0$ in $B_{4R}\subset\Omega$ and $a^{+}(x)\gneqq 0$ in $B_{R}$.
 
 If $u_{n}<1$ on $B_{4R}$, there is nothing to prove because we get directly a contradiction. Let us assume that $u_{n}\geq 1$ on $B_{4R}$.\\
 We claim that there exist $R>0$ and $C>0$ such that 
 \begin{equation}\label{eq0}
\inf_{B_{R}} u_{n}<C.     
 \end{equation}
 Otherwise, it is easy to see that 
 \begin{equation} \label{eq6}
-\Delta_{p}u_{n}\geq \lambda a^{+}(x)f(u_{n})-b^{-}(x)g(u_{n})  \; \; \mbox{ in } B_{4R}.     
 \end{equation}
However, from \eqref{limg2} there exists a positive constant $C_{1}$ such that
\begin{equation}\label{eq10}
g(u_{n})\leq C_{1} u^{p-1}_{n} \; \; \text{ in } B_{4R},    
\end{equation}
 it follows from \eqref{eq6} that
 \begin{equation}\label{eq7}
-\Delta_{p}u_{n}+C_{1}b^{-}(x)u^{p-1}_{n} \geq \lambda a^{+}(x)f(u_{n})  \; \; \mbox{ in } B_{4R}.     
 \end{equation}
 Hence, from Lemma \ref{keylemma}, there exists a positive constant $C_{2}$ such that 
$$
\left(\inf_{B_{R}} \frac{u_{n}(x)}{d(x,\partial B_{4R})}\right)^{p-1}\geq \frac{C_{2}}{2} \int_{B_{R}}a^{+}(y)f(u_{n})dy,$$
 which imply
 $$ 1\geq C_{3} \int_{B_{R}}a^{+}(y)\frac{f(u_{n})}{\left( \inf_{B_{R}} u_{n}(x)\right)^{p-1}} dy,$$
then
\begin{equation}\label{eq8}
1\geq C_{3} \int_{B_{R}}a^{+}(y)\frac{f(u_{n})}{u^{p-1}_{n}(y)} dy.   
\end{equation}
Now, if $\inf_{B_{R}} u_{n}(x)\to +\infty \; \; \text{ as } n\to \infty.$ Hence from \eqref{limf2} we obtain $$\int_{B_{R}}a^{+}(y)\frac{f(u_{n})}{ u^{p-1}_{n}(x)} dy\to \infty  \; \; \text{ as } n\to \infty,$$
which contradict with \eqref{eq8}. Therefore, we get the claim. \\
In one hand, we obtain easily from \eqref{eq7} that  
\begin{equation}\label{eq9}
-\Delta_{p}u_{n}+C_{1}b^{-}(x)u^{p-1}_{n} \geq 0  \; \; \mbox{ in } B_{4R},     
 \end{equation}
 then from Lemma  \ref{whi} that, for any $q\in (0,\frac{N(p-1)}{N-p})$ there exists a positive constant $C_{4}$ such that 
 \begin{equation}\label{eqinf}
\left(\int_{B_{2R}}u_{n}^{q}dx\right)^{\frac{1}{q}}\leq C_{4} \inf_{B_{r}}u_{n}.     
 \end{equation}
On the other hand, we have 
\begin{equation}\label{eq11}
-\Delta_{p}u_{n}\leq  a^{+}(x)f(u_{n})+b^{+}(x)g(u_{n}) \; \; \mbox{ in } B_{4R}.      
\end{equation}
Since $f$ is regulary varying of index $p-1$, then,  from Proposition \ref{prop RV}, we have for all $\eta>0$ there exists a positive constant $C_{5}$, which depends on $\eta$, such that  $f(u_{n})\leq C_{5} u^{p-1+\eta}_{n}$. Furthermore,  by \eqref{eq10} and \eqref{eq11}  we get that  
\begin{equation}\label{eq12}
-\Delta_{p}u_{n}\leq  d_{n}(x)u^{p-1}_{n} \; \; \mbox{ in } B_{4R},      
\end{equation}
where $d_{n}(x)= C_{5} a^{+}(x)u^{\eta}_{n} + C_{1}b^{+}(x)$. Next, let show that $d_{n}$ is bounded  in $L^{\beta}(B_{2R})$, where $\beta>\frac{N}{p}$. For that we choose $\eta\in (0, \frac{q}{k}(\frac{pk}{N}-1))$  and we set $\beta=\frac{kq}{q+\eta k}$. Hence by using H\"{o}lder inequality we get 
\begin{align*}
\int_{B_{2R}} (d_{n}(x))^{\beta} dx &\leq C_{6}\left( \int_{B_{2R}} (a^{+}(x)u^{\eta}_{n})^{\beta}dx + \int_{B_{2R}} (b^{+}(x))^{\beta}dx \right)\\
&\leq C_{6}\left(\int_{B_{2R}} (a^{+}(x))^{k}dx\right)^{\beta/k} \left( \int_{B_{2R}} u^{q}_{n}dx\right)^{\beta\eta/q} + C_{7}\left( \int_{B_{2R}} (b^{+}(x))^{k}dx \right)^{\beta/k}.
\end{align*}
By using \eqref{eq0} and \eqref{eqinf} we obtain that 
$$  
\int_{B_{2R}} (d_{n}(x))^{\beta} dx\leq C_{8}. 
$$
Therefore, from Lemma \ref{lmp}, for any $\gamma\in (0,p)$ there exists $C_{9}$ a positive constant such that 
\begin{equation}\label{eqsup}
\sup_{B_{R}} u_{n} \leq C_{9} \left(\int_{B_{2R}}u_{n}^{\gamma}dx\right)^{\frac{1}{\gamma}}.
\end{equation}
By choosing $\gamma=q=p-1$ and by combining  \eqref{eq0}, \eqref{eqinf} and \eqref{eqsup} we deduce that 
$$\sup_{B_{R}} u_{n} \leq C_{10},$$
which imply a contradiction. \\

\textbf{ \underline{Case 2}} :  From the assumption \eqref{hypothse} there exists a $R\in (0, R^{'}/2]$, where $R^{'}$ is given from Lemma \ref{bwhi}, and $\omega\subset\Omega$ with $\partial\omega$ of class $C^{1,1}$ such that $B_{2R}\cap\Omega\subset\omega$,  $a^{-}(x)=0$ and  $a^{+}(x)\gneqq 0$ in $\omega$.

As in the case 1, if $u_{n}<1$ on $\omega$, there is nothing to prove because we get directly a contradiction.  We assume that $u_{n}\geq 1$ on $\omega$.\\ 
Next, let us prove that there exists a positive constant $C$ such that 
\begin{equation}\label{eqinfbound}
\inf_{B_{2R}\cap\omega} \frac{u_{n}(x)}{d(x,\partial\omega)}<C.    
\end{equation}
Let $\Bar{R}$ and $z\in \Omega$ be such that $B_{4\Bar{R}}(z)\subset B_{2R}\cap\omega$ and $a^{+}(x)\gneqq 0$ in $B_{\Bar{R}}$, by using the same arguing in the claim of the case 1, there exists $C_{1}$ a positive constant such that 
\begin{equation}\label{eqinf2}
\inf_{B_{\Bar{R}}} u_{n}(x)<C_{1}.    
\end{equation}
Moreover, since $B_{4\Bar{R}}(z)\subset B_{2R}\cap\omega$, we have 
$$
\inf_{B_{2R}\cap\omega} \frac{u_{n}(x)}{d(x,\partial\omega)}\leq\inf_{B_{\Bar{R}}(z)} \frac{u_{n}(x)}{d(x,\partial\omega)}\leq \frac{1}{\Bar{R}} \inf_{B_{\Bar{R}}(z)} u_{n}(x),
$$
then from \eqref{eqinf2} we deduce directly \eqref{eqinfbound}.  On the other hand, from \eqref{eq7} we have that   
$$
-\Delta_{p}u_{n}+C_{2}b^{-}(x)u^{p-1}_{n} \geq 0  \; \; \mbox{ in } \omega,     
$$
thus from Lemma \ref{bwhi} and the inequality \eqref{eqinfbound} there exist $\epsilon>0$ and $C_{3}>0$ such that
\begin{equation}
\left(\int_{B_{2R}\cap\omega}\left(\frac{u(x)}{d(x,\partial\omega)}\right)^{\epsilon}dx\right)^{\frac{1}{\epsilon}}\leq C_{3},    
\end{equation} 
since  $d(x,\partial\omega)<\text{diam}(\Omega)$, it follows that  
\begin{equation}\label{intlast}
\left(\int_{B_{2R}\cap\omega}\left(u(x)\right)^{\epsilon}dx\right)^{\frac{1}{\epsilon}}\leq C_{3} \text{ diam}(\Omega).    
\end{equation}  
However, we can have as in case 1 that 
$$
-\Delta_{p}u_{n}\leq  d_{n}(x)u^{p-1}_{n}  \; \; \mbox{ in } \omega,      
$$
where $d_{n}(x)=  C_{4} a^{+}(x)u^{\delta}_{n} + C_{5}b^{+}(x)$, for any $\delta>0$. In addition, for this case we choose $\delta\in (0, \frac{\epsilon}{k}(\frac{pk}{N}-1))$  and we set $s_{1}=\frac{k\epsilon}{\epsilon+\delta k}>\frac{N}{p}$. Then we can prove as above that $d_{n}$ is bounded in  $L^{s_{1}}(B_{2R}\cap\omega)$. Therefore from Lemma \ref{blmp}, for any $\beta\in (0,p]$ there exists a positive constant  $C_{6}$, such that
\begin{equation}\label{suplast}
\sup_{B_{R}\cap\omega} u_{n} \leq C_{6} \left(\int_{B_{2R}\cap\omega}u_{n}^{\beta} dx\right)^{\frac{1}{\beta}}.    
\end{equation}
Hence by choosing $\beta=\epsilon$. We combine \eqref{intlast} and \eqref{suplast} then we get
$$
\sup_{B_{R}\cap\omega} u_{n} \leq C_{7},  
$$
which imply the contradiction that we are looking for in this case.

Next, let us show that there is a positive $C>0$ such that 
\begin{equation}\label{priori_cont}
\Vert u \Vert_{L^{\infty}(\Omega\setminus\overline{\Omega}_{+})}<C.     
 \end{equation}
We set $K=\Omega\setminus\overline{\Omega}_{+}$. We have that
$$
-\Delta_{p}u=  -a^{-}(x)f(u)+b(x)g(u)  \; \; \text{ in } K.      
$$
We define  $z= u-\sup_{\partial K}u$, then we get that 
$$ 
-\Delta_{p}z\leq  b^{+}(x)g(z+ \sup_{\partial K}u)  \; \; \text{ in } K.      
$$
From the assumption \eqref{limg2}, we can easily have  that
$$ 
-\Delta_{p}z\leq C_{1} b^{+}(x)(1 + z^{p-1})  \; \; \mbox{ in } K,      
$$
where $C_{1}$ is a positive constant. Therefore $z$ is a sub-solution of the following problem 
\begin{equation}\label{probpart2} 
 \begin{cases}                                                             
    -\Delta_{p}v= C_{1} b^{+}(x)(1 + v^{p-1})  & \ \ \mbox{ in } K,\\                                                  
    v=0 & \ \ \mbox{ on }   \partial K,                                         
    \end{cases}
\end{equation} 
where $v\in W_{0}^{1,p}(K)\cap L^{\infty}(K)$. From {\cite[Theorem  IX-22]{Ladyzhenskaya}} we have $v\in C^{0,\alpha}(\Bar{K})$.  
Thus, from the weak comparison principle in  {\cite[Theorem 3.1]{Coster22}}  we get that 
\begin{equation}\label{eqpartial} 
    u\leq v+\sup_{\partial K}u  \; \text{ in } \; K.
\end{equation}
However, from  {\cite[Theorem 1.2]{Guedda}} there exists a positive constant $C_{2}$ such that $$\Vert v\Vert_{L^{p}(K)}\leq C_{2}.$$
Then, by using Lemma \ref{lmp}  we deduce directly that there exists a positive constant $C_{3}$ such that   
$$\sup_{ K} v \leq C_{3}.$$
Hence, from \eqref{eqpartial}, we obtain that
$$
u\leq C_{3}+\sup_{\Omega_{+}}u  \; \text{ in } \; K,
$$
which imply \eqref{priori_cont}, hence the end of the proof.  
\end{proof}
\subsection{Proof of the multiplicity of solutions}
One of the most fruitful ways to deal with problem $(Q_{\lambda})$, for any $\lambda\in [1/2,1]$, is the variational method, which takes into account that the weak
solutions of $(Q_{\lambda})$ are critical points in  $W_{0}^{1,p}(\Omega)$ of the following functional 
\begin{equation}\label{variational formula}
    I_{\lambda}(u)= \frac{1}{p}\int_{\Omega}\vert\nabla u \vert^{p}-\int_{\Omega} (\lambda a^{+}(x)-a^{-}(x))F(u)- \int_{\Omega} b(x) G(u),   
   \end{equation}
such that $F(s)=\int_{0}^{s}\tilde{f}(s)ds$ and $G(s)=\int_{0}^{s}\tilde{g}(s)ds$, where  $$\tilde{f}(s)= \begin{cases}                                                             
     f(s)& \ \ \mbox{ if } s\geq 0,\\                                                  
    0 & \ \ \mbox{ if }   s< 0,                                                   
    \end{cases}       \hspace{1.5cm} \tilde{g}(s)= \begin{cases}                                                             
     g(s)& \ \ \mbox{ if } s\geq 0,\\                                                  
    g(0) & \ \ \mbox{ if }   s< 0.                                                   
    \end{cases}$$
 
Before we start to proof our main results. It is worth to mention that we will use the same method used in \cite{Chile} where the proof depend on an abstract result due to \cite{Jeanjean1}. This method is based on embedding the functional $I$ into a one-parameter family of functionals $I_{\lambda}$, with $\lambda\in [1/2,1]$, such that $I_{1}=I$. On the other hand, the a priori estimates obtained in the last section will help us to get easily the boundness of the Palais-Smale sequence for $I_{\lambda}$, for all $\lambda\in [1/2,1]$. 


The proof of our main results is as follows. In the first step, we show
the existence of the first critical point for the $I_{1}$ by using the Corollary \ref{corollary montain pass}. Precisely, we show that the functional $I_{1}$
satisfies the Palais-Smale condition at the level $c_{1}>0$. In the
second step, we show the existence of the second critical point
of $I_{1}$ on $B_{\rho}(0)$ (which is a local minimum) by using the lower
semicontinuity argument. Finally, we will prove that these critical points are not the same. 

In the following lemma we prove the strongly convergence of Palais-Smale sequence for $I_{\lambda}$.   
\begin{lemma}\label{convergence Palais-Smale}
Under the assumptions \eqref{hypothse} \eqref{RVq}, \eqref{limg2}, then, for all $\lambda\in [1/2,1]$, any bounded Palais-Smale sequence for $I_{\lambda}$ defined by \eqref{variational formula} admits a convergent subsequence. 
\end{lemma}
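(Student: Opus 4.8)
The plan is to follow the standard $(S_+)$-property argument for the $p$-Laplacian. Let $\{u_n\}$ be a bounded Palais-Smale sequence for $I_\lambda$. First I would extract a subsequence along which $u_n \rightharpoonup u$ weakly in $W_0^{1,p}(\Omega)$; by the compact Sobolev embedding $W_0^{1,p}(\Omega) \hookrightarrow L^q(\Omega)$ for $q < p^* = \frac{Np}{N-p}$, we may assume $u_n \to u$ strongly in $L^q(\Omega)$ for all such $q$ and $u_n \to u$ a.e. in $\Omega$. Since $\|I_\lambda'(u_n)\|_* \to 0$ and $\{u_n\}$ is bounded, we have $\langle I_\lambda'(u_n), u_n - u\rangle \to 0$, i.e.
\begin{equation}\label{PS-pairing}
\int_\Omega |\nabla u_n|^{p-2}\nabla u_n \cdot \nabla(u_n - u)\,dx = \langle I_\lambda'(u_n), u_n-u\rangle + \int_\Omega (\lambda a^+ - a^-)\tilde f(u_n)(u_n - u)\,dx + \int_\Omega b\, \tilde g(u_n)(u_n-u)\,dx.
\end{equation}
The first term on the right tends to $0$. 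The plan for the remaining two terms is to show they also vanish. Using Proposition \ref{prop RV} (choosing $\delta$ small so that $p-1+\delta < p^*-1$) together with the growth bound $\tilde g(s) \leq C(1+s^{p-1})$ coming from \eqref{limg2} and the continuity of $g$, the functions $\tilde f(u_n)$ and $\tilde g(u_n)$ are bounded in $L^{s'}(\Omega)$ for a suitable $s' $ with $s'(p-1+\delta) \le p^*$ resp.\ $s'(p-1)\le p^*$; since the coefficients $\lambda a^+ - a^-$ and $b$ lie in $L^k(\Omega)$ with $k > N/p$ (here using $b^- \in L^\infty$, $b^+ \in L^k$), and $u_n - u \to 0$ strongly in $L^q$ for $q<p^*$, Hölder's inequality with the three exponents $k$, $s'$, $q$ (checking $\tfrac1k + \tfrac1{s'} + \tfrac1q \le 1$ is achievable because $k>N/p$ leaves room) shows both integrals tend to $0$. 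Hence the left-hand side of \eqref{PS-pairing} tends to $0$.

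Next I would invoke the well-known $(S_+)$-property of the negative $p$-Laplacian: the operator $A : W_0^{1,p}(\Omega) \to W^{-1,p'}(\Omega)$ defined by $\langle A(v), w\rangle = \int_\Omega |\nabla v|^{p-2}\nabla v \cdot \nabla w$ satisfies: if $u_n \rightharpoonup u$ and $\limsup_n \langle A(u_n), u_n - u\rangle \le 0$, then $u_n \to u$ strongly in $W_0^{1,p}(\Omega)$. Since $\langle A(u) , u_n - u\rangle \to 0$ by weak convergence, we get $\langle A(u_n) - A(u), u_n - u\rangle \to 0$, and the strict monotonicity inequality for $|\xi|^{p-2}\xi$ (Simon's inequality) upgrades this to $\nabla u_n \to \nabla u$ in $L^p(\Omega)^N$, i.e.\ $u_n \to u$ in $W_0^{1,p}(\Omega)$. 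This is the desired convergent subsequence.

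The main obstacle is the bookkeeping in the second paragraph: one must verify that the exponents fit, i.e.\ that the integrability afforded by $a, b \in L^k$ with $k > N/p$ combined with the subcritical growth of $f$ (via the free parameter $\delta$ in Proposition \ref{prop RV}) and of $g$ leaves enough room for the Hölder triple to close with a factor $u_n - u$ converging in some $L^q$, $q < p^*$. Concretely one picks $\delta$ so small that $\frac{p-1+\delta}{p^*} + \frac1k < 1$; this is possible precisely because $k > N/p$ forces $\frac1k < \frac{p}{N} = 1 - \frac{p-1}{p^*}\cdot\frac{p^*}{p^*}$-type slack, and then $q$ is chosen to absorb the leftover. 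Everything else — the weak/strong/a.e.\ extraction and the $(S_+)$-property — is standard and can be cited. No sign conditions on $a$ or $b$ are needed here since only growth bounds enter.
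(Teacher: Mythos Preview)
Your proposal is correct and follows essentially the same route as the paper: extract a weakly convergent subsequence, use the growth bounds from Proposition~\ref{prop RV} and \eqref{limg2} together with a three-term H\"older inequality (with $a,b\in L^k$, $k>N/p$, providing the needed slack) to kill the nonlinear terms in $\langle I_\lambda'(u_n),u_n-u\rangle$, and then invoke the $(S_+)$ property of $-\Delta_p$ (the paper cites {\cite[Theorem 10]{Dinca}} for this last step). The paper makes the exponent choice explicit, taking $\delta\in\bigl(0,(\tfrac{p}{N}-\tfrac{1}{k})p^*\bigr)$ and $s\in\bigl(1,\tfrac{p^*}{p-1+\delta}\bigr)$, which is exactly the bookkeeping you sketch.
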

\begin{proof}
 Let $\{u_{n}\}\subset W_{0}^{1,p}(\Omega)$ be a bounded Palais-Smale sequence  for $I_{\lambda}$ at level $c>0$.  Then there exists a subsequence denoted again by $\{u_{n}\}$ such that $u_{n}$ converges weakly to $u$ in $W_{0}^{1,p}(\Omega)$, strongly in $L^{r}(\Omega)$ with $r\in [1,p^{*})$ and almost everywhere in $\Omega$. We have that 
 \begin{align*}
 \langle I^{\prime}(u_{n}), u_{n}-u \rangle = \int_{\Omega}\vert\nabla u_{n} \vert^{p-2}\nabla u_{n} \nabla (u_{n}-u)dx -\int_{\Omega} a_{\lambda}(x)\tilde{f}(u_{n})(u_{n}-u)dx - \int_{\Omega} b(x)\tilde{g}(u_{n})(u_{n}-u)dx.   
 \end{align*}
 Let $\delta \in (0, (\frac{p}{N}-\frac{1}{k})p^{*})$, $s\in (1,\frac{p^{*}}{p-1+\delta})$ and $r\in (1,p^{*})$ such that $\frac{1}{k}+\frac{1}{s}+\frac{1}{r}=1$. From Proposition \ref{prop RV}, H\"{o}lder's inequality and Sobolev's embedding, we have 
 \begin{align*}
 \vert \int_{\Omega} a_{\lambda}(x)\tilde{f}(u_{n})(u_{n}-u)dx \vert &\leq  \int_{\Omega} \vert  a_{\lambda}(x)\tilde{f}(u_{n})(u_{n}-u)\vert dx  \\
 & \leq \Vert a\Vert_{k} \Vert \tilde{f}(u_{n}) \Vert_{s}\Vert u_{n}-u \Vert_{r}\\
 &\leq C_{1} \Vert a\Vert_{k} \left(1+ \Vert u_{n}\Vert^{p-1+\delta}_{(p-1+\delta)s}\right) \Vert u_{n}-u \Vert_{r}\\
 &\leq C_{2} \Vert a\Vert_{k} \left(1+ \Vert u_{n}\Vert^{p-1+\delta}\right) \Vert u_{n}-u \Vert_{r}\\
 \end{align*}
 Hence, 
 $$ \int_{\Omega} a_{\lambda}(x)\tilde{f}(u_{n})(u_{n}-u)dx\to 0 \; \;  \text{ as } n\to \infty.$$
 Form the assumption \eqref{limg2}, there $C_{3}$ a positive constant such that 
 $$\vert\tilde{g}(u_{n})\vert\leq C_{3}(1+\vert u \vert^{p-1}_{n}).$$
 By using the same argument we show easily that 
 $$ \int_{\Omega} b(x)\tilde{g}(u_{n})(u_{n}-u)dx\to 0 \; \;  \text{ as } n\to \infty.$$
 Since $\langle I^{\prime}(u_{n}), u_{n}-u \rangle\to 0$ as $n\to \infty$. Thus, we deduce that 
 $$ \int_{\Omega}\vert\nabla u_{n} \vert^{p-2}\nabla u_{n} \nabla (u_{n}-u)dx \to 0  \text{ as } n\to \infty.$$ 
 Therefore, by using {\cite[Theorem 10]{Dinca}}, we conclude that $u_{n}$ converges strongly to $u$ in $W_{0}^{1,p}(\Omega)$.
\end{proof}  
In the following lemma we show the nonnegativity of weak solutions of  $(Q_{\lambda})$.
\begin{lemma}\label{positivity}
Under the assumptions \eqref{hypothse}, \eqref{RVq}, \eqref{limf1} (or \eqref{limf3}), and \eqref{limg2}.
\begin{enumerate}
    \item If $g(0)=0$. Then, every  weak solution of $(Q_{\lambda})$ is nonnegative.
    \item If $g(0)>0$ and $b\gneqq 0$. Then, every  weak solution of $(Q_{\lambda})$ is positive.
\end{enumerate} 
\end{lemma}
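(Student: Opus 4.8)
The plan is to derive both parts by testing the weak formulation of $(Q_\lambda)$ against the negative part $u^{-}:=\min(u,0)\le 0$, which is admissible since $u\in W_0^{1,p}(\Omega)$ forces $u^{-}\in W_0^{1,p}(\Omega)$, with $\nabla u^{-}=\nabla u$ on $\{u<0\}$ and $\nabla u^{-}=0$ elsewhere. Throughout I recall that $f,g$ are extended to $s<0$ by $f(s)=0$, $g(s)=g(0)$, that by Lemma~\ref{regul} every weak solution $u$ of $(Q_\lambda)$ lies in $W_0^{1,p}(\Omega)\cap L^{\infty}(\Omega)\cap C^{0,\alpha}(\overline{\Omega})$, and that $f(0)=0$ (by continuity together with \eqref{limf1}, resp.\ \eqref{limf3}). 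For part~1 I would note that on $\{u<0\}$ the right-hand side $(\lambda a^{+}-a^{-})f(u)+bg(u)$ vanishes, since $f(u)=0$ and $g(u)=g(0)=0$ there; plugging $\varphi=u^{-}$ into the weak formulation then gives $\int_\Omega|\nabla u^{-}|^{p}\,dx=\int_{\{u<0\}}\big[(\lambda a^{+}-a^{-})f(u)+bg(u)\big]u^{-}\,dx=0$, so $u^{-}\equiv 0$ by Poincar\'e, i.e.\ $u\ge 0$. For part~2 we have $g(0)>0$ and $b\gneqq 0$, hence $b\ge 0$ a.e.; then on $\{u<0\}$ the right-hand side equals $b(x)g(0)\ge 0$, and testing with $u^{-}\le 0$ gives $\int_\Omega|\nabla u^{-}|^{p}\,dx=\int_{\{u<0\}}b(x)g(0)u^{-}\,dx\le 0$, whence $u\ge 0$ again; moreover $u\not\equiv 0$, since $u\equiv 0$ would force $0=(\lambda a^{+}-a^{-})f(0)+b\,g(0)=b(x)g(0)$ a.e., contradicting $b\not\equiv 0$ and $g(0)>0$.

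It then remains to upgrade $u\ge 0$, $u\not\equiv 0$ to $u>0$ in $\Omega$. Under \eqref{limf1}, since $f(s)/s^{p-1}\to 0$ as $s\to 0^{+}$ and $f$ is bounded on $[0,\|u\|_{L^{\infty}(\Omega)}]$, there is $C>0$ with $0\le f(u)\le C\,u^{p-1}$ a.e.\ in $\Omega$; consequently, for every $\varphi\in W_0^{1,p}(\Omega)$ with $\varphi\ge 0$,
$$\int_\Omega|\nabla u|^{p-2}\nabla u\cdot\nabla\varphi\,dx+\int_\Omega Ca^{-}(x)u^{p-1}\varphi\,dx=\int_\Omega\big[\lambda a^{+}(x)f(u)+b(x)g(u)+a^{-}(x)\big(Cu^{p-1}-f(u)\big)\big]\varphi\,dx\ge 0,$$
i.e.\ $u$ is a nonnegative super-solution of \eqref{probpart} with $c:=Ca^{-}\in L^{k}(\Omega)$ and $h\equiv 0$. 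The weak Harnack inequality (Lemma~\ref{whi}) then yields, for every ball $B_{4R}(x_{0})\subset\Omega$ and a fixed $q\in(0,\frac{N(p-1)}{N-p})$, a constant $C$ uniform over such balls (since $\|c\|_{L^{k}(B_{4R}(x_{0}))}\le\|c\|_{L^{k}(\Omega)}$) with $\big(\int_{B_{2R}(x_{0})}u^{q}\,dx\big)^{1/q}\le C\inf_{B_{R}(x_{0})}u$. Now $\{u>0\}$ is open, nonempty, and $\Omega$ is connected; were $\{u>0\}\neq\Omega$, there would be $x_{0}\in\Omega$ with $u(x_{0})=0$ and $u\not\equiv 0$ on every small ball about $x_{0}$, and then (since $u(x_{0})=0$ makes $\inf_{B_{R}(x_{0})}u=0$) the displayed inequality would give $u\equiv 0$ on $B_{2R}(x_{0})$, contradicting the choice of $x_{0}$; hence $u>0$ in $\Omega$. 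When in addition $a,b\in L^{\infty}(\Omega)$ one could instead invoke the Brezis--Cabr\'e type Lemma~\ref{keylemma}, which even gives the sharper bound $u(x)\ge c\,d(x,\partial\Omega)^{1/(p-1)}$.

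The routine points above are the admissibility of $u^{-}$, the sign/vanishing of the extended nonlinearity on $\{u<0\}$, and the connectedness argument. The step I expect to be the main obstacle is the strict positivity under \eqref{limf3}: there $f$ is $p$-sublinear at $0$, i.e.\ $f(s)\sim l_{2}s^{r}$ with $r\in(0,p-1)$, so $f(s)/s^{p-1}\to+\infty$ and the absorption term $a^{-}f(u)$ can no longer be dominated by any $c(x)u^{p-1}$ with $c\in L^{k}$; since moreover $a^{-}$ is merely $L^{k}$, V\'azquez's strong maximum principle is not available globally and $u$ could a priori develop a ``dead core'' where the absorption dominates. To circumvent this I would exploit the thick-zero-set condition in \eqref{hypothse}: on $\{x\in\Omega:d(x,\Omega^{+})<\epsilon\}$ one has $a^{-}\equiv 0$, hence $-\Delta_{p}u\ge 0$ there, so $u$ is $p$-superharmonic and the weak-Harnack/connectedness argument above applies on that region (an interior zero would propagate to an open set on which $-\Delta_{p}u=b\,g(0)=0$, forcing $b\equiv 0$ locally); one then has to propagate positivity across $\{a^{-}>0\}$, where $u$ is still a nonnegative super-solution of \eqref{probpart} with $c\equiv 0$ and datum $h=-Ca^{-}\in L^{k}(\Omega)$ (using $f(u)\le C$), combining the weak Harnack estimate on small balls with the comparison Lemma~\ref{weak comparaison lemma}. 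Carrying out this chaining quantitatively while ruling out dead cores in the presence of the sublinear lower-order term is the delicate part.
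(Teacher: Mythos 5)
Your proof of part 1, of the nonnegativity and nontriviality in part 2, and of the strict positivity under \eqref{limf1} is essentially the paper's own argument: test with the negative part to obtain the identity corresponding to \eqref{non-negative}, then regard $u\geq 0$ as a supersolution of $-\Delta_{p}u+c(x)u^{p-1}=0$ with $c$ controlled in $L^{k}$ by $a^{-}$ and the $L^{\infty}$-bound on $u$, and run the weak Harnack inequality (Lemma \ref{whi}) plus connectedness to get the dichotomy $u\equiv 0$ or $u>0$, the trivial alternative being excluded by $b\,g(0)\gneqq 0$. The only differences are cosmetic: the paper writes the absorption as $d(x)u^{p-1}$ with $d=-C_{1}a^{-}(x)(1+u^{\delta})$ and invokes Theorem \ref{a priori estimate} (where you use Lemma \ref{regul}) to bound it in $L^{k}(\Omega)$, and it states the conclusion directly as ``either $u=0$ or $u>0$''; your boundary-of-$\{u>0\}$ phrasing of the connectedness step is equivalent. (A side remark: Lemma \ref{keylemma} would give $u\geq C\,d(x,\partial\Omega)$, not $d(x,\partial\Omega)^{1/(p-1)}$, but this aside plays no role in your argument.)

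The one place your write-up stops short of a proof is exactly the place you flag: strict positivity under \eqref{limf3}. There you only sketch a strategy (thick zero set, chaining weak Harnack with comparison across $\{a^{-}>0\}$) and explicitly leave the propagation unproved, so as it stands the proposal does not establish part 2 in the \eqref{limf3} case; relative to the full statement this is a genuine gap. You should be aware, however, that the paper does nothing different at that point: it uses the same pointwise domination $f(u)\leq C_{1}(1+u^{\delta})u^{p-1}$ for both \eqref{limf1} and \eqref{limf3} and then applies Lemma \ref{whi} exactly as above, without addressing the issue you raise — namely that under \eqref{limf3} one has $f(s)/s^{p-1}\to+\infty$ as $s\to 0^{+}$ (since $f(s)\sim l_{2}s^{r}$ with $r<p-1$), so this domination is unavailable precisely near the zero set of $u$, which is the set one is trying to show is empty. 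So your diagnosis of the obstacle is sound and your sketched repair goes beyond what the paper supplies; to close the gap you would either have to carry out that propagation argument in detail or use a Harnack-type inequality adapted to the sublinear absorption $a^{-}(x)u^{r}$, rather than rely on the displayed domination.
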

\begin{proof} 
\begin{enumerate}
    \item By multiplying the first equation of the problem $(Q_{\lambda})$ by the test function $-u^{-}$ and from the definition of $\tilde{f}$ and $\tilde{g}$, we get that 
\begin{equation}\label{non-negative}
   \int_{\Omega}\vert \nabla u^{-} \vert^{p}dx = - g(0) \int_{\Omega}b(x)u^{-}dx. 
\end{equation}
If $g(0)=0$. Thus,  $\int_{\Omega}\vert \nabla u^{-} \vert^{p}dx=0$ which imply that $u\geq 0$. 
\item Since $g(0)>0$ and $b\gneqq 0$, then, from \eqref{non-negative}  we obtain that  $\int_{\Omega}\vert \nabla u^{-} \vert^{p}dx=0$ which mean $u\geq 0$. Therefore,
$$
-\Delta_{p}u\geq  -a^{-}(x)f(u)  \; \; \text{ in } \Omega.      
$$
By using the Proposition \ref{prop RV} and the assumption \eqref{limf1}(or \eqref{limf3}), we have for all $\delta>0$ there exist a positive constant $C_{1}$ such that   
 \begin{align*}
-\Delta_{p}u\geq d(x)u^{p-1} \; \; \mbox{ in }\Omega, 
 \end{align*}
where $d(x)= -C_{1}a^{-}(x)(1 + u^{\delta})$. Thus, by using Theorem \ref{a priori estimate} we obtain that $d$ is bounded in $L^{k}(\Omega)$. Hence, from Lemma \ref{whi}, we deduce  either $u=0$ or $u>0$. Since $g(0)>0$ and $b\gneqq 0$ we conclude that $u>0$.  
\end{enumerate}    
\end{proof}
\subsubsection{Proof of the first multiplicity result}
\begin{proof}[Proof of Theorem \ref{maintheo1}]
Firstly, we obtain from \eqref{RVq} and \eqref{limf1} that, there exists $C_{1}$ a positive constant and for all small positive $\epsilon$ we have
\begin{equation}\label{f_growth}
 f(t)\leq \epsilon t^{p-1} +  C_{1} t^{p-1+\eta}  \hspace*{0.4 cm} \forall (t,\eta)\in\mathbb{R}_{+}\times(0,\infty),
\end{equation}
and from \eqref{limg2} that, there exist a positive constant $C_{2}$ such that
\begin{equation}\label{g_growth}
g(t)\leq C_{2}(1+ t^{p-1})  \hspace*{0.2 cm} \forall t\in\mathbb{R}_{+}.
\end{equation}
 Then from \eqref{f_growth} and \eqref{g_growth} we get that $I_{\lambda}\in C^{1}(W_{0}^{1,p}(\Omega),\mathbb{R})$, for any $\lambda\in [1/2,1]$, (see for example \cite{Dinca} page 356).
 On the other hand, to use the Theorem \ref{theorem jeanjean} we shall set 
 $$A(u) =  \frac{1}{p}\int_{\Omega}\vert\nabla u \vert^{p}+\int_{\Omega}a^{-}(x)F(u)- \int_{\Omega} b(x) G(u), $$
and
$$B(u)=\lambda \int_{\Omega} a^{+}(x)F(u).$$ 
We have $F$ is a nonnegative function, thus by using \eqref{g_growth}, H\"{o}lder's inequality and Sobolev's embedding we get
 \begin{align*}
A(u)&\geq \frac{1}{p}\int_{\Omega}\vert\nabla u \vert^{p}- \int_{\Omega} b(x)G(u)\\
&\geq \frac{1}{p}\Vert u \Vert^{p} - C_{2}( \Vert b^{+}\Vert_{L^{k}(\Omega)} \Vert u\Vert_{L^{k^{'}}(\Omega)} + \Vert b^{+}\Vert_{L^{k}(\Omega)} \Vert \vert u \vert^{p}\Vert_{L^{k^{'}}(\Omega)})\\
&\geq \Vert u \Vert^{p} \left( \frac{1}{p}- C_{3}\Vert b^{+}\Vert_{L^{k}(\Omega)}( \Vert u\Vert^{1-p} +  1)\right).
\end{align*}
Hence by choosing
\begin{equation}\label{b_constraint} 
\Vert b^{+}\Vert_{L^{k}(\Omega)} < \frac{1}{C_{3}p},    
\end{equation}
we obtain directly that $A(u)\to+\infty$ as $\Vert u\Vert\to +\infty$. 
Next, it is obvious that for all $u\in W_{0}^{1,p}(\Omega)$ we have $B(u)\geq 0$. In addition, $B$ and $B^{'}$ map bounded sets into bounded sets (see for example \cite{Dinca} page 355).    

Now, let us prove that $I_{\lambda}$ has has a geometrical structure. 
From \eqref{f_growth} and by using H\"{o}lder's inequality we get  that
$$\int_{\Omega}  a^{+}(x)F(u)\leq \epsilon  \Vert a^{+}\Vert_{L^{k}(\Omega)} \Vert \vert u\vert^{p}\Vert_{L^{k^{'}}(\Omega)}+ C_{1} \Vert a^{+}\Vert_{L^{k}(\Omega)} \Vert \vert u\vert^{p+\eta}\Vert_{L^{k^{'}}(\Omega)}.$$
Due to the assumption $k>N/p$, we choose $\eta>0$ small enough such that $(p+\eta)k^{'}<\frac{pN}{N-p}$. Hence, by using Sobolev's embedding we obtain 
\begin{equation}\label{Fequa}
\int_{\Omega}  a^{+}(x)F(u)\leq C_{4} \epsilon  \Vert a^{+}\Vert_{L^{k}(\Omega)} \Vert u\Vert^{p}+ C_{5} \Vert a^{+}\Vert_{L^{k}(\Omega)} \Vert u\Vert^{p+\eta}.  
\end{equation}
Furthermore, as above we get from \eqref{g_growth} that 
\begin{equation}\label{Gequa}
\int_{\Omega} b^{+}(x) G(u) \leq C_{3}\Vert b^{+}\Vert_{L^{k}(\Omega)}( \Vert u\Vert + \Vert u\Vert^{p}).
\end{equation} 
Since $F$ and $G$ are nonnegative thus from \eqref{Fequa} and \eqref{Gequa} we deduce that
\begin{align*} 
I_{\lambda}(u)&\geq \frac{1}{p}\int_{\Omega}\vert\nabla u \vert^{p}-\int_{\Omega}  a^{+}(x)F(u)- \int_{\Omega} b^{+}(x) G(u)\\
&\geq \left[ \frac{1}{p} -   \Vert a^{+}\Vert_{L^{k}(\Omega)}(C_{4}\epsilon + C_{5} \Vert u\Vert^{\eta})-  C_{3}\Vert b^{+}\Vert_{L^{k}(\Omega)} (\Vert u\Vert^{1-p} + 1) \right]\Vert u\Vert^{p}.    
\end{align*}
 Therefore, let $u\in \partial B_{\rho}(0)$ and by choosing $\rho$ small enough  we choose  
$$ \Vert b^{+}\Vert_{L^{k}(\Omega)}<\frac{1}{2p C_{3}(\rho^{1-p} + 1)},$$
we deduce $I_{\lambda}(u)\geq \nu >0$, for all $\lambda\in [1/2,1]$.

Next, we choose $y\in \Omega_{+}$ and $R>0$ such that $B_{R}(y)\subset\Omega_{+}$. Let  $v$ be a positive function that belongs to $C^{\infty}_{0}(B_{R}(y))$ such that $ a^{+} v\gneqq 0$. From the definition of $I_{\lambda}$, we have  
\begin{align*}
I_{\lambda}(tv)&= \frac{t^{p}}{p}\int_{\Omega}\vert\nabla v \vert^{p}-\int_{\Omega} (\lambda a^{+}(x)-a^{-}(x))F(tv)- \int_{\Omega} b(x) G(tv)\\
&=t^{p}\left( \frac{1}{p}\int_{B_{R}(y)}\vert\nabla v \vert^{p}-\lambda\int_{B_{R}(y)} a^{+}(x)\frac{F(tv)}{t^{p}v^{p}}v^{p}- \int_{B_{R}(y)} b(x) \frac{G(tv)}{t^{p}v^{p}}v^{p}\right).
\end{align*}
By using \eqref{limg2} there exists a positive constant $C_{6}$ such that 
$$\int_{B_{R}(y)}\vert b(x) \frac{G(tv)}{t^{p}v^{p}}v^{p}\vert \leq C_{6} \Vert b \Vert_{L^{k}(B_{R}(y))} \; \text{ for $t$ large enough },$$
and from \eqref{limf2} we obtain
$$ \int_{B_{R}(y)} a^{+}(x)\frac{F(tv)}{t^{p}v^{p}}v^{p} \to +\infty \;   \text{ as } \; \; t\to +\infty.$$
Hence by choosing $v_{0}=tv$, defined in Theorem \ref{theorem jeanjean}, with $t$ large enough we obtain that $I_{\lambda}(v_{0})<0$. Therefore, from Theorem \ref{theorem jeanjean} we deduce that $I_{\lambda}$ admit a Palais-Smale sequence $\{u_{n}\}$ at level $c_{\lambda}$. Thus, by using Lemma \ref{convergence Palais-Smale} we have that all bounded Palais-Smale sequences admit a convergent subsequence. Then from Corollary \ref{corollary montain pass} there exist a sequences $\{\lambda_{n}\}\subset [\frac{1}{2},1]$ and $\{u_{n}\}\subset W_{0}^{1,p}(\Omega)$ such that $\lambda_{n}\to 1$, $I_{\lambda_{n}}(u_{n})= c_{\lambda_{n}}$ and $I^{'}_{\lambda_{n}}(u_{n})= 0$, which means that $u_{n}$ is a weak solution of the following problem  

$$(Q_{\lambda_{n}}) \begin{cases}                                                             
    -\Delta_{p}u= a_{\lambda_{n}}(x)\tilde{f}(u)+b(x)\tilde{g}(u) & \ \ \mbox{ in }\Omega,\\                                                  
    u=0 & \ \ \mbox{ on }   \partial\Omega.                                                                 
    \end{cases}$$
From Lemma \ref{regul}, we have that $u_{n}\in L^{\infty}(\Omega)$. Moreover, from Lemma \ref{positivity} we obtain that $u_{n}$ is nonnegative.

 Beside, from Theorem \ref{a priori estimate} there exist a positive constant $C$ such that $\Vert u_{n}\Vert_{L^{\infty}(\Omega)}<C$. Then by multiplying the first equation of problem $(Q_{\lambda_{n}})$ by $u_{n}$ and by using Proposition \ref{prop RV} and \eqref{g_growth} we get that
 \begin{align*}
 \int_{\Omega}\vert\nabla u_{n} \vert^{p}dx &=\int_{\Omega} a_{\lambda_{n}}(x)\tilde{f}(u_{n})u_{n}dx + \int_{\Omega} b(x)\tilde{g}(u_{n})u_{n}dx\\
&\leq C_{7} \left(\int_{\Omega}\vert a(x) \vert (\vert u_{n} \vert +\vert u\vert^{p+\delta}_{n})dx + \int_{\Omega}\vert b(x) \vert \vert (\vert u_{n}\vert +\vert u\vert ^{p}_{n})dx\right)\\
&\leq C_{8} \left(\Vert a\Vert_{k} + \Vert b\Vert_{k}\right).
 \end{align*}
  Therefore, $\{u_{n}\}$ is  bounded in $W_{0}^{1,p}(\Omega)$. Hence, we can find a subsequence,
still denoted $\{u_{n}\}$, such that $u_{n}$ converges weakly to $u$ in $W_{0}^{1,p}(\Omega)$, strongly in $L^{r}(\Omega)$ with $r\in [1,p^{*})$ and almost everywhere to $u$ in $\Omega$. From Lemma \ref{convergence Palais-Smale} we have $u_{n}$ converges strongly to $u$ in $W_{0}^{1,p}(\Omega)$.    
Therefore,  we deduce directly  that $u\in W_{0}^{1,p}(\Omega)$ is a nonnegative  weak solution of the problem $(Q)$ and from Corollary \ref{corollary montain pass} we have $I_{1}(u)=c_{1}>0$ which mean that $u$ is nontrivial.  

Next, let us show the existence of the second weak solution of the problem $(Q)$. Firstly, we have $I(0)=0$ then $\inf_{v\in B_{\rho}(0)}I(v)\leq 0$, where $\rho$ is defined as above. On the other hand, let $\phi\in C_{0}^{\infty}(\Omega)$ be a positive function  such that $a\phi>0$ and $b\phi>0$. From the definition of $I$, for all $t>0$, we have    
\begin{align*}
I(t\phi) &=t^{p}\left( \frac{1}{p}\int_{\Omega}\vert\nabla \phi \vert^{p}-\int_{\Omega} a(x)\frac{F(t\phi)}{t^{p}\phi^{p}}\phi^{p}- \int_{\Omega} b(x) \frac{G(t\phi)}{t^{p}\phi^{p}}\phi^{p}\right).
\end{align*} 
By using \eqref{limf1} and \eqref{limg1},  we get for $t\to 0^{+}$ that
 $$\int_{\Omega} a(x)\frac{F(t\phi)}{t^{p}\phi^{p}}\phi^{r+1}\to 0,$$
 and 
 $$ \int_{\Omega} b(x) \frac{G(t\phi)}{t^{p}\phi^{p}}\phi^{p}\to +\infty.$$
Hence, $I(t\phi)<0$ for $t$ small enough. Then  $\inf_{v\in B_{\rho}(0)}I(v)< 0$. Moreover, it have been shown that that $I(v)\geq \nu >0$ with $\Vert v\Vert= \rho$. Then there exists a sequence $\{v_{n}\}\subset B_{\rho}(0)$ such that $I(v_{n})$ converges to $\inf_{v\in B_{\rho}(0)}I(v)$. Since  $\{v_{n}\}$ is bounded in $W_{0}^{1,p}(\Omega)$ then there exists a subsequence denoted again $\{v_{n}\}$  such that $v_{n}$ converges to $v$ weakly in $W_{0}^{1,p}(\Omega)$ and converges strongly in $L^{r}(\Omega)$ for some $r\in [1,p^{*})$ respectively.  In addition, since $\Vert v \Vert \leq \liminf_{n\to \infty}\Vert v_{n}\Vert$. Hence, we obtain $I(v)\leq \inf_{v\in B_{\rho}(0)}I(v)$. Therefore, $v$ is a nonnegative and nontrivial local minimum of $I$ in  $B_{\rho}(0)$. 

We conclude that $(Q)$ have one weak solution $u$ at level $c_{1}$, which mean that  $I(u)=c_{1}>0$ and a second weak solution $v\in B_{\rho}(0)$ such that $I(v)<0$. Therefore, the problem $(Q)$  has at least two distinct nonnegative and nontrivial weak solutions in $W_{0}^{1,p}(\Omega)\cap L^{\infty}(\Omega)$. Moreover, by using Lemma \ref{positivity} we conclude the result. 
\end{proof}
\subsubsection{Proof of the second multiplicity result}
\begin{proof}[Proof of Theorem \ref{maintheo2}]
 As it was quoted before every weak solution of the problem $(Q_{\lambda})$ is a critical point of the variational formula  \eqref{variational formula}. By the same arguing used in the proof of Theorem \ref{maintheo1} we have $I_{\lambda}\in C^{1}(W_{0}^{1,p}(\Omega), \mathbb{R})$. Let us show that $I_{\lambda}$ has a geometrical structure for avery $\lambda\in [1/2,1]$. 
 
From Proposition \ref{prop RV} and due to \eqref{limf3}, there exist  $C_{1}$ and $C_{2}$ two positive constants such that $f(t)\leq  C_{1}t^{r} +  C_{2} t^{p-1+\eta}, $ for any $\eta>0$. Hence, by choosing $\eta$ small enough, by using H\"{o}lder's inequality and Sobolev's embedding we obtain that
 \begin{align*}
 \int_{\Omega}  a_{\lambda}(x)F(u) & \leq C_{3}(  \Vert a^{+}\Vert_{L^{k}(\Omega)} \Vert u\Vert^{r+1}+ \Vert a^{+}\Vert_{L^{k}(\Omega)} \Vert u\Vert^{p+\eta}). 
 \end{align*}
 Moreover, from \eqref{g_growth} there exists a positive constant $C_{4}$ such that $g(t)\leq C_{4}(1 + t^{p-1})$. As above we have 
$$\int_{\Omega} b(x) G(u)\leq C_{5}(\Vert b^{+}\Vert_{L^{k}(\Omega)} \Vert u\Vert +  \Vert b^{+}\Vert_{L^{k}(\Omega)} \Vert u\Vert^{p}).$$
From the definition of $I_{\lambda}$ we get that
$$
I_{\lambda}(u)\geq \Vert u \Vert^{p} \left( \frac{1}{p}- C_{3}\Vert a^{+}\Vert_{L^{k}(\Omega)}( \Vert u\Vert^{r+1-p} + \Vert u\Vert^{\eta}) - C_{5}\Vert b^{+}\Vert_{L^{k}(\Omega)}( \Vert u\Vert^{1-p} + 1)\right).    
$$
Therefore, let $u\in \partial B_{\rho}(0)$, by choosing $\rho$ small enough, 
$$ \Vert b^{+}\Vert_{L^{k}(\Omega)}< \frac{1}{3p C_{5}(\rho^{1-p} + 1)},$$
and $$ \Vert a^{+}\Vert_{L^{k}(\Omega)}\leq \frac{1}{3pC_{3}(\rho^{r+1-p}+ \rho^{\eta})}.$$  
We deduce that $I_{\lambda}(u)\geq \nu_{2} >0$ on  $\partial B_{\rho}(0)$, for any $\lambda\in [1/2,1]$. By using the same arguing to proof Theorem \ref{maintheo1} we show the existence of the first nonnegative and nontrivial weak solution $u\in W_{0}^{1,p}(\Omega)\cap L^{\infty}(\Omega)$ at level $c_{1}>0$. 

To show that the second weak nonnegative solution  of $(Q)$ on $B_{\rho}(0)$. Let $\phi\in C_{0}^{\infty}(\Omega)$ be a positive function  such that $a_{\lambda}\phi>0$ and $b\phi>0$. From the definition of $I_{\lambda}$ for all $t>0$ we have   
\begin{align*}
I_{\lambda}(t\phi) &=t^{p}\left( \frac{1}{p}\int_{\Omega}\vert\nabla \phi \vert^{p}- t^{r+1-p}\int_{\Omega} a_{\lambda}(x)\frac{F(t\phi)}{t^{r+1}\phi^{r+1}}\phi^{r+1}- \int_{\Omega} b(x) \frac{G(t\phi)}{t^{p}\phi^{p}}\phi^{p}\right).
\end{align*}
 From \eqref{limf3} and \eqref{limg3}, we obtain that 
 $$t^{r+1-p}\int_{\Omega} a_{\lambda}(x)\frac{F(t\phi)}{t^{r+1}\phi^{r+1}}\phi^{r+1}\to +\infty$$
 and 
 $$ \int_{\Omega} b(x) \frac{G(t\phi)}{t^{p}\phi^{p}}\phi^{p}\to +\infty, \hspace*{0.2cm} \text{ or } \hspace*{0.2cm} \int_{\Omega} b(x) \frac{G(t\phi)}{t^{p}\phi^{p}}\phi^{p}\to c <+\infty, $$
 as $t\to 0^{+}$. It follows that $I_{\lambda}(t\phi)<0$ for $t$ small enough. For the rest of the proof is still the same as the proof of Theorem \ref{maintheo1}. 
\end{proof}
\section{Applications}  
 The aim of this section is to give a class of problems that have the same form of the problem $(Q)$, where the conditions \eqref{RVq}-\eqref{limf1} (-\eqref{limf3}) and \eqref{limg2}-\eqref{limg1} (-\eqref{limg3}) are satisfied and for which Theorems \ref{maintheo1} and \ref{maintheo2} hold. In particular, we give two applications for both Theorem \ref{maintheo1} and Theorem \ref{maintheo2}: the first application concerns the case where $g(0)=0$ and the second one, deals with the case where $g(0)>0$ is satisfied.
\subsection{Application 1: Quasilinear equation with a potential}
Let us consider the following problem: 
$$(P_{1}) 
\begin{cases}
-\Delta_{p}u=a(x)(1+u)^{p-1}(\ln(1+u))^{q} +  b(x)u^{m} & \ \ \mbox{ in }\Omega,\\
u=0 & \ \ \mbox{ on }   \partial\Omega,  
\end{cases}$$
where $q>0$ and $m> 0$. This type of problem have been considered in several works in the case where $p=2$, see for instance, \cite{Cirstea0,Cirstea}. To the best of our knowledge there is no multiplicity result in this type of problem in the case where $a$ and $b$ change of sign and $p>1$. 
 
The following result is an application of Theorem \ref{maintheo1}.
\begin{corollary}
 Under the assumption  \eqref{hypothse}. Let $q>p-1$, $ 0< m<p-1$, and $\Vert b^{+} \Vert_{L^{k}(\Omega)}$ is suitably small.  Then, the problem $(P_{1})$  has at least two nonnegative nontrivial weak solutions in $W_{0}^{1,p}(\Omega)\cap L^{\infty}(\Omega)$.  
\end{corollary}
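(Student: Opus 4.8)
The plan is to recognise $(P_1)$ as the instance of $(Q)$ corresponding to $f(s)=(1+s)^{p-1}\bigl(\ln(1+s)\bigr)^{q}$ and $g(s)=s^{m}$, and then to invoke Theorem \ref{maintheo1}. Both $f$ and $g$ are continuous and nonnegative on $[0,\infty)$, and since $m>0$ we have $g(0)=0$, so the relevant statement is part $(1)$ of Theorem \ref{maintheo1}, which gives two nonnegative nontrivial weak solutions in $W_0^{1,p}(\Omega)\cap L^{\infty}(\Omega)$. The coefficients $a,b$ satisfy \eqref{hypothse} by hypothesis and $\|b^{+}\|_{L^{k}(\Omega)}$ is assumed small, so it remains only to verify the analytic conditions \eqref{RVq}, \eqref{limf2}, \eqref{limf1}, \eqref{limg2} and \eqref{limg1} for this explicit pair.

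For the behaviour of $f$ at infinity I would write
$$\frac{f(\lambda s)}{f(s)}=\left(\frac{1+\lambda s}{1+s}\right)^{p-1}\left(\frac{\ln(1+\lambda s)}{\ln(1+s)}\right)^{q};$$
as $s\to+\infty$ the first factor converges to $\lambda^{p-1}$, and since $\ln(1+\lambda s)=\ln(1+s)+\ln\frac{1+\lambda s}{1+s}$ with the last term bounded, the second factor converges to $1$, which gives \eqref{RVq}. Similarly $f(s)/s^{p-1}=\bigl((1+s)/s\bigr)^{p-1}\bigl(\ln(1+s)\bigr)^{q}\to+\infty$ because $q>0$, which is \eqref{limf2}.

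The hypothesis $q>p-1$ enters precisely in verifying \eqref{limf1}: near $0$ one has $\ln(1+s)=s+o(s)$, hence
$$\frac{f(s)}{s^{p-1}}=\left(\frac{1+s}{s}\right)^{p-1}\bigl(\ln(1+s)\bigr)^{q}\sim s^{\,q-(p-1)},$$
which tends to $0$ as $s\to0^{+}$ exactly because $q-(p-1)>0$. This is the one point requiring attention: dividing $(1+s)^{p-1}$ by $s^{p-1}$ creates the algebraic singularity $s^{-(p-1)}$ near the origin, and it is compensated by the logarithmic factor $\bigl(\ln(1+s)\bigr)^{q}$ only when $q$ exceeds $p-1$; so this balance is where I expect the (still elementary) crux of the verification to sit. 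As for $g(s)=s^{m}$, we have $g(s)/s^{p-1}=s^{\,m-(p-1)}$ with $m-(p-1)<0$ since $0<m<p-1$, so this quantity tends to $0$ as $s\to+\infty$ (giving \eqref{limg2} with $l_{1}=0\in[0,\infty)$) and to $+\infty$ as $s\to0^{+}$ (giving \eqref{limg1}).

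All hypotheses of Theorem \ref{maintheo1} being met, its part $(1)$ applies and yields two distinct nonnegative nontrivial weak solutions of $(P_1)$ in $W_0^{1,p}(\Omega)\cap L^{\infty}(\Omega)$, with the $L^{\infty}$ bound and the nonnegativity supplied by Lemma \ref{regul} and Lemma \ref{positivity} as in the proof of that theorem. This is exactly the assertion of the corollary.
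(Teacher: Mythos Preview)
Your proposal is correct and follows exactly the approach of the paper: identify $(P_1)$ as an instance of $(Q)$ with $f(s)=(1+s)^{p-1}(\ln(1+s))^{q}$ and $g(s)=s^{m}$, verify the structural hypotheses on $f$ and $g$, and apply part~(1) of Theorem~\ref{maintheo1}. The paper's own proof is terser---it simply asserts that the conditions are easily checked---whereas you carry out the verifications explicitly and correctly, including the observation that $q>p-1$ is needed precisely for \eqref{limf1} while \eqref{RVq} and \eqref{limf2} hold for any $q>0$.
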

\begin{proof}
We set $f(s) = (1+s)^{p-1}(\ln(1+s))^{q}$ and $g(s)= s^{m}$. We can easily show  that $f$ satisfies the assumptions \eqref{RVq}, \eqref{limf1} and \eqref{limf2} if $q>p-1$. The function $g$ satisfies \eqref{limg2} and \eqref{limg1} if $m\in (0, p-1)$. Hence by using Theorem \ref{maintheo1} we obtain directly the result.   
\end{proof}
The following result is an application of Theorem \ref{maintheo2}.
\begin{corollary}
 Under the assumptions  \eqref{hypothse}. Let $0<q< p-1$, $0 < m\leq p-1$, $\Vert a^{+} \Vert_{L^{k}(\Omega)}$ and $\Vert b^{+} \Vert_{L^{k}(\Omega)}$ be suitably small.  Then, the problem $(P_{1})$  has at least two nonnegative nontrivial weak solutions in $W_{0}^{1,p}(\Omega)\cap L^{\infty}(\Omega)$. 
\end{corollary}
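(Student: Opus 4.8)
The plan is to verify that the specific functions $f(s) = (1+s)^{p-1}(\ln(1+s))^{q}$ and $g(s) = s^{m}$ fall under the hypotheses of Theorem \ref{maintheo2}, so that its conclusion applies directly. Concretely, I must check the five structural assumptions on $f$ and $g$: that $f$ is regularly varying of index $p-1$ at infinity \eqref{RVq}, that $f$ is $p$-superlinear at infinity \eqref{limf2}, that $f$ satisfies the behavior near zero \eqref{limf3} for some $r\in(0,p-1)$, that $g$ is $p$-superlinear at infinity (here $p$-asymptotically linear) \eqref{limg2}, and that $g$ is $p$-sublinear or asymptotically linear at zero \eqref{limg3}.

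First I would handle $f$. For \eqref{RVq}, compute $f(\lambda s)/f(s) = \bigl((1+\lambda s)/(1+s)\bigr)^{p-1}\bigl(\ln(1+\lambda s)/\ln(1+s)\bigr)^{q}$; as $s\to+\infty$ the first factor tends to $\lambda^{p-1}$ and the second to $1$ (since $\ln(1+\lambda s)=\ln s + \ln\lambda + o(1) \sim \ln s \sim \ln(1+s)$), giving the limit $\lambda^{p-1}$. For \eqref{limf2}, $f(s)/s^{p-1} = \bigl((1+s)/s\bigr)^{p-1}(\ln(1+s))^{q}\to+\infty$ because $(\ln(1+s))^{q}\to+\infty$ when $q>0$. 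For \eqref{limf3}, near $s=0^+$ we have $1+s\to 1$ and $\ln(1+s)\sim s$, so $f(s)\sim s^{q}$; hence $f(s)/s^{q}\to 1 = l_2 \in (0,\infty)$, and the required $r$ is exactly $r=q$, which lies in $(0,p-1)$ precisely under the hypothesis $0<q<p-1$. This is the key place where the bound $q<p-1$ is used. Note that when $q=p-1$ the Remark after Theorem \ref{maintheo2} tells us the conclusion still holds provided $g$ is $p$-sublinear at zero; since here $m\le p-1$ allows $l_3=+\infty$ only when $m<p-1$, one should be slightly careful, but the statement restricts to $0<q<p-1$ for the $a^+$-smallness case, so $r=q\in(0,p-1)$ strictly and no borderline issue arises for $f$.

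Next I would handle $g(s)=s^{m}$. For \eqref{limg2}, $g(s)/s^{p-1}=s^{m-p+1}$; since $m\le p-1$ this tends either to $0$ (if $m<p-1$) or to $1$ (if $m=p-1$) as $s\to+\infty$, so the limit $l_1\in[0,\infty)$ exists. For \eqref{limg3}, $g(s)/s^{p-1}=s^{m-p+1}\to l_3$ as $s\to 0^+$ with $l_3=+\infty$ if $m<p-1$ and $l_3=1\in(0,\infty)$ if $m=p-1$; in both cases $l_3\in(0,\infty]$ as required. One should also note $g$ is continuous and nonnegative on $[0,\infty)$ with $g(0)=0$, so we are in case (1) of Theorem \ref{maintheo2}/\ref{maintheo1} ($g(0)=0$), yielding at least two nonnegative nontrivial weak solutions in $W_0^{1,p}(\Omega)\cap L^\infty(\Omega)$.

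Having verified all hypotheses, I would then simply invoke Theorem \ref{maintheo2}, with the coefficients $a,b$ satisfying \eqref{hypothse} and $\Vert a^{+}\Vert_{L^{k}(\Omega)}$, $\Vert b^{+}\Vert_{L^{k}(\Omega)}$ both suitably small, to conclude that $(P_{1})$ has at least two nonnegative nontrivial weak solutions in $W_{0}^{1,p}(\Omega)\cap L^{\infty}(\Omega)$. The proof is essentially a routine verification of hypotheses; the only mild subtlety — and the step I would be most careful about — is the near-zero asymptotics of $f$, making sure $\ln(1+s)\sim s$ yields $f(s)\sim s^{q}$ so that $r=q$ is forced and the constraint $0<q<p-1$ is exactly what places $r$ in the admissible range $(0,p-1)$.
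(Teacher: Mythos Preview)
Your proposal is correct and follows exactly the paper's approach: set $f(s)=(1+s)^{p-1}(\ln(1+s))^{q}$ and $g(s)=s^{m}$, verify the structural hypotheses \eqref{RVq}, \eqref{limf2}, \eqref{limf3} for $f$ and \eqref{limg2}, \eqref{limg3} for $g$, then invoke Theorem~\ref{maintheo2}. Your write-up is in fact more detailed than the paper's own proof, which merely asserts that these verifications are easy; your explicit computation of the limits (especially $f(s)\sim s^{q}$ near zero, forcing $r=q\in(0,p-1)$) is exactly the content the paper leaves implicit.
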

\begin{proof}
As in the above proof, we set $f(s) = (1+s)^{p-1}(\ln(1+s))^{q}$ and $g(s)= s^{m}$. We can easily show  that $f$ satisfies the assumptions \eqref{RVq}, \eqref{limf1} and \eqref{limf3} if $0<q<p-1$. The function $g$ satisfies \eqref{limg2} and \eqref{limg3} if $m\in (0, p-1]$. Hence by using Theorem \ref{maintheo2} we obtain directly the result.
\end{proof}
\subsection{Application 2: Quasilinear equations with a $p$-Gradient term}
We consider the following problem 
$$(P_{2}) 
\begin{cases}
-\Delta_{p}u= a(x)u^{q}+|\nabla u|^{p}+b(x) & \ \ \mbox{ in }\Omega,\\
u=0 & \ \ \mbox{ on }   \partial\Omega,  
\end{cases}$$
where $q>0$. This type of problem is now a very active field of research and it was investigated in various papers, for instance, \cite{Chaouai,Coster22,Jeanjean2, Jeanjean3} and the references therein. It is worth to mention that until now there is no result concerning the multiplicity result in the case where $a$ changes of sign and for the problem $(P_{2})$ with  $p>1$.  


We observe that the problem $(P_{2})$ has no relation with the problem $(Q)$ due to  the presence of the $p$-gradient term. However, if we perform the following Kazdan-Kramer change of variable,
\begin{equation}\label{Kramer}
v=e^{\frac{ u}{p-1}}-1.    
\end{equation} 
We obtain  the following problem  
$$(Q_{2}) 
\begin{cases}
-\Delta_{p}v= a(x)f(v)+ b(x)g(v) & \ \ \mbox{ in }\Omega,\\
v=0 & \ \ \mbox{ on }   \partial\Omega,  
\end{cases}$$ 
where
\begin{equation}\label{function f}
f(s)= (p-1)^{q-p+1}(1+ s)^{p-1}(\ln(1+s))^{q},
\end{equation} 
and  
\begin{equation}\label{function g}
g(s)=\frac{(1+ s)^{p-1}}{(p-1)^{p-1}},
\end{equation}	
for all $s\geq 0$.\\

In the following lemma we will prove that, if $u$ is a nonnegative weak solution of $(P_{2})$ then $v$ defined by \eqref{Kramer} is a nonnegative weak solution of $(Q_{2})$. In other words, to show the existence of at least two nonnegative weak solution of $(P_{2})$ it is equivalent to show the existence of at least two nonnegative weak solution of $(Q_{2})$. 
\begin{lemma}\label{change of variable}
Under the assumption \eqref{hypothse}, let $b\gneqq 0$, then 
\begin{enumerate}
\item Every weak solution of $(Q_{2})$ is positive.
\item If $v\in W_{0}^{1,p}(\Omega)\cap L^{\infty}(\Omega)$ is a  positive weak solution of $(Q')$ if only if $$u=(p-1)\ln(1+v)\in W_{0}^{1,p}(\Omega)\cap L^{\infty}(\Omega)$$ is a positive weak solution of $(P_{2})$.
\end{enumerate}
\end{lemma}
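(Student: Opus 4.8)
The plan is to prove both assertions by exploiting the strict monotonicity of the Kazdan--Kramer change of variable $\Phi(t)=e^{t/(p-1)}-1$, which is a $C^\infty$-diffeomorphism of $[0,\infty)$ onto itself with inverse $\Phi^{-1}(s)=(p-1)\ln(1+s)$, and by invoking the positivity and regularity lemmas already established.

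\medskip\noindent\textbf{Proof of (1).} First I would note that $(Q_2)$ is exactly the problem $(Q)$ with the specific choices \eqref{function f}--\eqref{function g} of $f$ and $g$. One checks directly that this $f$ satisfies \eqref{RVq}, \eqref{limf2} (and in fact \eqref{limf3} with $r=p-1$, as recorded in the last remark of Section 2), and that $g$ satisfies \eqref{limg2}. Moreover $g(0)=1/(p-1)^{p-1}>0$. Since we assume $b\gneqq 0$ (so $b^{+}\gneqq 0$) together with \eqref{hypothse}, the hypotheses of part (2) of Lemma~\ref{positivity} are met, and hence every weak solution of $(Q_2)$ is positive. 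This is the whole content of (1); the point is simply to verify that $(Q_2)$ falls within the scope of Lemma~\ref{positivity}.

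\medskip\noindent\textbf{Proof of (2).} For the equivalence, suppose first that $v\in W_0^{1,p}(\Omega)\cap L^\infty(\Omega)$ is a positive weak solution of $(Q_2)$, and set $u=(p-1)\ln(1+v)=\Phi^{-1}(v)$. Since $v\in L^\infty(\Omega)$ and $v\ge 0$, the function $t\mapsto (p-1)\ln(1+t)$ is Lipschitz on the range of $v$, so $u\in W_0^{1,p}(\Omega)\cap L^\infty(\Omega)$ with $\nabla u=\dfrac{p-1}{1+v}\nabla v$, i.e. $\nabla v=\dfrac{1}{p-1}e^{u/(p-1)}\nabla u$. Plugging this into the weak formulation of $(Q_2)$ and using the chain rule for $|\nabla v|^{p-2}\nabla v$, one computes that for every test function $\varphi$,
\begin{equation}\label{app2eq1}
\int_\Omega |\nabla v|^{p-2}\nabla v\cdot\nabla\varphi\,dx
=\int_\Omega \Big(\tfrac{e^{u/(p-1)}}{p-1}\Big)^{p-1}|\nabla u|^{p-2}\nabla u\cdot\nabla\varphi\,dx .
\end{equation}
Choosing in $(Q_2)$ the test function $(1+v)^{p-1}\psi$ for $\psi\in C_0^\infty(\Omega)$ (which is admissible since $v\in W_0^{1,p}\cap L^\infty$), expanding $\nabla\big((1+v)^{p-1}\psi\big)$, and using \eqref{app2eq1} together with the explicit forms \eqref{function f}--\eqref{function g} of $f$ and $g$, the terms involving $\nabla v$ recombine to produce precisely the gradient term $|\nabla u|^{p}\psi$ and the source terms collapse to $a(x)u^{q}+b(x)$. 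Hence $u$ is a weak solution of $(P_2)$, and it is positive because $v>0$. The converse is entirely symmetric: starting from a positive $u\in W_0^{1,p}\cap L^\infty$ solving $(P_2)$, set $v=\Phi(u)=e^{u/(p-1)}-1$; since $u$ is bounded, $\Phi$ is Lipschitz on the range of $u$, so $v\in W_0^{1,p}\cap L^\infty$ and $v>0$, and reversing the computation above (now testing $(P_2)$ against $(1+v)^{-(p-1)}\varphi$, which lies in $W_0^{1,p}\cap L^\infty$) shows that $v$ weakly solves $(Q_2)$.

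\medskip\noindent\textbf{Main obstacle.} The routine part is the chain-rule bookkeeping; the point that needs care is the justification that the two changes of variable preserve $W_0^{1,p}(\Omega)$ and that the nonlinear test functions $(1+v)^{\pm(p-1)}\varphi$ are legitimate elements of $W_0^{1,p}(\Omega)\cap L^\infty(\Omega)$. This is where the hypothesis $v\in L^\infty(\Omega)$ (equivalently $u\in L^\infty(\Omega)$) is essential — it makes $\Phi$ and $\Phi^{-1}$ bi-Lipschitz on the relevant range and keeps all intermediate products bounded — and it is supplied by Lemma~\ref{regul}. I expect this density/admissibility verification, rather than the algebraic identity, to be the delicate step.
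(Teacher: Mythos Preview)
Your strategy is exactly the paper's: part~(1) is a direct invocation of Lemma~\ref{positivity}, and part~(2) is the Kazdan--Kramer test-function computation. However, you have the exponents of the auxiliary test functions reversed in both directions. To pass from $(Q_{2})$ to $(P_{2})$ one must test $(Q_{2})$ with $\varphi=(p-1)^{p-1}(1+v)^{-(p-1)}\phi$ (\emph{negative} exponent), as the paper does; with your choice $(1+v)^{p-1}\psi$ the source term $a(x)f(v)(1+v)^{p-1}\psi$ acquires an extra factor $(1+v)^{2(p-1)}$ and does not reduce to $a(x)u^{q}\psi$, and the cross term produced by $\nabla\bigl((1+v)^{p-1}\psi\bigr)$ yields $+|\nabla u|^{p}\psi$ on the left instead of $-|\nabla u|^{p}\psi$, so the equation for $u$ never appears. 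Symmetrically, in the converse direction one should test $(P_{2})$ with $(1+v)^{p-1}\varphi$ (positive exponent), not $(1+v)^{-(p-1)}\varphi$. Once these two signs are swapped, your computation becomes line-for-line the paper's identities \eqref{c1}--\eqref{c3}, and your remarks on why the modified test functions lie in $W_{0}^{1,p}(\Omega)\cap L^{\infty}(\Omega)$ (via $v\in L^{\infty}$) are exactly the admissibility check the paper takes for granted.
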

\begin{proof}
\begin{enumerate}
    \item See Lemma \ref{positivity}. 
    \item Let $v$ be a positive solution of $(Q_{2})$. From the expression of $f$ and $g$ it is seen that $v$ is a solution of 
    \begin{equation}\label{example equation}
    -\Delta_{p}v= a(x) (p-1)^{q-p+1}(1+ v)^{p-1}(\ln(1+ v))^{q}+ b(x)\frac{(1+ v)^{p-1}}{(p-1)^{p-1}}. 
    \end{equation}
    Let $u=(p-1)\ln(1+v)$, then $\nabla u =(p-1) \frac{\nabla v}{1+v}$, since $v$ is positive, we get easly from \eqref{Kramer} that $u\in W_{0}^{1,p}(\Omega)\cap L^{\infty}(\Omega)$ and positive. Let $\phi\in W_{0}^{1,p}(\Omega)$, we set $\varphi= \frac{(p-1)^{p-1}\phi}{(1+v)^{p-1}}$, then $\varphi\in W_{0}^{1,p}(\Omega)$.
    Hence, from \eqref{example equation}, we have. 
    \begin{equation}
    \int_{\Omega}\vert \nabla v \vert^{p-2}\nabla v \nabla \varphi = (p-1)^{q-p+1}\int_{\Omega}a(x) (1+ v)^{p-1}(\ln(1+ v))^{q}\varphi + \int_{\Omega}b(x)\frac{(1+ v)^{p-1}}{(p-1)^{p-1}}\varphi.    
    \end{equation}
    Since $$\nabla\varphi = \frac{(p-1)^{p-1}\nabla\phi}{(1+v)^{p-1}}- \frac{(p-1)^{p}\phi\nabla v}{(1+v)^{p}},$$
and 
$$\nabla v = \frac{\nabla u}{p-1}e^{u/(p-1)}.$$
Then 
\begin{align}\label{c1}
\int_{\Omega}\vert \nabla v \vert^{p-2}\nabla v \nabla \varphi &=   \nonumber \int_{\Omega}\frac{e^{u}}{(p-1)^{p-1}} \vert \nabla u \vert^{p-2}\nabla u\left(\frac{(p-1)^{p-1}\nabla\phi}{(1+v)^{p-1}}- \frac{(p-1)^{p}\phi\nabla v}{(1+v)^{p}}\right)\\ \nonumber
&=\int_{\Omega}\vert \nabla u \vert^{p-2}\nabla u\left(\nabla\phi- \frac{(p-1)\phi\nabla v}{1+v}\right)\\  
&= \int_{\Omega}\vert \nabla u \vert^{p-2}\nabla u\nabla\phi - \int_{\Omega}\vert\nabla u\vert^{p}\phi. 
\end{align}
On the other hand we have
\begin{equation}\label{c2}
(p-1)^{q-p+1}\int_{\Omega}a(x) (1+ v)^{p-1}(\ln(1+ v))^{q}\varphi=\int_{\Omega}a(x)u^{q}\phi,    
\end{equation}
and 
\begin{equation}\label{c3}
\int_{\Omega}b(x)\frac{(1+ v)^{p-1}}{(p-1)^{p-1}}\varphi =\int_{\Omega}b(x)\phi.     
\end{equation}
By combining \eqref{c1}, \eqref{c2} and \eqref{c3} we deduce that 
$$\int_{\Omega}\vert \nabla u \vert^{p-2}\nabla u\nabla\phi=\int_{\Omega}a(x)u^{q}\phi +\int_{\Omega}\vert\nabla u\vert^{p}\phi+ \int_{\Omega}b(x)\phi,$$
which mean that $u$ is a positive weak solution of $(P_{2})$. By the same arguments we prove the reverse statement. 
\end{enumerate}
\end{proof}

Finally, let us give applications of Theorem \ref{maintheo1} and Theorem \ref{maintheo2} in the case where $g(0)>0$ and $b\gneqq 0$.  

The following result is an application of Theorem \ref{maintheo1}.
\begin{corollary} 
 Under the assumptions  \eqref{hypothse}, let $q>p-1$ and $b\gneqq 0$. If $\Vert b \Vert_{L^{k}(\Omega)}$ is suitably small, then, the problem $(Q_{2})$  has at least two positive weak solutions in $W_{0}^{1,p}(\Omega)\cap L^{\infty}(\Omega)$.  
\end{corollary}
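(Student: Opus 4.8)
The plan is to obtain the corollary as an immediate application of the second alternative (the case $g(0)>0$, $b\gneqq 0$) of Theorem~\ref{maintheo1} to the functions $f$ and $g$ defined in \eqref{function f} and \eqref{function g}. The coefficients $a$ and $b$ satisfy \eqref{hypothse} by hypothesis; moreover, since $b\gneqq 0$ we have $b^{-}=0$ and $b=b^{+}\in L^{k}(\Omega)$, so that the assumption ``$\Vert b\Vert_{L^{k}(\Omega)}$ suitably small'' coincides with the assumption ``$\Vert b^{+}\Vert_{L^{k}(\Omega)}$ suitably small'' required in Theorem~\ref{maintheo1}. Hence the only thing left to do is to verify that $f,g\colon[0,\infty)\to[0,\infty)$ are continuous and satisfy \eqref{RVq}, \eqref{limf2}, \eqref{limf1}, \eqref{limg2}, \eqref{limg1}, together with $g(0)>0$; once this is done, Theorem~\ref{maintheo1}(2) applies verbatim and produces two positive weak solutions of $(Q_{2})$ in $W_{0}^{1,p}(\Omega)\cap L^{\infty}(\Omega)$.

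For $g(s)=(1+s)^{p-1}/(p-1)^{p-1}$ I would first note that it is continuous and strictly positive on $[0,\infty)$, and in particular $g(0)=(p-1)^{-(p-1)}>0$, which is exactly why $b\gneqq 0$ must be assumed (second alternative of Theorem~\ref{maintheo1}). Then $g(s)/s^{p-1}\to (p-1)^{-(p-1)}=:l_{1}\in[0,\infty)$ as $s\to+\infty$ yields \eqref{limg2}, and $g(s)/s^{p-1}=(p-1)^{-(p-1)}\bigl((1+s)/s\bigr)^{p-1}\to+\infty$ as $s\to0^{+}$ (because $p-1>0$) yields \eqref{limg1}.

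For $f(s)=(p-1)^{q-p+1}(1+s)^{p-1}\bigl(\ln(1+s)\bigr)^{q}$, which is continuous and nonnegative with $f(0)=0$, I would check the three conditions as follows. For \eqref{RVq}, for each fixed $\lambda>0$ one writes
$$\frac{f(\lambda s)}{f(s)}=\left(\frac{1+\lambda s}{1+s}\right)^{p-1}\left(\frac{\ln(1+\lambda s)}{\ln(1+s)}\right)^{q},$$
and lets $s\to+\infty$: the first factor tends to $\lambda^{p-1}$, while the second tends to $1$ since $\ln(1+\lambda s)=\ln(1+s)+\ln\frac{1+\lambda s}{1+s}$ with $\ln\frac{1+\lambda s}{1+s}\to\ln\lambda$ bounded. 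For \eqref{limf2}, $f(s)/s^{p-1}=(p-1)^{q-p+1}\bigl((1+s)/s\bigr)^{p-1}\bigl(\ln(1+s)\bigr)^{q}\to+\infty$, because $q>p-1>0$ forces $q>0$ and hence $(\ln(1+s))^{q}\to+\infty$. For \eqref{limf1}, using $\ln(1+s)\sim s$ as $s\to0^{+}$,
$$\frac{f(s)}{s^{p-1}}=(p-1)^{q-p+1}(1+s)^{p-1}\left(\frac{\ln(1+s)}{s}\right)^{q}s^{\,q-(p-1)}\longrightarrow 0,$$
since $q-(p-1)>0$; this is the single place where the hypothesis $q>p-1$ is genuinely used. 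With all hypotheses of Theorem~\ref{maintheo1} thus in force, its conclusion in the case $g(0)>0$, $b\gneqq 0$ delivers the two positive weak solutions claimed, completing the proof. I expect no real obstacle here: the argument is a pure verification, the only mildly technical points being the control of the logarithmic factor in the regular-variation limit \eqref{RVq} and the balancing of exponents via $q>p-1$ that makes $f$ $p$-superlinear at the origin.
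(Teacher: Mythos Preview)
Your proposal is correct and follows exactly the approach indicated by the paper, which simply presents this corollary as ``an application of Theorem~\ref{maintheo1}'' without further proof. Your verification of \eqref{RVq}, \eqref{limf2}, \eqref{limf1} for $f$ and \eqref{limg2}, \eqref{limg1}, $g(0)>0$ for $g$ is accurate, and you correctly identify that the hypothesis $q>p-1$ is needed precisely for \eqref{limf1}.
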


The following result is an application of Theorem \ref{maintheo2}.
\begin{corollary}
 Under the assumptions  \eqref{hypothse}, let $0<q\leq p-1$ and $b\gneqq 0$. If  $\Vert a^{+} \Vert_{L^{k}(\Omega)}$ and $\Vert b \Vert_{L^{k}(\Omega)}$ are suitably small,  then the problem $(Q_{2})$  has at least two positive weak solutions in $W_{0}^{1,p}(\Omega)\cap L^{\infty}(\Omega)$.   
\end{corollary}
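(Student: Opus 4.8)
The plan is to exhibit $(Q_2)$ as a concrete instance of $(Q)$ to which Theorem~\ref{maintheo2} applies, so that its two-positive-solutions alternative (the case $g(0)>0$ and $b\gneqq 0$ in Theorem~\ref{maintheo1}) can be invoked verbatim. Concretely, I would take
$$ f(s)= (p-1)^{q-p+1}(1+ s)^{p-1}(\ln(1+s))^{q}, \qquad g(s)=\frac{(1+ s)^{p-1}}{(p-1)^{p-1}}, $$
as in \eqref{function f}--\eqref{function g}, and then check the structural hypotheses of Theorem~\ref{maintheo2} one by one. Assumption \eqref{hypothse} is among the hypotheses; since $b\gneqq 0$ we have $b^{+}=b$ and $b^{-}=0\in L^{\infty}(\Omega)$, so smallness of $\|b^{+}\|_{L^{k}(\Omega)}$ is exactly smallness of $\|b\|_{L^{k}(\Omega)}$, while smallness of $\|a^{+}\|_{L^{k}(\Omega)}$ is assumed outright.

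Next I would verify the conditions on $f$ by elementary asymptotics. For \eqref{RVq}, $f(\lambda s)/f(s)=\big((1+\lambda s)/(1+s)\big)^{p-1}\big(\ln(1+\lambda s)/\ln(1+s)\big)^{q}\to\lambda^{p-1}$ as $s\to+\infty$. For \eqref{limf2}, $f(s)/s^{p-1}=(p-1)^{q-p+1}\big((1+s)/s\big)^{p-1}(\ln(1+s))^{q}\to+\infty$, since $q>0$. For \eqref{limf3}, using $\ln(1+s)\sim s$ as $s\to0^{+}$ one gets $f(s)/s^{q}\to (p-1)^{q-p+1}\in(0,\infty)$, so \eqref{limf3} holds with $r=q$ and $l_{2}=(p-1)^{q-p+1}$. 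When $0<q<p-1$ this lies in the range $r\in(0,p-1)$ required by Theorem~\ref{maintheo2}; when $q=p-1$ one is at the borderline $r=p-1$, and here I would invoke the remark following Theorem~\ref{maintheo2}, which asserts that the theorem remains valid for $r=p-1$ provided $g$ is $p$-sublinear at zero — verified just below.

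Finally, for $g$: the ratio $g(s)/s^{p-1}=(1+s)^{p-1}/\big((p-1)^{p-1}s^{p-1}\big)$ tends to $1/(p-1)^{p-1}=:l_{1}\in[0,\infty)$ as $s\to+\infty$, giving \eqref{limg2}, and tends to $+\infty$ as $s\to0^{+}$ because $(1+s)^{p-1}\to1$ while $s^{p-1}\to0$, giving \eqref{limg1} (equivalently \eqref{limg3} with $l_{3}=+\infty$); moreover $g(0)=1/(p-1)^{p-1}>0$. With every hypothesis of Theorem~\ref{maintheo2} in place and $g(0)>0$, $b\gneqq0$, the second alternative of Theorem~\ref{maintheo1} (reached through Theorem~\ref{maintheo2}) yields two distinct positive weak solutions of $(Q_2)$ in $W_{0}^{1,p}(\Omega)\cap L^{\infty}(\Omega)$, which is the claim. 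I expect the only genuinely delicate point to be the endpoint $q=p-1$, where one must rely on the quoted remark rather than on the statement of Theorem~\ref{maintheo2} as written; all remaining steps are routine limit computations.
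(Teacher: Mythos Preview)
Your proposal is correct and follows exactly the route the paper intends: the paper states this corollary as a direct application of Theorem~\ref{maintheo2} (together with the remark covering the borderline $r=p-1$), and your write-up simply spells out the verification of \eqref{RVq}, \eqref{limf2}, \eqref{limf3}, \eqref{limg2}, \eqref{limg1} for the specific $f$ and $g$ of \eqref{function f}--\eqref{function g}, plus $g(0)>0$ and $b^{-}=0$. Nothing further is needed.
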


\end{document}